\def\titlerunning#1{\gdef\titrun{#1}}
\def\author#1{\gdef\autrun{\def\and{\unskip, }#1}\gdef\@author{#1}}
\def\address#1{{\def\and{\\\hspace*{18pt}}\renewcommand{\thefootnote}{}%
\footnote {#1}}%
\markboth{\autrun}{\titrun}}
\def\email#1{e-mail: #1}
\def\subjclass#1{{\renewcommand{\thefootnote}{}%
\footnote{\emph{Mathematics Subject Classification (2010):} #1}}}
\newtheorem{theorem}{Theorem}[section]
\newtheorem{corollary}[theorem]{Corollary}
\newtheorem{lemma}[theorem]{Lemma}
\newtheorem{proposition}[theorem]{Proposition}
\newtheorem{hypothesis}[theorem]{Hypothesis}
\newtheorem*{thmA}{Theorem A}
\newtheorem*{thmB}{Theorem B}
\newtheorem*{thmC}{Theorem C}
\newtheorem*{thmD}{Theorem D}
\theoremstyle{definition}
\newtheorem{definition}[theorem]{Definition}
\newtheorem{remark}[theorem]{Remark}
\numberwithin{equation}{section}
\newcommand{\mr}[1]{\mathrm{#1}}
\newcommand{\mf}[1]{\mathfrak{#1}}
\newcommand{\mc}[1]{\mathcal{#1}}
\newcommand{\mb}[1]{\mathbb{#1}}
\newcommand{\mbf}[1]{\mathbf{#1}}
\newcommand{\Z}{\mb{Z}}
\newcommand{\Q}{\mb{Q}}
\newcommand{\zp}{\mb{Z}_p}
\newcommand{\qp}{\mb{Q}_p}
\newcommand{\F}{\mb{F}}
\newcommand{\RGa}{\mathbf{R}\Gamma}
\newcommand{\La}{\Lambda}
\newcommand{\RHom}{\mbf{R}\!\Hom}
\newcommand{\Iw}{\mr{Iw}}
\newcommand{\ab}{\mr{\mr{ab}}}
\newcommand{\rQ}{\mr{Q}}
\newcommand{\rF}{\mr{F}}
\newcommand{\rC}{\mr{C}}
\newcommand{\rH}{{\rm H}}
\newcommand{\rE}{{\rm E}}
\newcommand{\rT}{\mr{T}}
\newcommand{\ps}[1]{[\![ #1 ]\!]}
\newcommand{\cotimes}[1]{\,\hat{\otimes}_{#1} \,}
\newcommand{\tf}{\mathrm{tf}}
\DeclareMathOperator{\Hom}{Hom} 
\DeclareMathOperator{\Gal}{Gal}
\DeclareMathOperator{\Res}{Res}
\DeclareMathOperator{\coker}{coker} 
\DeclareMathOperator{\Cone}{Cone}
\DeclareMathOperator{\im}{im}
\DeclareMathOperator{\Inf}{Inf}
\DeclareMathOperator{\Ext}{Ext}
\DeclareMathOperator{\rank}{rank}
\DeclareMathOperator{\Fitt}{Fitt}
\DeclareMathOperator{\Tor}{Tor}
\DeclareMathOperator{\Tra}{Tra}
\begin{document}


\baselineskip=17pt


\titlerunning{Exterior Powers in Iwasawa Theory}

\title{Exterior Powers in Iwasawa Theory}

\author{F. M. Bleher \and T. Chinburg \and R. Greenberg \and M. Kakde \and R. Sharifi \and M. J. Taylor}

\date{}

\maketitle

\address{F. M. Bleher:  Dept. of Mathematics, University of Iowa, Iowa City, IA 52242, USA; \email{frauke-bleher@uiowa.edu}
\and
T. Chinburg: Dept. of Mathematics, University of Pennsylvania, Philadelphia, PA 19104, USA; \email{ted@math.upenn.edu}
\and
R. Greenberg: Dept. of Mathematics, University of Washington, Seattle, WA 98195, USA; \email{greenber@math.washington.edu}
\and
M. Kakde: Dept. of Mathematics, Indian Institute of Science, Bangalore 560012, India; \email{maheshkakde@iisc.ac.in}
\and
R. Sharifi: Dept. of Mathematics, University of California, Los Angeles, Los Angeles, CA 90095, USA; \email{sharifi@math.ucla.edu}
\and
M. J. Taylor: School of Mathematics, Merton College, University of Oxford, Oxford OX1 4JD, UK; 
\email{martin.taylor@merton.ox.ac.uk}
}

\subjclass{Primary 11R23; Secondary 11R34}


\begin{abstract}
	The Iwasawa theory of CM fields has traditionally concerned Iwasawa modules that are  abelian pro-$p$ Galois groups 
	with ramification allowed at a maximal set of primes over $p$ such that the module is torsion.  A main conjecture for such an 
	Iwasawa module describes its codimension one support in terms of a $p$-adic $L$-function attached to the primes of ramification. 
	In this paper, we study more general and potentially much smaller  modules that are quotients of exterior powers of 
	Iwasawa modules with ramification at a set of primes over $p$ by sums of exterior powers of inertia subgroups.  We show that the 
	higher codimension support of such quotients can be measured by finite collections of characteristic ideals of classical Iwasawa 
	modules, hence by $p$-adic $L$-functions under the relevant CM main conjectures.
\end{abstract}

\section{Introduction}
\label{s:intro}

Iwasawa theory studies the growth of Selmer groups in towers of number fields.  In the commutative setting, these towers have Galois groups isomorphic to $\zp^r$ for some $r \ge 1$, and their Iwasawa algebras are isomorphic to a power series ring in $r$ variables over $\zp$.  The Selmer groups are typically attached to Galois-stable lattices in $p$-adic Galois representations that come from geometry.  The local conditions defining the Selmer groups are chosen so that the Pontryagin dual of a limit up the tower is a finitely generated torsion module over the Iwasawa algebra.  For example, when the Galois representation is the trivial representation, these dual Selmer groups are abelian pro-$p$ Galois groups with restricted ramification.  In many instances, one can construct a power series that gives rise to a $p$-adic $L$-function attached to the lattice and the Selmer conditions.  In what is known as a main conjecture, this power series is conjectured to generate the characteristic ideal of the Iwasawa module.

In this paper, we develop a method to study the support of Iwasawa modules in arbitrary codimension, focusing specifically on the Iwasawa theory of CM fields for one-dimensional Galois representations.  To study the codimension $n$ support of a finitely generated Iwasawa module, we use the $n$th Chern class of its maximal codimension $n$ submodule.  This Chern class, as defined in \cite{BCGKPST}, is the sum of the lengths of its localizations at the prime ideals of codimension $n$.  For instance, the first Chern class of a finitely generated torsion Iwasawa module is the divisor defining its characteristic ideal.

A CM main conjecture describes the first Chern class of an Iwasawa module unramified outside of a ($p$-adic) CM type of primes over $p$ in terms of a Katz $p$-adic $L$-function.  Recall that a CM type is a set of one from each pair of complex conjugate primes over $p$ in a CM field, supposing that the primes over $p$ split from the maximal totally real subfield. We aim to construct an Iwasawa module which has support in higher codimension related to a tuple of $p$-adic $L$-functions for distinct CM types. For this, we take the quotient of the top exterior power of a $p$-ramified Iwasawa module by a sum of top exterior powers of composites of inertia groups at certain of the primes.   The main results of this paper relate higher Chern classes of these \emph{exterior quotients} to the first Chern classes of Iwasawa modules unramified outside of a CM type, and therefore to Katz $p$-adic $L$-functions if the relevant CM main conjectures hold.  

The idea of taking top exterior powers occurs frequently in number theory, as characteristic ideals arise as determinants.  The quotient of the top exterior power of a finitely generated free module by the top exterior power of a free submodule of full rank has first Chern class equal to that of the quotient of the two free modules.  For this reason, exterior powers figure heavily in equivariant formulations of main conjectures using determinants, as in the work of Fukaya and Kato \cite{FK}.  They also appear prominently in Stark's conjectures, in which one considers the top exterior powers of isotypic components of unit groups in order to arrive at regulators which are related to the special values of derivatives of Artin $L$-series.  Our work has the seemingly unique aspect that we take a quotient of a top exterior power of an Iwasawa module by a sum of two or more top exterior powers of submodules.

Let us briefly describe our main theorems, as we shall state after introducting the necessary framework.  Theorem A relates the codimension $2$ support of an exterior quotient to a pair of first Chern classes corresponding to arbitrary distinct choices of CM types. In Theorem B, by localizing away from bad primes, we obtain an isomorphism between an exterior quotient and the quotient of an Iwasawa algebra by the ideal generated by a tuple of first Chern classes.  Theorem C involves two CM types differing in a  degree one prime, in which case our quotient is the classical Iwasawa module unramified outside the intersection of the two CM types.  We relate the sum of second Chern classes of this module and another for the complex conjugate set to the ideal generated by the two first Chern classes of the CM types.  Finally, in Theorem D, we describe a quotient of second exterior powers as a Galois group with restricted ramification.

We turn to details of our work, starting with the formal definition of our key invariant.  An index of notations is given in Section \ref{s:NoteIndex} at the end of the paper.  For a finitely generated Iwasawa module $M$, we let $t_n(M)$ denote the $n$th Chern class of the maximal 
submodule $\rT_n(M)$ of $M$ supported in codimension at least $n$. 
That is, $t_n(M)$ is the formal sum
$$
	t_n(M) = \sum_{\mc{P}} \mr{length}(\rT_n(M)_\mc{P})[\mc{P}]
$$
over height $n$ prime ideals $\mc{P}$ in the Iwasawa algebra.  In the case that $M = \rT_n(M)$, this is the $n$th Chern class $c_n(M)$ of $M$
considered in \cite{BCGKPST}.  The invariant $t_1(M)$ is naturally identified with the characteristic ideal of the torsion submodule of $M$, matching the classical definition.
Note that $t_n$ is not additive on arbitrary exact sequences of finitely generated modules, but it is on exact sequences of modules supported in codimension at least $n$.

Now, let $p$ be an odd prime, and let $E$ be a CM field of degree $2d$.  We suppose that each prime over $p$ in the maximal totally real subfield $E^+$ of $E$ splits in $E$. Let $F$ be a finite abelian extension of $E$ of degree prime to $p$ containing the $p$th roots of unity. Let $K$ be the compositum of $F$ with all of the $\zp$-extensions of $E$, and let $\Gamma=\Gal(K/F)$ and $\mc{G}=\Gal(K/E)$.  Let $\Sigma$ be a subset of the set of primes of $E$ over $p$.  
We consider the $\Sigma$-ramified Iwasawa module $X_{\Sigma}$ that is the Galois group over $K$ of the maximal unramified outside of $\Sigma$ abelian pro-$p$ extension of $K$.
 Then $\Gamma$ is isomorphic to $\zp^r$ for some integer $r\ge d+1$, where $r = d+1$ if the Leopoldt conjecture is true.  
Let 
$$
	\psi \colon \Delta = \Gal(F/E) \to W^{\times}
$$ 
be a $p$-adic character, where $W$ denotes the Witt vectors of an algebraic closure $\overline{\mathbb{F}}_p$ of $\mathbb{F}_p$.  (In our main results, $W$ may be replaced by the ring generated by the values of $\psi$.)
Let $\Lambda = W\ps{\Gamma}$ be the completed group ring of $\Gamma$ over $W$, which is a power series ring in $r$ variables over $W$.  
We are interested in the finitely generated $\Lambda$-module $X_{\Sigma}^{\psi} = X_{\Sigma} \cotimes{\zp[\Delta]} W$ for the map $\zp[\Delta] \to W$ induced by $\psi$, which is to say the $\psi$-isotypical component of $X_{\Sigma}$, or more precisely of its completed tensor product with $W$.  

Let $S_f$ be the set of all primes over $p$ in $E$.
A ($p$-adic) CM type $\Sigma$ is a subset of $S_f$ which contains exactly one prime of each conjugate pair.  
One has a power series $\mr{L}_{\Sigma,\psi} \in \Lambda$ that gives rise to a certain Katz $p$-adic $L$-function
attached to $\Sigma$ and $\psi$.  Hida and Tilouine \cite{HT} showed that $X_{\Sigma}^{\psi}$ is $\Lambda$-torsion and stated an Iwasawa main conjecture that says that the characteristic ideal of $X_{\Sigma}^{\psi}$ is generated by $\mr{L}_{\Sigma,\psi}$.  They proved an anticyclotomic variant of this conjecture under certain hypotheses.  Work of Hsieh \cite{Hsieh} shows that the characteristic ideal of $X_{\Sigma}^{\psi}$ is divisible by $\mr{L}_{\Sigma,\psi}$ under certain assumptions.  In particular, this relates the codimension one support of the algebraically defined module $X_{\Sigma}^{\psi}$ to that of the analytically defined module $\Lambda/(\mr{L}_{\Sigma,\psi})$.
We will use $\mc{L}_{\Sigma,\psi}$ to denote a choice of generator of the characteristic ideal of $X_{\Sigma}^{\psi}$.  The CM main conjecture for $\Sigma$ is then the statement that $(\mc{L}_{\Sigma,\psi}) = (\mr{L}_{\Sigma,\psi})$.

Fix a set $\mc{S}$ of primes over $p$ properly containing a CM type. 
Let us write $\mc{S}$ as a union of two distinct CM types $\mc{S}_1$ and $\mc{S}_2$.
Let $\theta$ be a greatest common divisor in $\Lambda$ of 
$\mc{L}_{\mc{S}_1,\psi}$ and $\mc{L}_{{\mc{S}_2},\psi}$. 
For a discussion of a possible construction of examples in which $\theta$ is a non-unit, see Remark \ref{rem:divisorrem}.
The first Chern class of the quotient
$\Lambda/(\mc{L}_{\mc{S}_1,\psi},\mc{L}_{{\mc{S}_2},\psi})$
is the ideal $\Lambda \theta$.  Our interest in this paper is the more subtle information 
contained in the pseudo-null module 
\begin{equation}
\label{eq:bigenchilada}
\rT_2\left ( \frac{\Lambda}{(\mc{L}_{\mc{S}_1,\psi},\mc{L}_{{\mc{S}_2},\psi})}\right )  = \frac{\Lambda\theta}{(\mc{L}_{\mc{S}_1,\psi},\mc{L}_{{\mc{S}_2},\psi})} \cong   \frac{\Lambda}{(\mc{L}_{\mc{S}_1,\psi}/
\theta,\mc{L}_{{\mc{S}_2},\psi}/\theta)}.
\end{equation}

We aim to relate the codimension two support of the module (\ref{eq:bigenchilada}) to 
that of some naturally defined algebraic modules, as was done in \cite{BCGKPST} for imaginary
quadratic fields $E$ under the assumption of coprimality of $\mc{L}_{\mc{S}_1,\psi}$ and $\mc{L}_{{\mc{S}_2},\psi}$.  
This requires overcoming a serious obstruction for $E$ an arbitrary CM field.  Namely,
 the $\Lambda$-rank $\ell$ of $X_{\mc{S}}^{\psi}$ may now be larger than $1$: that is, we show in Lemma \ref{lem:sweepup} that
$$
	\ell = \sum_{v \in \mc{S} - \Sigma} [E_v:\mathbb{Q}_p],
$$ 
where $\Sigma$ is any CM type contained in $\mc{S}$.
If $\ell > 1$, then the first Chern class of $X_{\mc{S}_i}^{\psi}$ for $i \in \{1,2\}$ is insufficient to identify, up to errors supported in codimension greater than $2$, the $\Lambda$-submodule
$I_{\mc{T}_i}^{\psi}$ of $X_{\mc{S}}^{\psi}$ generated by inertia groups at primes over $\mc{T}_i = \mc{S} - \mc{S}_i$.

We make the simple but key observation that the $\ell$th exterior powers 
of $X_{\mc{S}}^{\psi}$ and the $I_{\mc{T}_i}^{\psi}$ are indeed rank one $\Lambda$-modules.  We therefore replace the quotient
$X_{\mc{S}}^{\psi}/(I_{\mc{T}_1}^{\psi} + I_{\mc{T}_2}^{\psi}) \cong X_{\mc{S}_1 \cap \mc{S}_2}^{\psi}$
found in the imaginary quadratic setting by the exterior quotient
\begin{equation} \label{eq:extquot}
    \frac{(\bigwedge^\ell X_{\mc{S}}^{\psi})_{\tf}}{(\bigwedge^\ell I_{\mc{T}_1}^{\psi})_{\tf}  
    + (\bigwedge^\ell I_{\mc{T}_2}^{\psi})_{\tf}}
\end{equation}
where a subscript ``$\mathrm{tf}$'' denotes maximal $\Lambda$-torsion-free quotient. Here, 
we view each $(\bigwedge^\ell I_{\mc{T}_i}^{\psi})_{\tf}$ as a submodule of $(\bigwedge^\ell X_{\mc{S}}^{\psi})_{\tf}$ and
take their sum within the latter group.
We will compare the second Chern classes of the maximal pseudo-null submodules 
of (\ref{eq:extquot}) and of $\Lambda/(\mc{L}_{\mc{S}_1,\psi},\mc{L}_{{\mc{S}_2},\psi})$.

For a compact $\Lambda$-module $A$, 
we let $A(1)$ be the Tate twist of $A$ by the cyclotomic character of $\Gamma$.
Let $A^{\iota}$ denote the $\Lambda$-module which as a topological $\zp$-module is $A$ 
and on which $\gamma \in \Gamma$ now acts by $\gamma^{-1}$. For $A$ finitely generated, we define 
$$
	\rE^i(A) = \Ext^i_{\Lambda}(A^{\iota},\Lambda),
$$ 
we set $A_{\tf} = A/\rT_1(A)$, we let $\bigwedge^{\ell} A$ denote the $\ell$th exterior power of $A$ over $\Lambda$, and we let 
$\Fitt(A)$ denote the $0$th Fitting ideal of $A$.

Write $\mc{S}^c$ for the set of primes over $p$ not in $\mc{S}$.
Then $X_{\mc{S}^c}^{\omega\psi^{-1}}$ is a torsion $\Lambda$-module because $\mc{S}^c$
is contained in a CM type of primes over $p$.      
To simplify statements of our main theorems as stated in the body of this paper, we suppose in this introduction that $\psi$ (resp. $\omega\psi^{-1}$) is nontrivial on all decomposition groups in $\Delta$ at primes $\mf{p}\in \overline{\mc{S}}$ (resp. $\mf{p}\in \mc{S}$),
for $\overline{\mc{S}}$ the complex conjugate set to $\mc{S}$.
Under this assumption, each $I_{\mc{T}_i}^{\psi}$ is $\Lambda$-free, so each $\bigwedge^{\ell} I_{\mc{T}_i}^{\psi} = (\bigwedge^\ell I_{\mc{T}_i}^{\psi})_{\tf}$ is free of rank one. (The latter comment applies to the theorems in this introduction, so we omit the ``$\tf$'' notation on such groups in them.)

\begin{thmA} \label{thm:simple}
    For a union $\mc{S}$ of two distinct CM types $\mc{S}_1$ and $\mc{S}_2$ and its complement $\mc{S}^c$, 
    we have an equality of second Chern classes
    \begin{equation}
    \label{eq:c2formulasimple}
    t_2\left(\frac{\Lambda }{(\mc{L}_{\mc{S}_1,\psi},\mc{L}_{\mc{S}_2,\psi})} \right)  = 
    t_2\left(\frac{(\bigwedge^\ell X_{\mc{S}}^{\psi})_{\tf}}{\bigwedge^\ell I_{\mc{T}_1}^{\psi}  
    + \bigwedge^\ell I_{\mc{T}_2}^{\psi} }  \right ) 
    + t_2\left(\frac{\theta}{\theta_0} \cdot \frac{\Lambda}{\Fitt(\rE^2(X_{\mc{S}^c}^{\omega\psi^{-1}})(1))}\right)
    \end{equation}
    where $\ell = \rank_{\Lambda} X_{\mc{S}}^{\psi}$, where $\theta$ is a gcd of the characteristic elements $\mc{L}_{\mc{S}_i,\psi}$ of
    $X_{\mc{S}_i}^{\psi}$ for $i \in \{1,2\}$, and where $\theta_0$ is a generator of  
    $t_1(\bigwedge^{\ell} X_{\mc{S}}^{\psi})$.  
\end{thmA}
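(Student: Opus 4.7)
The plan is to reduce the theorem to an algebraic calculation in the rank-one torsion-free module $M := (\bigwedge^{\ell} X_{\mc{S}}^{\psi})_{\tf}$, match both sides of (\ref{eq:c2formulasimple}) codimension-one prime by codimension-one prime, and then identify the codimension-two defect via Iwasawa-theoretic duality. Starting from the short exact sequences
$$0 \to I_{\mc{T}_i}^{\psi} \to X_{\mc{S}}^{\psi} \to X_{\mc{S}_i}^{\psi} \to 0, \qquad i \in \{1,2\},$$
I would use that, by the nontriviality hypothesis on $\psi$, each $I_{\mc{T}_i}^{\psi}$ is $\Lambda$-free of rank $\ell$ and $X_{\mc{S}_i}^{\psi}$ is $\Lambda$-torsion with characteristic element $\mc{L}_{\mc{S}_i,\psi}$. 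Taking top exterior powers and projecting to $M$, together with the standard identity giving the characteristic ideal of $\bigwedge^{\ell} B / \bigwedge^{\ell} A$ for a rank-preserving injection with torsion cokernel, produces cyclic submodules $\Lambda\alpha_i \subseteq M$ with $\mr{char}(M/\Lambda\alpha_i) = (\mc{L}_{\mc{S}_i,\psi}/\theta_0)$. In particular $\theta_0 \mid \mc{L}_{\mc{S}_i,\psi}$ for both $i$, so $\theta_0 \mid \theta$ and $\theta/\theta_0 \in \Lambda$.

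Next, I would compare the exterior quotient $M/(\Lambda\alpha_1 + \Lambda\alpha_2)$ with the model module $\Lambda/(\mc{L}_{\mc{S}_1,\psi}/\theta_0,\ \mc{L}_{\mc{S}_2,\psi}/\theta_0)$. At every height-one prime $\mc{P}$, regularity of $\Lambda$ forces $M_{\mc{P}}$ to be free of rank one, and the first step identifies $\alpha_i$ with $\mc{L}_{\mc{S}_i,\psi}/\theta_0$ up to units, so the two modules agree after localization at $\mc{P}$. The discrepancy is therefore pseudo-null and comes from the reflexivization defect $M^{\ast\ast}/M$ (modulo codimension $\ge 3$ data). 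I would invoke Jannsen-type Poitou--Tate duality in Iwasawa theory to rewrite this defect in terms of a Tate twist of the complementary Iwasawa module $X_{\mc{S}^c}^{\omega\psi^{-1}}$, producing the Fitting-ideal factor $\Fitt(\rE^2(X_{\mc{S}^c}^{\omega\psi^{-1}})(1))$. The further factor $\theta/\theta_0$ enters through the algebraic identity $\rT_2(\Lambda/(a,b)) \cong \Lambda/(a/\gcd(a,b),\, b/\gcd(a,b))$ applied with $a = \mc{L}_{\mc{S}_1,\psi}$ and $b = \mc{L}_{\mc{S}_2,\psi}$, which rescales the $\theta_0$-based ideal on the exterior-quotient side into the $\theta$-based ideal matching $\rT_2$ on the left-hand side.

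Finally, I would assemble (\ref{eq:c2formulasimple}) via the additivity of $t_2$ on short exact sequences of modules supported in codimension $\ge 2$. The main obstacle is the Poitou--Tate identification in the second step: pinning down the reflexivization defect of $M$, or equivalently the codimension-two failure of $\bigwedge^{\ell} X_{\mc{S}}^{\psi}$ to split off its inertia subobjects, precisely in terms of $\Fitt(\rE^2(X_{\mc{S}^c}^{\omega\psi^{-1}})(1))$. This will require the four-term $\Ext$-sequence coming from $\RHom_{\Lambda}(-,\Lambda)$ applied to a two-term complex resolution of $X_{\mc{S}}^{\psi}$, Jannsen's spectral sequence relating $\rE^i(X_{\mc{S}}^{\psi})$ to Galois-cohomological data of $X_{\mc{S}^c}^{\omega\psi^{-1}}$, and careful bookkeeping of all codimension-one unit ambiguities to pin the scaling factor on the nose as $\theta/\theta_0$ rather than anything cruder.
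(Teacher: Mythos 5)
Your outline tracks the paper's actual argument quite closely: the paper also starts from the injections $I_{\mc{T}_i}^{\psi} \hookrightarrow X_{\mc{S}}^{\psi}$ with cokernels $X_{\mc{S}_i}^{\psi}$, passes to top exterior powers, embeds into the reflexive hull, identifies the reflexivization defect via the Nekov\'a\v{r}-type duality spectral sequence (Proposition~\ref{thm:ss}, giving the four-term sequence $0 \to \rE^1(Y_{\Sigma^c})(1) \to Y_{\Sigma} \to Y_{\Sigma}^{**} \to \rE^2(Y_{\Sigma^c})(1) \to 0$ of Proposition~\ref{prop:iwseq}), and assembles the answer with the gcd identity \eqref{eq:bigenchilada} and additivity of $c_2$ on pseudo-null short exact sequences. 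The main structural difference is that the paper factors the commutative-algebra part out into the abstract Lemma~\ref{lem:torsion!} and Corollary~\ref{cor:pseudo}, and works at each codimension-two prime $\mf{q}$ of $\Lambda$ after localizing, whereas you describe the argument globally; this is why the paper's body statement (Theorem~\ref{thm:unconditional}) carries an error term modulo $\mc{Z}_{\mc{S},\psi}$ that you never have to see because the assumption on $\psi$ kills the $\mc{K}$-contributions.

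Two points worth making precise as you fill in details. First, the $\theta_0$ that makes the step $c_1\bigl(M/\Lambda\alpha_i\bigr) = (\mc{L}_{\mc{S}_i,\psi}/\theta_0)$ work is a generator of $t_1(X_{\mc{S}}^{\psi})$ (equivalently $t_1(Y_{\mc{S}}^{\psi})$ under the $\psi$-assumption), which is what Theorem~\ref{thm:unconditional} and Lemma~\ref{lem:torsion!} use; this is generally \emph{not} the same as $t_1(\bigwedge^{\ell}X_{\mc{S}}^{\psi})$ appearing in the introduction's statement of Theorem~A when $\ell \ge 2$ and $X_{\mc{S}}^{\psi}$ has torsion, because the torsion of $\bigwedge^{\ell}(T\oplus F)$ involves $\bigwedge^{i}T\otimes\bigwedge^{\ell-i}F$ for $i\ge 1$ and not just $T$. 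Your computation with $\bigwedge^{\ell}$ implicitly uses the correct $\theta_0$. Second, the duality in the paper is phrased in terms of $Y_{\mc{S}} = \rH^2_{\mc{S},\Iw}(K,\zp(1))$ rather than $X_{\mc{S}}$, and Lemma~\ref{lem:needlater} (the sequence $0 \to X^\flat_{\Sigma} \to Y_{\Sigma} \to \mc{K}_{\Sigma^c,0} \to 0$) is used to trade one for the other; you will need this to justify replacing $Y_{\mc{S}^c}^{\omega\psi^{-1}}$ by $X_{\mc{S}^c}^{\omega\psi^{-1}}$ in the Fitting-ideal factor, and again the $\psi$-nontriviality assumption is what makes the $\mc{K}$-term drop out cleanly. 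With those two points attended to, your plan and the paper's proof are essentially identical.
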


\begin{remark}
	In Theorems \ref{thm:unconditional} and \ref{thm:unconditionalBIG}, we generalize Theorem A to treat 
	$n$-tuples of CM types, without any assumption on $\psi$.
\end{remark}

The $\Lambda$-module $X_{\mc{S}^c}^{\omega\psi^{-1}}$ is a quotient of $X_{\Sigma}^{\omega\psi^{-1}}$ for each of the $2^{\ell}$ 
CM types $\Sigma$ containing $\mc{S}^c$, each of which has first Chern class 
$(\mc{L}_{\Sigma,\omega\psi^{-1}})$, and these lack obvious dependencies in general.  When $\ell > 1$, we therefore suspect that the $\Lambda$-module $X_{\mc{S}^c}^{\omega\psi^{-1}}$  frequently has annihilator of height greater than $2$, in which case the last term in \eqref{eq:c2formulasimple} vanishes.   (Recall that for a Cohen-Macaulay ring $R$, the height of the annihilator of a finitely generated $R$-module $M$ is at most the smallest
$i$ such that $\Ext^i(M,R)$ is nonzero \cite[Theorem 17.4]{matsumura}.)
In fact, the proof of Theorem A and a spectral sequence argument lead to the following.

\begin{thmB}
\label{thm:simplen}
	Let $\mc{S}$ be  a subset of $S_f$ that properly contains a CM type.  Let $\mf{q}$ be a prime of $\Lambda$ not in the support of
	$(X_{\mc{S}^c}^{\omega\psi^{-1}})^{\iota}(1)$.
	Then the following hold.
	\begin{enumerate}
		\item[(i)] The $\Lambda_{\mf{q}}$-module $X_{\mc{S},\mf{q}}^{\psi}$ is free of rank $\ell$.  
		In particular, $(\bigwedge^\ell X_{\mc{S},\mf{q}}^{\psi})_{\tf} = \bigwedge^\ell X_{\mc{S},\mf{q}}^{\psi}$.
		\item[(ii)] Let $\mc{S}_1, \ldots, \mc{S}_n$ be distinct CM types contained in $\mc{S}$ for some $n \ge 1$.  
		Then 
		$$
			\frac{\bigwedge^\ell X_{\mc{S},\mf{q}}^{\psi}}{\bigwedge^\ell I_{\mc{T}_1,\mf{q}}^{\psi}  
   	 		+ \cdots + \bigwedge^\ell I_{\mc{T}_n,\mf{q}}^{\psi} } \cong
			\frac{\Lambda_{\mf{q}}}{(\mc{L}_{\mc{S}_1,\psi},\ldots,\mc{L}_{\mc{S}_n,\psi})},
		$$
		where $\mc{T}_i = \mc{S}-\mc{S}_i$ for each $i$.
	\end{enumerate}
\end{thmB}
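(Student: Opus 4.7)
The plan is to prove part (i) via a Poitou--Tate-type duality linking $X_{\mc{S}}^{\psi}$ and $X_{\mc{S}^c}^{\omega\psi^{-1}}$, and then deduce part (ii) from a standard top-exterior-power/determinant calculation once part (i) supplies the freeness we need.

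For (i), I would use the duality for perfect complexes: there is a perfect complex $C^\bullet$ of $\Lambda$-modules computing the $\psi$-component of $\RG{\mc{S}}(\mc{O}_E[1/\mc{S}], \zp(1))$ whose degree-$2$ cohomology is $X_{\mc{S}}^{\psi}$. Applying $\RHom_\Lambda(-,\Lambda)$ and invoking global arithmetic duality identifies the result, up to a shift by $2$, with the analogous complex for $\mc{S}^c$ twisted by the cyclotomic character and by $\iota$; its degree-$2$ cohomology is $(X_{\mc{S}^c}^{\omega\psi^{-1}})^{\iota}(1)$. Under the introductory hypothesis on $\psi$, the local correction terms at primes of $\mc{S}$ are $\Lambda$-free and contribute no positive-degree Ext. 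The resulting spectral sequence then shows that $\rE^i(X_{\mc{S}}^{\psi})$ is supported on $(X_{\mc{S}^c}^{\omega\psi^{-1}})^{\iota}(1)$ for every $i \geq 1$. Localizing at $\mf{q}$ therefore gives $\rE^i(X_{\mc{S},\mf{q}}^{\psi}) = 0$ for all $i \geq 1$, so $X_{\mc{S},\mf{q}}^{\psi}$ has projective dimension $0$ over the regular local ring $\Lambda_{\mf{q}}$ and is free; its rank equals $\ell$ by Lemma \ref{lem:sweepup}. In particular $\bigwedge^\ell X_{\mc{S},\mf{q}}^{\psi}$ is free of rank $1$ and coincides with its torsion-free quotient.

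For (ii), the introductory assumption on $\psi$ makes each $I_{\mc{T}_i}^{\psi}$ a $\Lambda$-free module, and the exact sequence
\begin{equation*}
0 \to I_{\mc{T}_i}^{\psi} \to X_{\mc{S}}^{\psi} \to X_{\mc{S}_i}^{\psi} \to 0,
\end{equation*}
together with the fact that $\mc{S}_i$ is a CM type (so $X_{\mc{S}_i}^{\psi}$ is $\Lambda$-torsion), forces $I_{\mc{T}_i}^{\psi}$ to have rank $\ell$. Localizing at $\mf{q}$ preserves exactness and yields an inclusion of rank-$\ell$ free $\Lambda_{\mf{q}}$-modules with torsion cokernel $X_{\mc{S}_i,\mf{q}}^{\psi}$. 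Passing to top exterior powers converts this inclusion into multiplication by the determinant of any presenting matrix, which generates the $0$-th Fitting ideal, hence the characteristic ideal $(\mc{L}_{\mc{S}_i,\psi})$, of $X_{\mc{S}_i,\mf{q}}^{\psi}$. Fixing an identification $\bigwedge^\ell X_{\mc{S},\mf{q}}^{\psi} \cong \Lambda_{\mf{q}}$ from (i), each $\bigwedge^\ell I_{\mc{T}_i,\mf{q}}^{\psi}$ maps onto the principal ideal $(\mc{L}_{\mc{S}_i,\psi})$; summing over $i$ and passing to the quotient produces the claimed isomorphism.

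The main obstacle is in (i), specifically making the duality precise enough that the support of $\rE^i(X_{\mc{S}}^{\psi})$ for $i \geq 1$ can be bounded by that of $(X_{\mc{S}^c}^{\omega\psi^{-1}})^{\iota}(1)$. Handling the local contributions at primes in $\mc{S}$ (where $\Delta$-cohomology and inertia groups intervene) requires the introductory hypothesis on $\psi$ to guarantee these terms are $\Lambda$-free and do not introduce spurious higher Ext. Once the freeness of $X_{\mc{S},\mf{q}}^{\psi}$ is in hand, part (ii) reduces to the formal determinantal computation sketched above.
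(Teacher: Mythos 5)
Your strategy tracks the paper's own: the statement in the introduction is deduced from Theorem \ref{thm:first_big_thm}, whose proof establishes local freeness of $X_{\mc{S}}^{\psi}$ and each $I_{\mc{T}_i}^{\psi}$ via Ext-vanishing from the Poitou--Tate duality spectral sequence (Propositions \ref{thm:ss} and \ref{prop:iwseq}, Corollary \ref{cor:extvanish}, Theorem \ref{thm:free}, Corollary \ref{cor:localfree}), then finishes with exactly the determinantal computation you describe for part (ii). So the approach is essentially the same.

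The main point to repair is that the degree-$2$ cohomology of $\RGa_{\mc{S},\Iw}(K,\zp(1))$ is $Y_{\mc{S}} = \rH^2_{\mc{S},\Iw}(K,\zp(1))$, not $X_{\mc{S}}$; these differ by the torsion module $\mc{K}_{\mc{S}^c,0}$ via Lemma \ref{lem:needlater}, and correspondingly the dual object is built from $Y_{\mc{S}^c}$ rather than $X_{\mc{S}^c}$. The precise form of the theorem in the body of the paper (Theorem \ref{thm:first_big_thm}) therefore disallows additional primes in the supports of $(\mc{K}_{\mc{S},0}^{\omega\psi^{-1}})^{\iota}(1)$, $\mc{K}_{\overline{\mc{S}},0}^{\psi}$, and (when $\mc{S}=S_f$) $Z_{\mc{S}}^{\psi}$. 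Your phrase ``the local correction terms at primes of $\mc{S}$ are $\Lambda$-free and contribute no positive-degree Ext'' is where you are sweeping this under the rug: you should say explicitly that the introductory hypothesis on $\psi$ forces $\mc{K}_{\mf{p}}^{\psi}=0$ for $\mf{p}\in\overline{\mc{S}}$ and $(\mc{K}_{\mf{p}}^{\omega\psi^{-1}})^{\iota}(1)=0$ for $\mf{p}\in\mc{S}$, which kills $\mc{K}_{\mc{S}^c,0}^{\psi}$ (so $X_{\mc{S}}^{\psi}\cong Y_{\mc{S}}^{\psi}$), $(\mc{K}_{\mc{S},0}^{\omega\psi^{-1}})^{\iota}(1)$, and $Z_{\mc{S}}^{\psi}$ (as a subquotient of the vanishing $\mc{K}_{\overline{\mc{S}},0}^{\psi}$), thereby reducing the disallowed set to the support of $(X_{\mc{S}^c}^{\omega\psi^{-1}})^{\iota}(1)$ alone. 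Two smaller corrections: the Poitou--Tate shift is $[-3]$, not $[-2]$, and the rank $\ell=d-d_{\mc{S}^c}$ is Lemma \ref{lem:ranklemma} (Lemma \ref{lem:sweepup} gives the local ranks $r_{\mf{p}}=d_{\mf{p}}+1$ used in its proof).
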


The rank $\ell$ of $X_{\mc{S}}^{\psi}$ equals $1$ if and only if $\mc{S}$ is a union of two CM types $\mc{S}_1$ and $\mc{S}_2$ that differ in a single completely split prime.  In this case, supposing that $\mc{L}_{\mc{S}_1,\psi}$ and $\mc{L}_{\mc{S}_2,\psi}$ are relatively prime, we prove the following remarkably clean refinement of Theorem A, which rests on proving that $X_{\mc{S}_1 \cap \mc{S}_2}^{\psi}$ and $X_{\overline{\mc{S}}_1 \cap \overline{\mc{S}}_2}^{\omega\psi^{-1}}$ are pseudo-null under this assumption.

\begin{thmC} \label{thm:simplerank1}
	Suppose that $\ell = 1$, and suppose that $\mc{L}_{\mc{S}_1,\psi}$ and $\mc{L}_{\mc{S}_2,\psi}$ are relatively prime.
	Then we have
	\begin{equation}
	\label{eq:simpler1formula}
    		c_2\left(\frac{\Lambda }{(\mc{L}_{\mc{S}_1,\psi},\mc{L}_{\mc{S}_2,\psi})} \right)  = 
    		c_2(X_{\mc{S}_1 \cap \mc{S}_2}^{\psi}) + c_2((X_{\overline{\mc{S}}_1 \cap \overline{\mc{S}}_2}^{\omega\psi^{-1}})^{\iota}(1)),
	\end{equation}
	where $\overline{\mc{S}}_i$ denotes the conjugate CM type to $\mc{S}_i$ for $i \in \{1,2\}$.
\end{thmC}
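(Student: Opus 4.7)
The plan is to deduce Theorem C from Theorem A by specializing to $\ell=1$ and using the coprimality hypothesis. The crux, as previewed in the introduction's sketch, is to prove pseudo-nullity of both $X_{\mc{S}_1\cap\mc{S}_2}^{\psi}$ and $X_{\overline{\mc{S}}_1\cap\overline{\mc{S}}_2}^{\omega\psi^{-1}}$. For the former: since $\mc{S}_1\cap\mc{S}_2\subset\mc{S}_i$, there is a surjection $X_{\mc{S}_i}^{\psi}\twoheadrightarrow X_{\mc{S}_1\cap\mc{S}_2}^{\psi}$, so the characteristic ideal of $X_{\mc{S}_1\cap\mc{S}_2}^{\psi}$ divides both $(\mc{L}_{\mc{S}_1,\psi})$ and $(\mc{L}_{\mc{S}_2,\psi})$, hence is trivial by coprimality. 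For the latter: $\overline{\mc{S}}_1$ and $\overline{\mc{S}}_2$ also differ in a single completely split prime, and the same argument works after transferring coprimality to $\mc{L}_{\overline{\mc{S}}_1,\omega\psi^{-1}}$ and $\mc{L}_{\overline{\mc{S}}_2,\omega\psi^{-1}}$ via the reflection/functional equation for Katz $p$-adic $L$-functions.

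With pseudo-nullity in hand, setting $\ell=1$ in Theorem A trivializes the exterior power, so the exterior quotient is $(X_{\mc{S}}^{\psi})_{\tf}/(I_{\mc{T}_1}^{\psi}+I_{\mc{T}_2}^{\psi})$. The short exact sequences $0\to I_{\mc{T}_j}^{\psi}\to X_{\mc{S}}^{\psi}\to X_{\mc{S}_j}^{\psi}\to 0$ with $\Lambda$-free $I_{\mc{T}_j}^{\psi}$ show that $\rT_1(X_{\mc{S}}^{\psi})$ embeds into each $X_{\mc{S}_j}^{\psi}$; coprimality then makes $\rT_1(X_{\mc{S}}^{\psi})$ pseudo-null, so in particular $\theta_0$ is a unit, and $\theta$ is a unit as a gcd of coprime elements. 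All three modules appearing in the resulting Theorem A identity are then pseudo-null, so $t_2$ agrees with $c_2$ throughout. Identifying $(X_{\mc{S}}^{\psi})_{\tf}/(I_{\mc{T}_1}^{\psi}+I_{\mc{T}_2}^{\psi})$ with $X_{\mc{S}_1\cap\mc{S}_2}^{\psi}$ up to the pseudo-null image of $\rT_1(X_{\mc{S}}^{\psi})$, and using $\mc{S}^c=\overline{\mc{S}}_1\cap\overline{\mc{S}}_2$, then yields \eqref{eq:simpler1formula} once one also identifies
\[
    c_2\!\left(\frac{\Lambda}{\Fitt(\rE^2(X_{\overline{\mc{S}}_1\cap\overline{\mc{S}}_2}^{\omega\psi^{-1}})(1))}\right)
\]
with $c_2((X_{\overline{\mc{S}}_1\cap\overline{\mc{S}}_2}^{\omega\psi^{-1}})^{\iota}(1))$.

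The main obstacle is this last identification. For the pseudo-null module $N:=X_{\overline{\mc{S}}_1\cap\overline{\mc{S}}_2}^{\omega\psi^{-1}}$, local Matlis duality at each height-two prime $\mc{P}$ gives $\rE^2(N)_{\mc{P}}\cong(N^{\iota})_{\mc{P}}^{\vee}$, of length equal to that of $N^{\iota}_{\mc{P}}$, and the Tate twist shifts the prime support compatibly with $\iota$. However, $c_2(\Lambda/\Fitt(M))$ need not equal $c_2(M)$ for a general finite-length $\Lambda_{\mc{P}}$-module $M$ (for instance $M=(\Lambda_{\mc{P}}/\mc{P}\Lambda_{\mc{P}})^{\oplus 2}$ gives lengths $3$ versus $2$). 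The argument must therefore either verify cyclicity of $\rE^2(N)(1)$ at each height-two prime in its support under the coprimality hypothesis, or invoke a Poitou-Tate duality between $X_{\mc{S}_1\cap\mc{S}_2}^{\psi}$ and $X_{\overline{\mc{S}}_1\cap\overline{\mc{S}}_2}^{\omega\psi^{-1}}$ that, together with the coprimality-induced vanishing of the relevant torsion subquotients, directly produces the clean balance asserted in \eqref{eq:simpler1formula}.
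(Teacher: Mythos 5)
Your route of deducing Theorem C from Theorem A is viable (the paper's remark following Theorem C acknowledges this), but there is a fatal gap at the first step and another one you flag but leave unresolved. The fatal gap: the pseudo-nullity of $X_{\overline{\mc{S}}_1\cap\overline{\mc{S}}_2}^{\omega\psi^{-1}}$ cannot be obtained by ``transferring coprimality via the reflection/functional equation for Katz $p$-adic $L$-functions.'' The elements $\mc{L}_{\Sigma,\psi}$ are defined purely algebraically as generators of $c_1(X_{\Sigma}^{\psi})$; the paper assumes no main conjecture, so no such analytic relation is available, and even conditionally a functional equation would only relate $\Sigma$- and $\overline{\Sigma}$-data up to interpolation factors that would themselves need to be controlled. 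The paper proves this pseudo-nullity in Proposition \ref{prop:pn} by a purely algebraic duality argument: the map of exact sequences \eqref{eq:simplediag}, which comes from the Iwasawa-theoretic Poitou--Tate duality of Propositions \ref{prop:iwseq} and \ref{prop:localseq}, shows that $c_1(\rE^1(Y_{\mc{S}^c}^{\omega\psi^{-1}})(1))$ divides each $(\mc{L}_{\mc{S}_i,\psi})$, so coprimality forces $Y_{\mc{S}^c}^{\omega\psi^{-1}}$, and hence $X_{\mc{S}^c}^{\omega\psi^{-1}}$, to be pseudo-null. This same duality also upgrades your conclusion that $\rT_1(X_{\mc{S}}^{\psi})$ is pseudo-null to the stronger statement that $X_{\mc{S}}^{\psi}$ is torsion-free (since $\rE^1$ of a pseudo-null module vanishes and $\rT_1(Y_{\mc{S}}^{\psi})\cong\rE^1(Y_{\mc{S}^c}^{\omega\psi^{-1}})(1)$); with only pseudo-nullity, your identification of $(X_{\mc{S}}^{\psi})_{\tf}/(I_{\mc{T}_1}^{\psi}+I_{\mc{T}_2}^{\psi})$ with $X_{\mc{S}_1\cap\mc{S}_2}^{\psi}$ is only up to a pseudo-null submodule, leaving an unaccounted error term $c_2(\im\rT_1(X_{\mc{S}}^{\psi}))$ in the final formula.

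The second gap is the Fitting-ideal ``main obstacle'' you correctly identify and then leave open. The resolution for $\ell=1$ is available within the framework: the lower row of \eqref{eq:simplediag} exhibits $\rE^2(Y_{\mc{S}^c}^{\omega\psi^{-1}})(1)$ --- which equals $\rE^2(X_{\mc{S}^c}^{\omega\psi^{-1}})(1)$ since $\mc{K}_{\mc{S},0}^{\omega\psi^{-1}}=0$ under the introduction's hypothesis on $\psi$ --- as the cokernel of $Y_{\mc{S}}^{\psi}\to(Y_{\mc{S}}^{\psi})^{**}$, and the target is free of rank $\ell=1$, so this cokernel is cyclic and $\Lambda/\Fitt$ of it is the module itself; Remark \ref{rem:c2E2}, which shows $c_2(\rE^2(M))=c_2(M^{\iota})$ for pseudo-null $M$, then completes the identification. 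You should also note that the paper's actual proof, Theorem \ref{thm:rank1}, takes a cleaner and more general route that bypasses Theorem A and Fitting ideals entirely: it works directly with the diagram \eqref{eq:diagram!} built from the duality sequences and uses additivity of $c_2$ on short exact sequences of pseudo-null modules, and it yields a refinement of \eqref{eq:simpler1formula} valid without the introduction's standing hypothesis on $\psi$.
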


\begin{remark}
	Theorem C is a direct generalization of \cite[Theorem 5.2.5]{BCGKPST}, which treated the case that $E$ is imaginary quadratic. 
	That we could prove this result was far more surprising to us than it might seem: at the time of the writing of \cite{BCGKPST}, 
	the fact that $X_{S_f}^{\psi}$ has rank $[E^+:\Q]$ stood as a serious obstacle to a generalization to arbitrary CM fields. 
	
	While one can derive Theorem C itself through Theorem A (in particular, as $X_{\mc{S}}^{\psi}$ is torsion-free 
	when the torsion module $X_{\mc{S}^c}^{\omega\psi^{-1}}$ is pseudo-null), we give a finer and more subtle version without assumption 
	on $\psi$ and an entirely separate proof in Theorem \ref{thm:rank1}.
\end{remark}

We will show in Proposition \ref{prop:pn} that if $\ell = 1$, then $\mc{L}_{\mc{S}_1,\psi}$ and $\mc{L}_{\mc{S}_2,\psi}$ are relatively prime if 
and only if both $X_{\mc{S}_1 \cap \mc{S}_2}^\psi$ and $X_{\overline{\mc{S}}_1 \cap \overline{\mc{S}}_2}^{\omega\psi^{-1}}$ 
are pseudo-null.

\begin{remark}
	Let us elaborate on a comment made earlier.  One can ask about the relationship between $X_{\mc{S}_1 \cap \mc{S}_2}^{\psi}$ and 
    	$\Lambda /(\mc{L}_{\mc{S}_1,\psi},\mc{L}_{\mc{S}_2,\psi})$ when $\ell > 1$.
    	The maximal pseudo-null submodules of $X_{\mc{S}_1}^\psi$ and $X_{\mc{S}_2}^{\psi}$ are trivial.   Therefore,
    	$\mc{L}_{\mc{S}_1,\psi}$ and $\mc{L}_{\mc{S}_2,\psi}$ are annihilators of $X_{\mc{S}_1}^\psi$ and $X_{\mc{S}_2}^{\psi}$, respectively, 
	so they annihilate their common quotient $X_{\mc{S}_1 \cap \mc{S}_2}^{\psi}$. Consequently, any prime 
    	ideal in the support of $X_{\mc{S}_1 \cap \mc{S}_2}^{\psi}$ should contain both $\mc{L}_{\mc{S}_1,\psi}$ 
    	and $\mc{L}_{\mc{S}_2,\psi}$, and hence should be in the support of 
    	$\Lambda/(\mc{L}_{\mc{S}_1,\psi},\mc{L}_{\mc{S}_2,\psi})$. However, even under the simplifying 
    	assumption that $X_{\mc{S}}^\psi$ is a free $\Lambda$-module, the converse 
    	is unlikely to hold in general.  A prime ideal $\mc{P}$ of $\Lambda$ could be in the support of 
    	both $X_{\mc{S}}^\psi/I_{\mc{T}_1}^\psi$ and $X_{\mc{S}}^\psi/I_{\mc{T}_2}^\psi$ but fail to be in the 
    	support of $X_{\mc{S}}^\psi/(I_{\mc{T}_1}^\psi+I_{\mc{T}_2}^\psi)=X_{\mc{S}_1 \cap \mc{S}_2}^{\psi}$. 
    	For example, $\Lambda$-module bases for $I_{\mc{T}_1}^\psi$ and $I_{\mc{T}_2}^\psi$ 
    	(assuming they are free) could each be linearly dependent modulo $\mc{P}$, but their union might easily 
    	contain a linearly independent subset modulo $\mc{P}$. 
\end{remark}

When $\ell > 1$, it is natural to ask there is an interpretation of the first term on the right-hand
side of \eqref{eq:c2formulasimple} as the second Chern class of a suitable Galois group.  We provide such an interpretation in the case that $\ell = 2$.

\begin{definition}
\label{def:Zdef} Let $L$ be the maximal abelian pro-$p$ extension of $K$ that is unramified outside of $\mc{S} = \mc{S}_1 \cup \mc{S}_2$,
so that $X_{\mc{S}} = \mathrm{Gal}(L/K)$.  
Let $N$  be the maximal abelian pro-$p$ extension of $L$ unramified outside $\mc{S}$  with the following properties:
\begin{enumerate}
\item[(i)] $N$ is Galois over $E$, and $\Gal(N/L)$ is central in $\Gal(N/K)$, and 
\item[(ii)] the natural commutator pairing
$X_\mc{S} \times X_\mc{S} \to \Gal(N/L)$ is Hermitian with respect to the action of $\mc{G}=\Gal(K/E)$.  
\end{enumerate}
Let $M$ be the maximal subextension of $N$ containing $L$ such that $M/L$ is unramified outside $\mc{S}_1 \cap \mc{S}_2$.  Set $U = \Gal(N/L)$ and $V = \Gal(M/L)$.  
\end{definition}

We show that there is a canonical square root of the conjugation action of $\mc{G} = \Gal(K/E)$ on $U$ and on $V$; see Remark \ref{rem:result}. 
We consider the $\psi$-isotypical components $U^{\sqrt{\psi}}$ and $V^{\sqrt{\psi}}$ of $U$ and $V$, respectively, for this square root action.  
The $\psi^2$-isotypical component of the usual conjugation action of $\Delta$ on $V$ is the direct sum of $V^{\sqrt{\psi'\/}}$ over all characters $\psi'$ for which $\psi'^2= \psi^2$.  

\begin{thmD}  
\label{thm:simplerank2}
Suppose $\ell = 2$.  Let $\im(\Tor(U^{\sqrt{\psi}}))$ denote the image of \ $\Tor(U^{\sqrt{\psi}})$
in $V^{\sqrt {\psi}}$ under the homomorphism induced by the surjection $U \twoheadrightarrow V$.  The commutator pairing on $X_{\mc{S}}$ induces an isomorphism
$\bigwedge_{\Omega}^2 X_{\mc{S}}^\psi \xrightarrow{\sim} U^{\sqrt{\psi}}$ and surjections  
$$
 \frac{\bigwedge_{\Omega}^2 X_{\mc{S}}^\psi}{\bigwedge_{\Omega}^2  I_{\mc{T}_1}^\psi + \bigwedge_{\Omega}^2  I_{\mc{T}_2 }^\psi}
\twoheadrightarrow V^{\sqrt {\psi}} \quad \mbox{and} \quad  \frac{(\bigwedge_{\Omega}^2 X_{\mc{S}}^\psi)_{\tf}}{\bigwedge_{\Omega}^2  I_{\mc{T}_1}^\psi + \bigwedge_{\Omega}^2  I_{\mc{T}_2 }^\psi} \twoheadrightarrow \frac{V^{\sqrt{\psi}}}{\im(\Tor(U^{\sqrt{\psi}}))} 
$$
whose kernels are supported in codimension at least $3$. 	
\end{thmD}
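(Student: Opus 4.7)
The plan is to derive everything from the universal property of the Heisenberg-type extension $N$ together with a local analysis at the primes of $\mc{T}_1 \cup \mc{T}_2$: construct the isomorphism $\bigwedge_{\Omega}^2 X_{\mc{S}}^{\psi} \xrightarrow{\sim} U^{\sqrt{\psi}}$ from the commutator pairing, identify $V$ as the quotient of $U$ by the subgroups generated by inertia at $\mc{T}_1$ and $\mc{T}_2$ in order to deduce the two surjections, and finally bound the kernels via a local ramification analysis. For the isomorphism, since $U = \Gal(N/L)$ is central in $\Gal(N/K)$, the rule $(x,y) \mapsto [\tilde x, \tilde y]$ on any lifts to $\Gal(N/K)$ gives a well-defined biadditive alternating pairing $X_{\mc{S}} \times X_{\mc{S}} \to U$. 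The Hermitian condition on this pairing with respect to the $\mc{G}$-action (cf.\ Remark \ref{rem:result}) is exactly the mechanism that singles out a canonical square root of the $\mc{G}$-action on $U$ and sends the image of $X_{\mc{S}}^{\psi} \otimes X_{\mc{S}}^{\psi}$ into $U^{\sqrt{\psi}}$. Surjectivity of the resulting $\Omega$-linear map $\bigwedge_{\Omega}^2 X_{\mc{S}}^{\psi} \to U^{\sqrt{\psi}}$ follows because any nonzero cokernel would be realized by an abelian quotient of $\Gal(N/K)$ strictly larger than $X_{\mc{S}} = \Gal(L/K)$, contradicting maximality of $L$. For injectivity, any nontrivial kernel would permit a central Hermitian extension of $\Gal(L/K)$ whose derived group realizes the full $\bigwedge_{\Omega}^2 X_{\mc{S}}^{\psi}$ as commutators, giving an enlargement of $N$ respecting all of its defining properties and contradicting its maximality.

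For the surjections, observe that lifts of elements of $I_{\mc{T}_i}^{\psi}$ can be chosen from inertia subgroups of $\Gal(N/K)$ above primes of $\mc{T}_i$, so the commutator map carries $\bigwedge_{\Omega}^2 I_{\mc{T}_i}^{\psi}$ into the subgroup $J_i^{\sqrt{\psi}}$ of $U^{\sqrt{\psi}}$ generated by inertia in $\Gal(N/L)$ above $\mc{T}_i$. Since $M/L$ is by construction maximal unramified outside $\mc{S}_1 \cap \mc{S}_2 = \mc{S} - (\mc{T}_1 \cup \mc{T}_2)$, we have $V = U/(J_1 + J_2)$ and hence $V^{\sqrt{\psi}} = U^{\sqrt{\psi}}/(J_1^{\sqrt{\psi}} + J_2^{\sqrt{\psi}})$. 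The composite $\bigwedge_{\Omega}^2 X_{\mc{S}}^{\psi} \xrightarrow{\sim} U^{\sqrt{\psi}} \twoheadrightarrow V^{\sqrt{\psi}}$ then kills $\bigwedge_{\Omega}^2 I_{\mc{T}_1}^{\psi} + \bigwedge_{\Omega}^2 I_{\mc{T}_2}^{\psi}$, giving the first surjection. The second surjection follows from the fact that the isomorphism of the first step identifies $\Tor(\bigwedge_{\Omega}^2 X_{\mc{S}}^{\psi})$ with $\Tor(U^{\sqrt{\psi}})$, so passing to the torsion-free quotient on the left is exactly the same as quotienting by $\im(\Tor(U^{\sqrt{\psi}}))$ on the right.

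The main obstacle is showing that the kernels of these two surjections are supported in codimension at least three. This reduces to proving that the image of $\bigwedge_{\Omega}^2 I_{\mc{T}_i}^{\psi}$ under the commutator map agrees with $J_i^{\sqrt{\psi}}$ modulo a $\Lambda$-module of codimension at least three, because the kernel on each side of the surjection corresponds to the preimage of $J_1^{\sqrt{\psi}} + J_2^{\sqrt{\psi}}$ under the isomorphism of the first step. Under the standing non-triviality hypothesis on $\psi$ at primes of $\overline{\mc{S}}$, each $I_{\mc{T}_i}^{\psi}$ is $\Lambda$-free, and a local computation at each $v \in \mc{T}_i$ expresses the global-to-local inertia discrepancies in terms of $\psi$-components of local Iwasawa cohomology, which are expected to be pseudo-null or supported in low codimension. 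Passing to a second exterior power in the $\ell = 2$ setting then promotes codimension-one deviations by two additional codimensions via a $2 \times 2$ determinant identity, yielding the required codimension-three bound. Carrying out this bookkeeping in isotypic components, and doing it uniformly for both surjections so that the torsion-modifications of the second one are handled compatibly, is the principal technical burden.
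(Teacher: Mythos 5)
Your outline mirrors the paper's, but two of the three steps rest on claims that are either unjustified or, as sketched, incorrect.

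For the isomorphism $\bigwedge_{\Omega}^2 X_{\mc{S}}^\psi \xrightarrow{\sim} U^{\sqrt{\psi}}$, your surjectivity argument via maximality of $L$ is sound, but the injectivity argument is not: you assert that a nonzero kernel would ``permit a central Hermitian extension of $\Gal(L/K)$ $\ldots$ giving an enlargement of $N$,'' but constructing such an enlargement as a Galois group inside $K_{\mc{S}}^{(p)}$ is a nontrivial embedding problem, and the obstruction lives in $\rH^2(G_{K,\mc{S}},\cdot)$. The paper supplies precisely this missing ingredient via Lemma \ref{lem:BigTarBaby}, which proves $\rH^2(G_{K,\mc{S}},\qp/\zp)=0$ (ultimately via weak Leopoldt), after which Lemma \ref{lem:isom} together with Proposition \ref{prop:Brown} yield $\Gal(L_2/L)\cong X_{\mc{S}}\wedge_{\zp}X_{\mc{S}}$. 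Without this vanishing, your ``enlargement'' need not exist as a subfield of $K_{\mc{S}}^{(p)}$, and the maximality argument collapses.

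For the codimension-three bound, you correctly reduce the problem to comparing the image of $\bigwedge_{\Omega}^2 I_{\mc{T}_i}^{\psi}$ inside $U^{\sqrt{\psi}}$ with the inertia subgroup $J_i^{\sqrt{\psi}}$ above $\mc{T}_i$; in the paper's notation this discrepancy is $\Gal(N_{\mc{T}_i}/M_{\mc{T}_i})$, where $N_P$ is the maximal subextension of $N/L$ in which inertia above $P$ in $\Gal(N_P/K)$ is abelian and $M_P$ is the maximal subextension of $N/L$ unramified above $P$. However, the mechanism you propose --- that a ``$2\times 2$ determinant identity'' promotes codimension-one deviations by two extra codimensions --- is not what is going on and would not deliver codimension three. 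The paper's Lemma \ref{lem:RalphLemma} instead shows that $\Gal(N_{\mc{T}_i}/M_{\mc{T}_i})$ is generated by inertia that factors through quotients of $\Omega\cotimes{\zp\ps{\mc{G}_{\mf{p}}}} H_{\mf{p}}$ for local Galois groups $H_{\mf{p}}$, that these elements lie in the kernel of $I_{\mc{T}_i}^{\psi}\to Y_{\mc{S}}^{\psi}$, and that this kernel is supported in codimension at least $3$ because $I_{\mc{T}_i}^{\psi}\hookrightarrow(I_{\mc{T}_i}^{\psi})^{**}$ is injective and the induced map of reflexive hulls into $(Y_{\mc{S}}^{\psi})^{**}$ is injective after localization at height-two primes. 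This is a reflexivity and local-cohomology argument, not an exterior-power computation; the determinant heuristic does not substitute for it.
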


Theorem D is proved in Theorem \ref{thm:mainquartic}. 
For a field diagram summarizing the groups and fields it involves, see Appendix \ref{s:FieldDiagram}.
The significance of this theorem is that when $\ell = 2$, a particular graded piece of a higher term in the lower central series of the Galois group of the maximal unramified outside $\mathcal{S}$ pro-$p$ extension $K_{\mathcal{S}}^{(p)}$ of $K$ arises when one seeks  a Galois-theoretic interpretation of natural modules defined by $p$-adic $L$-functions.  If $ V^{\sqrt{\psi}}$ is pseudo-null, one has
$$t_2\left (V^{\sqrt {\psi}} \right ) - t_2\left (\frac{V^{\sqrt{\psi}}}{\im(\Tor(U^{\sqrt{\psi}}))} \right ) = t_2 \left (\im(\Tor(U^{\sqrt{\psi}}))\right ).$$
However, $t_2$ is not an exact functor on exact sequences of modules that are not pseudo-null, and we do not know in general
whether $V^{\sqrt{\psi}}$ is pseudo-null.

\begin{remark}
It would be natural to consider how to generalize Theorem D for $\ell > 2$.  
If $N_1$ is the maximal abelian pro-$p$ unramified outside $\mathcal{S}$ extension of
the field $L$ in Definition \ref{def:Zdef}, then $X_{\mathcal{S}} = \mathrm{Gal}(L/K)$ and $T = \mathrm{Gal}(N_1/L)$
are the first two successive quotients in the derived series of $\mathrm{Gal}(K_{\mathcal{S}}^{(p)}/K)$.
For $\ell = 2$, the  Galois groups $U = \mathrm{Gal}(N/L)$ and $V = \mathrm{Gal}(M/L)$ 
are quotients of 
$\rH_0(X_{\mathcal{S}},T) = T_{X_{\mathcal{S}}}$.  For $\ell > 2$, we expect quotients of 
the homology group $\rH_{\ell - 2}(X_{\mathcal{S}},T)$ to appear. 
\end{remark}

We end this introduction with two comments on potential research directions.  First, we remark that though we have restricted ourselves to classical Iwasawa modules,
we expect that the approach we have outlined in this paper will apply to general Selmer groups.  This is already illustrated in the recent work of Lei and Palvannan on Selmer groups of supersingular elliptic curves \cite{LP} and tensor products of Hida families \cite{LP2}.

Secondly, we note that congruences between Eisenstein series and cusp forms play a key role in proofs of one of the divisibilities in main conjectures, whereby the existence of residually-reducible Galois representations with certain ramification behavior leads to lower bounds for the support of Selmer groups.  One can ask how to apply such techniques to directly study the higher codimension behavior of Iwasawa modules. The right hand side of \eqref{eq:simpler1formula} has two terms measuring the size of Galois groups of extensions unramified outside the intersection of two CM types. It would be interesting if one could construct Galois representations that separately control each of the two terms. For instance, one might consider congruences between Hida families modulo Eisenstein ideals attached to $\Lambda$-adic Eisenstein series with constant terms arising from different $p$-adic $L$-functions. 

\section{Duality}

\label{s:duality}

Let $p$ be a prime, let $E$ be a number field, and 
let $F$ be a finite Galois extension of $E$ of prime-to-$p$ degree.  We suppose that $F$ has no real places if $p = 2$.
Let $\Delta = \Gal(F/E)$.  Let $K$ be a Galois
extension of $E$ that is a $\zp^r$-extension of $F$ for some $r \ge 1$, and set $\Gamma = \Gal(K/F)$.  Note that $K/F$ is unramified outside $p$ as a compositum of $\zp$-extensions.  Set $\mc{G} = \Gal(K/E)$ and $\Omega = \zp\ps{\mc{G}}$.

Let $S=S_{p,\infty}$ be the set of all primes of $E$ over $p$ and $\infty$, and let 
$S_f$ be the set of all primes of $E$ over $p$.
For any algebraic extension $F'$ of $F$, let $G_{F',S}$ denote the Galois group of the maximal extension $F'_{S}$ of $F'$ that is unramified outside the primes over $S$.  Let $\mc{Q} = \Gal(F_S/E)$.  
For a compact $\zp\ps{\mc{Q}}$-module $T$, we consider the Iwasawa cochain complex
$$
	\rC_{\Iw}(K,T) = \varprojlim_{F' \subset K} \rC(G_{F',S},T)
$$ 
that is the inverse limit of continuous cochain complexes under corestriction maps, 
with $F'$ running over the finite extensions of $F$ in $K$.  It has the natural structure of a complex of $\Omega$-modules.  We let $\RGa_{\Iw}(K,T)$ denote its class in the derived category and $\rH^i_{\Iw}(K,T)$ its $i$th
cohomology group.  We similarly let
$$
	\rC_{\mf{p},\Iw}(K,T) = \varprojlim_{F' \subset K} \bigoplus_{\mf{P} \mid \mf{p}} \rC(G_{F'_{\mf{P}}},T)
$$
for any $\mf{p} \in S_f$, where $G_{F'_{\mf{P}}}$ denotes the absolute Galois group of the completion $F'_{\mf{P}}$.

For a finitely generated $\Omega$-module, we have 
$$
	\Ext_{\Omega}^i(M,\Omega) \cong \Ext^i_{\zp\ps{\Gamma}}(M,\zp\ps{\Gamma})
$$ 
as $\zp\ps{\Gamma}$-modules
(since $\Omega$ is $\zp\ps{\Gamma}$-projective).  
We employ the notation
$$
	\rE^i(M) = \Ext^i_{\Omega}(M^{\iota},\Omega),
$$ 
where $M^{\iota}$ is the $\Omega$-module $M$ with the new action $\cdot_{\iota}$ given by $f \cdot_{\iota} m = \iota(f)m$ for $f \in \Omega$, where
$\iota \colon \Omega \to \Omega$ is the continuous $\zp$-linear involution given on $\mc{G}$ by inversion.
This is a bit cleaner for the purposes of duality, as it alleviates the need to place involutions in the statements of various results.
We set $M^* = \rE^0(M) = \Hom_{\Omega}(M^{\iota},\Omega)$.

For later use, we note that there are natural isomorphisms of $\Omega$-modules 
\begin{eqnarray*} \label{eq:equalities}
\rE^i(M^\iota) = \rE^i(M)^{\iota} &\mathrm{and}& \rE^i(M(-1)) = \rE^i(M)(1),
\end{eqnarray*}
where $M(n)$ for $n \in \Z$ is the $\Omega$-module that is $M$ with the modified $\mc{G}$-action $g \cdot m = \chi_p^n(g)gm$ for
$\chi_p \colon \mc{G} \to \zp^{\times}$ the $p$-adic cyclotomic character.

Let $\Sigma$ be a subset of $S_f$. Let $\Sigma^c = S_f - \Sigma$.  We let 
$\RGa_{\Sigma,\Iw}(K,T)$ be the class in the derived category of the cone
$$
	\rC_{\Sigma,\Iw}(K,T) = \Cone\left(\rC_{\Iw}(K,T) \to \bigoplus_{v \in \Sigma} \rC_{v,\Iw}(K,T)\right)[-1]
$$
and define $\rH^i_{\Sigma,\Iw}(K,T)$ to be its $i$th cohomology group.
We define $\RGa_{\Sigma^c,\Iw}(K,T)$ and $\rH^i_{\Sigma^c,\Iw}(K,T)$ similarly.

We have the following two spectral sequences.

\begin{proposition} \label{thm:ss}
	Let $T$ be a compact $\zp\ps{\mc{Q}}$-module that is finitely generated and free over $\zp$,
	and let $T^{\#}$ be its $\zp$-dual.
	There are convergent spectral sequences of $\Omega$-modules 
	\begin{eqnarray*}
		&{\rm F}_2^{i,j}(T) = \rE^i(\rH^{3-j}_{\Sigma^c,\Iw}(K,T)) \Rightarrow {\rm F}^{i+j}(T) = 
		\rH^{i+j}_{\Sigma,\Iw}(K,T^\#(1)),&\\
		&{\rm H}_2^{i,j}(T) = \rE^i(\rH^{3-j}_{\Sigma,\Iw}(K,T^\#(1))) \Rightarrow \rH^{i+j}(T) = 
		\rH^{i+j}_{\Sigma^c,\Iw}(K,T).&
	\end{eqnarray*}
\end{proposition}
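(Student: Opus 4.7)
The plan is to deduce both spectral sequences from an Iwasawa-theoretic form of Poitou--Tate global duality combined with the standard hyperext spectral sequence. The first step is to establish, in the derived category of $\Omega$-modules, a canonical quasi-isomorphism
$$
\RGa_{\Sigma,\Iw}(K,T^{\#}(1)) \;\simeq\; \RHom_\Omega\bigl(\RGa_{\Sigma^c,\Iw}(K,T)^{\iota},\Omega\bigr)[-3].
$$
At each finite layer $F \subset F' \subset K$, Tate's nine-term Poitou--Tate sequence together with local Tate duality at the primes in $\Sigma^c$ identifies the $\Sigma$-cone cochain complex $\rC_{\Sigma}(G_{F',S},T^{\#}(1))$ with the $\zp$-linear dual of $\rC_{\Sigma^c}(G_{F',S},T)$, shifted by $-3$. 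Passing to the inverse limit over corestriction in the tower $K/F$ converts $\zp$-duality of these finite layers into $\Omega$-duality twisted by $\iota$, giving the displayed quasi-isomorphism; this is the standard derived-category upgrade carried out in the framework of Nekov\'a\v{r}'s Selmer complexes and of Fukaya--Kato.

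Next, I apply the hyperext (local-to-global) spectral sequence
$$
E_2^{p,q} = \Ext^p_\Omega\bigl(H^{-q}(C),\Omega\bigr) \Rightarrow H^{p+q}\RHom_\Omega(C,\Omega)
$$
to the complex $C = \RGa_{\Sigma^c,\Iw}(K,T)^{\iota}$. By the very definition of $\rE^p$, the $E_2$-term equals $\rE^p(\rH^{-q}_{\Sigma^c,\Iw}(K,T))$. Reindexing by setting $i=p$ and $j=q+3$, and using the duality to identify the abutment with $\rH^{i+j}_{\Sigma,\Iw}(K,T^{\#}(1))$, produces exactly the first ($\rF$-)spectral sequence. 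For the second ($\rH$-)spectral sequence, I interchange the roles of $\Sigma$ and $\Sigma^c$ (equivalently, apply the duality with $T$ replaced by $T^{\#}(1)$, using the canonical isomorphism $(T^{\#}(1))^{\#}(1)\cong T$) to obtain
$$
\RGa_{\Sigma^c,\Iw}(K,T) \;\simeq\; \RHom_\Omega\bigl(\RGa_{\Sigma,\Iw}(K,T^{\#}(1))^{\iota},\Omega\bigr)[-3],
$$
and run the same hyperext spectral sequence against the right-hand side.

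The principal obstacle is the first step: verifying the duality as a statement of complexes (rather than of individual cohomology groups) over $\Omega$, and in particular checking that the mapping cone defining $\rC_{\Sigma,\Iw}$ commutes appropriately with inverse limits along the tower and with the functor $\RHom_\Omega(-,\Omega)$ on the resulting system of perfect complexes. Convergence of each spectral sequence is then automatic once one knows that each $\rH^i_{\Sigma,\Iw}(K,T)$ is a finitely generated $\Omega$-module and that $\Omega$ has finite global dimension, both of which are standard in the present setting.
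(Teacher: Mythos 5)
Your proposal follows essentially the same route as the paper: establish the derived Iwasawa-theoretic Poitou--Tate duality between the cone complexes $\RGa_{\Sigma,\Iw}(K,T^{\#}(1))$ and $\RHom_\Omega(\RGa_{\Sigma^c,\Iw}(K,T)^{\iota},\Omega)[-3]$ by citing Nekov\'a\v{r}'s framework, and then extract both spectral sequences by applying the standard hyperext spectral sequence for $\RHom_\Omega(-,\Omega)$ and reindexing. The paper is marginally more explicit at the one point you flag as the ``principal obstacle''---it first writes the commutative diagram of exact triangles relating the cones to the global and local Iwasawa complexes and then observes that the Nekov\'a\v{r} dualities for the uncapped global complex and for the local complexes induce the desired duality on the cones---but the substance and the citations (including the remark that $\Omega$-linearity follows from $\zp\ps{\Gamma}$-projectivity of $\Omega$, with the general case in Lim--Sharifi) are the same.
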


\begin{proof}
	By definition, we have the commutative diagram of exact triangles (of which we write three terms)
	$$
		\SelectTips{cm}{} \xymatrix{
			\bigoplus_{v \in \Sigma} \RGa_{v,\Iw}(K,T)[-1] \ar@{=}[r] \ar[d] & \bigoplus_{v \in \Sigma} \RGa_{v,\Iw}(K,T)[-1]
			\ar@{-->}[d] \\
			\bigoplus_{v \in S_f} \RGa_{v,\Iw}(K,T)[-1] \ar[r] \ar[d] & \RGa_{S_f,\Iw}(K,T) \ar[r] \ar[d] & \RGa_{\Iw}(K,T) \ar@{=}[d] &\\
			\bigoplus_{v \in \Sigma^c} \RGa_{v,\Iw}(K,T)[-1] \ar[r] & \RGa_{\Sigma^c,\Iw}(K,T) \ar[r]   & \RGa_{\Iw}(K,T),
		}
	$$
	with the dashed arrow being the induced morphism.  The derived Iwasawa-theoretic versions of
	Poitou-Tate and Tate duality found in \cite[Section 8.5]{nekovar} then yield isomorphisms in the derived category of 
	finitely generated $\Omega$-modules
	$$
		\SelectTips{cm}{} \xymatrix{
		\RGa_{\Sigma,\Iw}(K,T) \ar@{-->}[r]^-{\sim} \ar[d] & 
		\RHom_{\Omega}(\RGa_{\Sigma^c,\Iw}(K,T^{\#}(1))^{\iota},\Omega)[-3] \ar[d] \\
		 \RGa_{\Iw}(K,T)  \ar[r]^-{\sim} \ar[d] & \RHom_{\Omega}(\RGa_{S_f,\Iw}(K,T^{\#}(1))^{\iota},\Omega)[-3] \ar[d] \\
		\bigoplus_{v \in \Sigma} \RGa_{v,\Iw}(K,T) \ar[r]^-{\sim} & 
		\bigoplus_{v \in \Sigma} \RHom_{\Omega}(\RGa_{v,\Iw}(K,T^{\#}(1))^{\iota},\Omega)[-2],
		}
	$$
	where the lower two isomorphisms yield the isomorphism of cones.  (That these are morphisms in the derived category 
	of $\Omega$-modules and not simply $\zp\ps{\Gamma}$-modules follows from their definitions 
	and the fact that $\Omega$ is $\zp\ps{\Gamma}$-projective.  The case that $\Delta$ is abelian is treated in \cite{nekovar}, and
	this can be found in a more general context in \cite[Theorem 4.5.1]{LS}.)
\end{proof}

Let us now focus on the case of $\zp(1)$-coefficients.  

\begin{lemma} \label{lem:cohdegs}
	We have $\rH^i_{\Sigma,\Iw}(K,\zp(1)) = 0$ unless $i \in \{1,2,3\}$, and $\rH^3_{\Sigma,\Iw}(K,\zp(1))$ vanishes unless $\Sigma^c$ 
	is empty, in which case it is isomorphic to $\zp$ as an $\Omega$-module.
\end{lemma}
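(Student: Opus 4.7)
The plan is to exploit the defining exact triangle
$$\RGa_{\Sigma,\Iw}(K,\zp(1)) \to \RGa_{\Iw}(K,\zp(1)) \to \bigoplus_{v \in \Sigma} \RGa_{v,\Iw}(K,\zp(1)) \to$$
together with cohomological dimension bounds and Poitou--Tate duality in the Iwasawa limit. To establish the vanishing outside degrees $\{1,2,3\}$, I would use that the standing hypothesis on $p$ and the real places of $F$ forces $\mathrm{cd}_p(G_{F',S}) \le 2$ and $\mathrm{cd}_p(G_{F'_{\mf P}}) \le 2$ for every finite $F'$ in the tower and every place $\mf{P}$, so that $\rH^i_{\Iw}(K,\zp(1)) = 0$ and each $\rH^i_{v,\Iw}(K,\zp(1)) = 0$ for $i \ge 3$. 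Moreover $\rH^0_{\Iw}(K,\zp(1)) = \varprojlim_{F'} T_p\mu(F') = 0$, since $\mu_{p^\infty}(F')$ is finite for any number field $F'$. Feeding these into the long exact sequence of the cone triangle yields $\rH^i_{\Sigma,\Iw}(K,\zp(1)) = 0$ for $i \notin \{1,2,3\}$ and the identification
$$\rH^3_{\Sigma,\Iw}(K,\zp(1)) \;\cong\; \coker\!\Bigl(\rH^2_{\Iw}(K,\zp(1)) \to \bigoplus_{v \in \Sigma} \rH^2_{v,\Iw}(K,\zp(1))\Bigr).$$

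To pin down this cokernel I would invoke the Poitou--Tate sequence in the Iwasawa limit for $\zp(1)$-coefficients and $\Sigma = S_f$, obtained by applying $\varprojlim_{F'} \varprojlim_n$ to the finite-level nine-term sequences with $\mu_{p^n}$-coefficients (Mittag--Leffler is automatic, since each finite-level term is a finite $p$-group). Its terminal segment reads
$$\rH^2_{\Iw}(K,\zp(1)) \to \bigoplus_{v \in S_f} \rH^2_{v,\Iw}(K,\zp(1)) \xrightarrow{\sum_v \mathrm{inv}_v} \zp \to 0,$$
where the right-hand $\zp$ arises as $\varprojlim \rH^0(G_{F',S},\Z/p^n)^\vee$ and therefore carries trivial $\Omega$-action (the action of $\mc{Q}$ on the constant coefficients $\Z/p^n$ is trivial). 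Each local invariant $\mathrm{inv}_v$ is surjective in the Iwasawa limit: fix a compatible system of primes of $F'$ above $v$ and use that corestriction intertwines the finite-level local invariants with the identity on $\Z/p^n$. When $\Sigma = S_f$ this directly gives $\rH^3_{\Sigma,\Iw}(K,\zp(1)) \cong \zp$ as $\Omega$-modules. When $\Sigma \subsetneq S_f$, fix any $v_0 \in \Sigma^c$; surjectivity of $\mathrm{inv}_{v_0}$ together with the split exact sequence $0 \to \bigoplus_{v \in \Sigma^c} \rH^2_{v,\Iw} \to \bigoplus_{v \in S_f} \rH^2_{v,\Iw} \to \bigoplus_{v \in \Sigma} \rH^2_{v,\Iw} \to 0$ and the snake lemma force the cokernel against $\bigoplus_{v \in \Sigma}$ to vanish.

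The main obstacle I anticipate is formalising the limit in Poitou--Tate: one needs to verify that the finite-level nine-term sequence passes to $\varprojlim$ with the stated $\Omega$-equivariant identification of the rightmost term, and that the local invariants remain surjective in the compact Iwasawa limit on each summand. These points are standard but merit an explicit reference into the framework of \cite{nekovar} already used in Proposition \ref{thm:ss}, rather than an ad hoc argument.
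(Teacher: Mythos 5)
Your proof is correct and follows essentially the same route as the paper: cohomological dimension bounds and vanishing in degree $0$ give the restriction to $i \in \{1,2,3\}$, and the tail of the Iwasawa-theoretic Poitou--Tate sequence (surjectivity of the summed local invariants onto $\zp$) handles $\rH^3_{\Sigma,\Iw}$. The only cosmetic difference is that the paper extracts the exact sequence $\bigoplus_{v \in \Sigma^c}\rH^2_{v,\Iw} \to \rH^3_{S_f,\Iw} \to \rH^3_{\Sigma,\Iw} \to 0$ directly from the triangle $\bigoplus_{v \in \Sigma^c}\RGa_{v,\Iw}[-1] \to \RGa_{S_f,\Iw} \to \RGa_{\Sigma,\Iw}$ appearing in the proof of Proposition~\ref{thm:ss}, whereas you rederive the same sequence via a snake-lemma comparison between the $\Sigma$ and $S_f$ cone triangles.
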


\begin{proof}
	The first statement is a consequence of the fact that $G_{E,S}$ and $G_{E_{\mf{p}}}$ for all $\mf{p} \in S_f$ have $p$-cohomological
	dimension $2$, 
	 the vanishing in degree $0$ following from the fact that $\Gamma$ is infinite.  The first map in the exact sequence
	\begin{equation*}
		\bigoplus_{v \in \Sigma^c} \rH^2_{v,\Iw}(K,\zp(1)) \to \rH^3_{S_f,\Iw}(K,\zp(1)) \to \rH^3_{\Sigma,\Iw}(K,\zp(1)) \to 0
	\end{equation*} 
	is identified via duality (i.e., invariant maps) with the summation map $\bigoplus_{w  \in \Sigma_K^c} \zp \to \zp$, where 
	$\Sigma_K^c$ is the set of places of $K$ over places in $\Sigma^c$.  The second statement follows.
\end{proof}

Let $X_\Sigma$ denote the $\Sigma$-ramified Iwasawa module
over $K$.  Let $X^\flat_\Sigma$ denote the maximal quotient of $X_\Sigma$ that is completely split at the primes in $S_f-\Sigma$.  
We also set 
\begin{equation}
\label{eq:Y}
	Y_\Sigma = \rH^2_{\Sigma,\Iw}(K,\zp(1)).
\end{equation}
For $\mf{p} \in S_f$, let $\mc{G}_{\mf{p}}$ denote the decomposition group in $\mc{G}$ 
 at a place over the prime $\mf{p}$ in $K$, and
set $\mc{K}_{\mf{p}} = \zp\ps{\mc{G}/\mc{G}_{\mf{p}}}$, which has the natural structure of a left $\Omega$-module.
Set
\begin{equation} \label{eq:K}
	\mc{K}_\Sigma = \bigoplus_{\mf{p} \in \Sigma} \mc{K}_{\mf{p}},
\end{equation}
so in particular $\mc{K}_\Sigma = 0$ if $\Sigma = \varnothing$.  Let 
\begin{equation} \label{eq:K0}
	\mc{K}_{\Sigma,0} = \ker(\mc{K}_\Sigma \to \zp)
\end{equation}
be the kernel of the sum of the augmentation maps.

For $\mf{p} \in S_f$, let $\Gamma_{\mf{p}} = \mc{G}_{\mf{p}}\cap \Gamma$ be the decomposition group in $\Gamma$ at a prime over $\mf{p}$ in $K$, and let 
\begin{equation} \label{eq:r}
	r_{\mf{p}} = \rank_{\zp} \Gamma_{\mf{p}}.
\end{equation} 
By \cite[Lem.~4.1.13]{BCGKPST}, we have the following.

\begin{remark} \label{EKp}
	For $j \ge 0$, there are isomorphisms $\rE^j(\mc{K}_{\mf{p}}) \cong (\mc{K}_{\mf{p}}^{\iota})^{\delta_{r_{\mf{p}},j}}$
	of $\Omega$-modules.
\end{remark}

Let $\mf{D}_{\mf{p}}$ denote the Galois group of the maximal abelian, pro-$p$ quotient
of the absolute Galois group of the completion $K_{\mf{p}}$ of $K$ at a prime over $\mf{p}$.  Define $\mf{I}_{\mf{p}}$ to be the inertia
subgroup of $\mf{D}_{\mf{p}}$.  We have completed tensor products 
$$ \label{eq:DpIp}
	D_{\mf{p}} =  \Omega \cotimes{\zp\ps{\mc{G}_{\mf{p}}}} \mf{D}_{\mf{p}}\quad \mr{and}\quad 
I_{\mf{p}} =  \Omega \cotimes{\zp\ps{\mc{G}_{\mf{p}}}} \mf{I}_{\mf{p}}.\
$$
These have the structure of  $\Omega$-modules by left multiplication.  
Set
\begin{eqnarray} \label{eq:DI}
	D_\Sigma = \bigoplus_{\mf{p} \in \Sigma} D_{\mf{p}} &\mr{and}& I_\Sigma = \bigoplus_{\mf{p} \in \Sigma} I_{\mf{p}}.
\end{eqnarray}

\begin{lemma}
\label{lem:needlater}
    There is a canonical exact sequence
   $$
    	0 \to X^\flat_{\Sigma} \to Y_{\Sigma} \to \mc{K}_{\Sigma^c,0} \to 0.
    $$
\end{lemma}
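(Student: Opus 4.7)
The strategy is to derive the sequence from the long exact sequence of Iwasawa cohomology attached to the defining exact triangle
\[
\RGa_{\Sigma, \Iw}(K, \zp(1)) \to \RGa_\Iw(K, \zp(1)) \to \bigoplus_{v \in \Sigma} \RGa_{v, \Iw}(K, \zp(1)),
\]
coupled with standard Kummer-theoretic identifications and an application of class field theory to the idele class group.

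First, by Kummer theory and passage to the inverse limit along $K/F$, one identifies
\[
\rH^1_\Iw(K, \zp(1)) = \mc{U}_S, \qquad \rH^1_{v, \Iw}(K, \zp(1)) = \mc{V}_v, \qquad \rH^2_{v, \Iw}(K, \zp(1)) = \mc{K}_v,
\]
where $\mc{U}_S = \varprojlim_L \bigl(\mc{O}_{L, S}^{\times} \cotimes{\Z} \zp\bigr)$ is the Iwasawa module of global $S$-units and $\mc{V}_v$ is the analogous Iwasawa module of local multiplicative groups at $v$.  Taking the inverse limit over $n$ and then over $L \subset K$ of the finite-level Kummer sequence $0 \to \Pic(\mc{O}_{L, S})/p^n \to \rH^2(G_{L, S}, \mu_{p^n}) \to \Br(\mc{O}_{L,S})[p^n] \to 0$ yields the classical exact sequence
\[
0 \to X_\varnothing^{\flat} \to \rH^2_\Iw(K, \zp(1)) \to \mc{K}_{S_f, 0} \to 0,
\]
which is exactly the $\Sigma = \varnothing$ case of the desired statement.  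Combining this with Lemma~\ref{lem:cohdegs} (which controls $\rH^3_{\Sigma, \Iw}$) and the short exact sequence $0 \to \mc{K}_{\Sigma^c, 0} \to \mc{K}_{S_f, 0} \to \mc{K}_\Sigma \to 0$, the LES produces two short exact sequences
\[
0 \to \coker\bigl(\mc{U}_S \to \bigoplus_{v \in \Sigma} \mc{V}_v\bigr) \to Y_\Sigma \to \ker\bigl(\rH^2_\Iw(K, \zp(1)) \to \mc{K}_\Sigma\bigr) \to 0
\]
and
\[
0 \to X_\varnothing^{\flat} \to \ker\bigl(\rH^2_\Iw(K, \zp(1)) \to \mc{K}_\Sigma\bigr) \to \mc{K}_{\Sigma^c, 0} \to 0.
\]

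Finally, global class field theory applied to the idele class groups of the finite layers of $K/F$ and passage to the limit produces a canonical exact sequence
\[
0 \to \coker\bigl(\mc{U}_S \to \bigoplus_{v \in \Sigma} \mc{V}_v\bigr) \to X_\Sigma^{\flat} \to X_\varnothing^{\flat} \to 0,
\]
in which the surjection sends $X_\Sigma^{\flat}$ to its maximal subextension unramified at $\Sigma$ (which is then unramified everywhere and completely split at $S_f$) and the kernel is the image of the local reciprocity maps $\mc{V}_v \to X_\Sigma^{\flat}$ for $v \in \Sigma$.  Splicing this sequence with the two above via the snake lemma applied to a suitable three-by-three commutative diagram then yields the desired exact sequence $0 \to X_\Sigma^{\flat} \to Y_\Sigma \to \mc{K}_{\Sigma^c, 0} \to 0$.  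The chief technical delicacy is checking that the two extensions of $X_\varnothing^{\flat}$ by $\coker(\mc{U}_S \to \bigoplus_{v \in \Sigma} \mc{V}_v)$ produced respectively by class field theory (yielding $X_\Sigma^{\flat}$) and by the Iwasawa cohomology LES (yielding a submodule of $Y_\Sigma$) are canonically identified; this compatibility reduces to that of Kummer theory with Artin reciprocity on the idele class group.
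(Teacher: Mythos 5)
Your proposal is correct in outline but takes a genuinely different and more roundabout route than the paper. You apply the long exact sequence to the triangle $\RGa_{\Sigma,\Iw}(K,\zp(1)) \to \RGa_{\Iw}(K,\zp(1)) \to \bigoplus_{v\in\Sigma}\RGa_{v,\Iw}(K,\zp(1))$, whereas the paper applies it to the complementary one $\bigoplus_{v\in\Sigma^c}\RGa_{v,\Iw}(K,\zp(1))[-1] \to \RGa_{S_f,\Iw}(K,\zp(1)) \to \RGa_{\Sigma,\Iw}(K,\zp(1))$, the dashed-arrow triangle in the proof of Proposition~\ref{thm:ss}. That choice is decisive: Poitou--Tate duality identifies $\rH^2_{S_f,\Iw}(K,\zp(1))$ with $X_{S_f}$ and local Tate duality gives $\rH^1_{v,\Iw}(K,\zp(1)) \cong D_v$ and $\rH^2_{v,\Iw}(K,\zp(1)) \cong \mc{K}_v$, after which $\coker(D_{\Sigma^c}\to X_{S_f})$ \emph{is} $X^\flat_\Sigma$ by definition (the maximal quotient of $X_{S_f}$ in which places over $\Sigma^c$ split completely) and $\ker(\mc{K}_{\Sigma^c}\to\zp)=\mc{K}_{\Sigma^c,0}$ by definition. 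The lemma then falls out of a single five-term stretch of one LES, with no splicing. Your version requires three separate inputs --- the Kummer-theoretic $\Sigma=\varnothing$ base case, a class field theory sequence $0\to\coker(\mc{U}_S\to\bigoplus_{v\in\Sigma}\mc{V}_v)\to X^\flat_\Sigma\to X^\flat_\varnothing\to 0$, and an identification of the two resulting extensions of $X^\flat_\varnothing$ by $\coker(\mc{U}_S\to\bigoplus_{v\in\Sigma}\mc{V}_v)$. You correctly flag that last step as the delicate point, but it is left as a sketch; it is precisely where the compatibility of Kummer theory with Artin reciprocity must be checked by hand, and it is the single piece of work the paper's choice of triangle makes unnecessary, since the derived Poitou--Tate isomorphism already encapsulates it. (You should also note the degenerate case $\Sigma=S_f$, where your auxiliary sequence $0\to\mc{K}_{\Sigma^c,0}\to\mc{K}_{S_f,0}\to\mc{K}_\Sigma\to 0$ fails to be right exact; there the lemma reduces to $X_{S_f}\cong Y_{S_f}$, which is just Poitou--Tate.) In short: a defensible alternative that reproves a compatibility the paper gets for free by applying duality on the other side of the cone.
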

    
\begin{proof}
	We have a long exact sequence
    	\begin{multline*} \label{eq:longer}
    		\bigoplus_{v \in \Sigma^c} \rH^1_{v,\Iw}(K,\zp(1)) \xrightarrow{} \rH^2_{S_f,\Iw}(K,\zp(1)) \to \rH^2_{\Sigma,\Iw}(K,\zp(1)) \\
		\xrightarrow{} \bigoplus_{v \in \Sigma^c} \rH^2_{v,\Iw}(K,\zp(1))
	 	\xrightarrow{} \rH^3_{S_f,\Iw}(K,\zp(1)) \xrightarrow{} \rH^3_{\Sigma,\Iw}(K,\zp(1)).
    	\end{multline*}
    By Poitou-Tate duality, the second term is $X_{S_f}$, and by Tate duality, the first term is $D_{\Sigma^c}$, and the cokernel
    of the resulting restriction map $D_{\Sigma^c} \to X_{S_f}$ is $X^\flat_{\Sigma}$.
    Via the invariant maps of local class field theory, the group $\bigoplus_{v \in \Sigma^c} \rH^2_{v,\Iw}(K,\zp(1))$ is identified with 
    $\mc{K}_{\Sigma^c}$.  Lemma \ref{lem:cohdegs} tells us that $\rH^3_{S_f,\Iw}(K,\zp(1)) = \zp$, and again by class field theory,
    the map $\mc{K}_{\Sigma^c} \to \zp$ is given by summation. 
\end{proof}

In the remainder of this section, we make the following hypothesis:

\begin{hypothesis}
\label{hyp:nicehyp}  The field $K$ contains all $p$-power roots of unity.
\end{hypothesis}

This allows us to pull twists out of our Iwasawa cohomology groups and to apply Weak Leopoldt where helpful.
One could remove this assumption with appropriate modifications, but we do not need to do so for our applications. 

\begin{remark} \label{rem:conseqD=I}
	The canonical surjection $X_{\Sigma} \twoheadrightarrow X^\flat_{\Sigma}$ is an isomorphism if $D_{\Sigma} = I_{\Sigma}$.  
	Our assumption on $K$ implies that $r_{\mf{p}} \ge 1$ for each $\mf{p} \in \Sigma^c$, so the canonical injection 
	$X_{\Sigma} \hookrightarrow Y_{\Sigma}$ has torsion cokernel which is pseudo-null if $r_{\mf{p}} \ge 2$ for each $\mf{p} \in \Sigma^c$.
\end{remark}

Using the spectral sequences of Proposition \ref{thm:ss}, we obtain the following.

\begin{proposition}
\label{prop:iwseq}
	If $\Sigma \notin \{\varnothing, S_f\}$ or $r = 1$, then there is an exact sequence
	\begin{equation} \label{iwseq}
		0 \to \rE^1(Y_{\Sigma^c})(1) \to Y_{\Sigma} \to Y_{\Sigma}^{**} \to 
		\rE^2(Y_{\Sigma^c})(1) \to 0,
	\end{equation}
 	and for $i \ge 1$, there are isomorphisms 
	\begin{equation} \label{iwisoms}
		\rE^i(Y_{\Sigma}^*) \xrightarrow{\sim} \rE^{i+2}(Y_{\Sigma^c})(1)
	\end{equation}
	of $\Omega$-modules.  If $\Sigma = S_f$, then the above statements hold upon localization
	at any prime of $\Omega$ outside the support of $\zp$, while if $\Sigma = \varnothing$, they hold
	outside the support of $\zp(1)$.
	
	More precisely, if $\Sigma = S_f$, then \eqref{iwseq} becomes exact
	upon replacing the rightmost zero by $\zp$, and the maps in \eqref{iwisoms} are isomorphisms for $i \ge 2$.
	For $i = 1$, the map in \eqref{iwisoms} is surjective with procyclic kernel unless it happens that $r = 2$
	and it is injective with finite cyclic cokernel.
\end{proposition}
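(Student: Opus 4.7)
The plan is to apply the first spectral sequence of Proposition \ref{thm:ss} with $T = \zp$, so that the convergent $\rH^{i+j}_{\Sigma, \Iw}(K, \zp(1))$ yields $Y_\Sigma$ at total degree $2$. Under Hypothesis \ref{hyp:nicehyp}, coefficient twisting gives $\rH^n_{\Sigma^c, \Iw}(K, \zp) \cong \rH^n_{\Sigma^c, \Iw}(K, \zp(1))(-1)$, and the identity $\rE^i(M(-1)) = \rE^i(M)(1)$ rewrites the $E_2$-page as $E_2^{i, j} = \rE^i(\rH^{3-j}_{\Sigma^c, \Iw}(K, \zp(1)))(1)$. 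In the main case $\Sigma \notin \{\varnothing, S_f\}$, only two rows survive: the $j = 3$ row dies because the corestriction $=$ trace on $\rH^0(G_{F',S}, \zp(1)) \cong \zp$ is multiplication by a $p$-power growing in the tower, forcing $\rH^0_{\Iw}(K, \zp(1)) = 0$ under Hypothesis \ref{hyp:nicehyp}; and the $j = 0$ row dies by Lemma \ref{lem:cohdegs} applied to $\Sigma^c$. The row $j = 1$ reads $\rE^i(Y_{\Sigma^c})(1)$ by definition, and for $j = 2$ I identify $\rH^1_{\Sigma^c, \Iw}(K, \zp(1)) \cong Y_\Sigma^*(1)$ by running the second spectral sequence of Proposition \ref{thm:ss} (with $T = \zp(1)$) at total degree $n = 1$: the only surviving $E_\infty$ contribution is $\rE^0(Y_\Sigma(-1)) = Y_\Sigma^*(1)$, since $\rH^3_{\Sigma, \Iw}(K, \zp(1)) = 0$ for $\Sigma \ne S_f$. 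After cancelling twists, this yields $E_2^{i, 2} = \rE^i(Y_\Sigma^*)$.

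With only two nonzero rows, $d_r = 0$ for $r \ge 3$, so $E_3 = E_\infty$ and only the $d_2$ differentials $d_2 \colon \rE^i(Y_\Sigma^*) \to \rE^{i+2}(Y_{\Sigma^c})(1)$ remain. The filtration of $Y_\Sigma$ at total degree $2$ reads $0 \to \rE^1(Y_{\Sigma^c})(1) \to Y_\Sigma \to \ker(d_2|_{i=0}) \to 0$, and the vanishing of $\rH^3_{\Sigma, \Iw}(K, \zp(1))$ for $\Sigma \ne S_f$ forces $\coker(d_2|_{i=0}) = E_\infty^{2, 1} = 0$, so $d_2$ is surjective onto $\rE^2(Y_{\Sigma^c})(1)$; splicing produces \eqref{iwseq}. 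For $i \ge 1$, the vanishing of $\rH^n_{\Sigma, \Iw}(K, \zp(1))$ for $n \ge 3$ forces the $d_2$ at index $i$ to be simultaneously injective (its kernel $E_\infty^{i,2}$ contributes to $\rH^{i+2}$) and surjective (its cokernel $E_\infty^{i+2,1}$ contributes to $\rH^{i+3}$), yielding the isomorphisms \eqref{iwisoms}.

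The main obstacle is the edge cases. When $\Sigma = S_f$, the nontrivial $\rH^3_{S_f, \Iw}(K, \zp(1)) \cong \zp$ sits in some combination of $E_\infty^{2, 1}$ and $E_\infty^{1, 2}$, breaking the surjectivity of $d_2$ at $i = 0$ and the isomorphism at $i = 1$ by a subquotient of $\zp$; localizing at primes outside the support of $\zp$ kills these contributions and restores the clean statements, while tracking the $\zp$ globally produces the ``more precisely'' refinement with procyclic kernel (and the $r = 2$ anomaly of a finite cyclic cokernel from $\rE^3$ being concentrated at the maximal ideal). When $\Sigma = \varnothing$, the $j = 0$ row $\rE^i(\zp)(1)$ no longer vanishes and introduces extra $d_2$s; localizing outside the support of $\zp(1)$ eliminates this row. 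When $r = 1$ the global dimension of $\Omega$ is $2$, so $\rE^i = 0$ for $i \ge 2$ automatically, collapsing the potentially offending correction terms into degrees where they vanish and obviating localization even for $\Sigma \in \{\varnothing, S_f\}$. Tracking the Tate twists consistently through both spectral sequences to land on the clean identification $E_2^{i, 2} = \rE^i(Y_\Sigma^*)$ is the main technical nuisance.
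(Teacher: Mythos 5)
Your argument for the generic case $\Sigma \notin \{\varnothing, S_f\}$ is correct and tracks the paper's own proof: you use the first spectral sequence of Proposition \ref{thm:ss}, kill the $j = 0$ and $j = 3$ rows, identify the $j = 2$ row as $\rE^i(Y_\Sigma^*)$ via the second spectral sequence, and read off \eqref{iwseq} and \eqref{iwisoms} from the differentials. Carrying the Tate twists through the coefficients is a cosmetic variation, not a change of method.

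However, there is a genuine gap in the $\Sigma = S_f$ edge case, precisely where the proposition makes its ``more precisely'' claims. The problem is that the identification underlying your $j = 2$ row, namely $\rH^1_{\Sigma^c,\Iw}(K,\zp(1)) \cong Y_\Sigma^*(1)$, was obtained by running the second spectral sequence and using $\rH^3_{\Sigma,\Iw}(K,\zp(1)) = 0$. When $\Sigma = S_f$ this vanishing fails: $\rH^3_{S_f,\Iw}(K,\zp(1)) \cong \zp$, so the second spectral sequence acquires a nonzero $j = 0$ row $\rE^i(\zp(-1))$, and $\rH^1_{\Iw}(K,\zp)$ is no longer isomorphic to $Y_{S_f}^*$. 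The paper handles exactly this with the four-term sequence $0 \to \zp^{\delta_{r,1}} \to \rH^1_{\Iw}(K,\zp) \to Y_{S_f}^* \to \zp^{\delta_{r,2}}$ (their \eqref{problemseq}), and then must split into subcases $r \ge 3$, $r = 2$, $r = 1$; the $r = 2$ subcase requires a further snake-lemma argument comparing two exact sequences involving $\rE^1(U)$ to establish the ``injective with finite cyclic cokernel'' alternative. Your proposal attributes the procyclic kernel/finite cokernel purely to the $\zp$ appearing in the abutment and waves at localizing; it never accounts for the failure of the $\rH^1_{\Iw}(K,\zp) \cong Y_{S_f}^*$ identification, so the refined statement at $i = 1$ is not actually proved.

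Two smaller points. First, for $r = 1$ the ring $\Omega$ has Krull dimension $2$, so $\rE^i$ vanishes for $i \ge 3$, not $i \ge 2$ as you state; the conclusion you want still holds (the $j = 0$ row $\rE^i(\zp(-1))$ is then concentrated at $i = 1$ and does not feed into the relevant degrees), but the stated vanishing range is wrong, and in the $\Sigma = S_f$, $r = 1$ subcase you still need the corrected relationship between $\rH^1_{\Iw}(K,\zp)$ and $Y_{S_f}^*$ rather than a dimension count. Second, the sentence asserting $\rH^0(G_{F',S},\zp(1)) \cong \zp$ with corestriction given by a growing $p$-power is loose; the cleanest justification for killing the $j = 3$ row is simply that $\rH^0_{\Iw}(K,\zp) = 0$ because $\Gamma$ is infinite, as recorded in the proof of Lemma \ref{lem:cohdegs}.
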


\begin{proof} 
	Let us first suppose that $\Sigma \notin \{\varnothing,S_f\}$. Consider the spectral sequence 
	$\rF_2^{i,j}(\zp) \Rightarrow \rF^{i+j}(\zp)$ of Proposition \ref{thm:ss}.  By Lemma \ref{lem:cohdegs}
	and the fact that $\Sigma \neq \varnothing$ (resp., $\Sigma \neq S_f$), 
	we have $\rF_2^{i,j}(\zp) = 0$ unless $j \in \{1,2\}$ (resp., $\rF^k(\zp) = 0$ unless $k \in \{1,2\}$).
	The spectral sequence then yields an exact 
	sequence of base terms
	\begin{equation*} \label{ptseq}
		0 \to \rE^1(\rH^2_{\Sigma^c,\Iw}(K,\zp)) \to Y_{\Sigma}
		\to \rH^1_{\Sigma^c,\Iw}(K,\zp)^* \to \rE^2(\rH^2_{\Sigma^c,\Iw}(K,\zp))
		\to 0
	\end{equation*}
	and isomorphisms
	$$
		\rE^i(\rH^1_{\Sigma^c,\Iw}(K,\zp)) \cong \rE^{i+2}(\rH^2_{\Sigma^c,\Iw}(K,\zp))
	$$
	of $\Omega$-modules for $i \ge 1$.  We then obtain our results by applying two isomorphisms: the first 
	\begin{equation*} \label{YSigmadual}
		\rH^1_{\Sigma^c,\Iw}(K,\zp) \cong Y_{\Sigma}^*
	\end{equation*}
	arises from spectral sequence $\rH_2^{i,j}(\zp) \Rightarrow \rH^{i+j}(\zp)$ of Proposition \ref{thm:ss} 
	by the vanishing of the terms $\rH^{i,0}(\zp)$ that occurs since $\Sigma \neq S_f$, and the second
	$$
		\rH^2_{\Sigma^c,\Iw}(K,\zp) \cong Y_{\Sigma^c}(-1)
	$$ 
	follows by our assumption that $K$ contains all $p$-power roots of unity.
	
	If $\Sigma = \varnothing$, then we have 
	$$
		\rF_2^{i,0}(\zp) \cong \rE^i(H^3_{S_f}(K,\zp)) \cong \rE^i(\zp(-1)) \cong \zp(1)^{\delta_{i,r}}
	$$  
	by \cite[Cor.~A.13]{BCGKPST}.
	The above arguments go through so long as we localize all terms at a prime of $\Omega$ outside of the support of $\zp(1)$,
	as well as if $r = 1$.

	For the more precise statements for $\Sigma = S_f$, we can use the results of \cite{BCGKPST}, as we explain.
	Set $U = \rH^1_{\Iw}(K,\zp)$ for brevity of notation.
	As in the proof of \cite[Cor.~4.1.6]{BCGKPST}, we have an exact sequence 
	\begin{equation} \label{ptseqSf}
		0 \to \rE^1(Y_{\varnothing})(1) \to Y_{S_f} \to Y_{S_f}^{**} \to \rE^2(Y_{\varnothing})(1) \to \zp \to 
		\rE^1(U) \to
		\rE^3(Y_{\varnothing})(1) \to 0
	\end{equation}
	and isomorphisms $\rE^i(U) \xrightarrow{\sim} \rE^{i+2}(Y_{\varnothing})$ for $i \ge 2$.  As in \cite[Thm.~4.1.2]{BCGKPST}, 
	we also have an exact sequence
	\begin{equation} \label{problemseq}
		0 \to \zp^{\delta_{r,1}} \to U \to Y_{S_f}^* \to \zp^{\delta_{r,2}}. 
	\end{equation}
	If $r \ge 3$, or if $r = 2$ and the map $Y_{S_f}^* \to \zp$ is zero, 
	we can substitute in the resulting isomorphism $U \cong Y_{S_f}^*$ to give the result.
	If $r \in \{1,2\}$, then the maps $\rE^i(Y_{S_f}^*) \to \rE^{i+2}(Y_{\varnothing})(1)$ are of trivial groups for $i \ge r$ (see
	\cite[Cor.~A.9]{BCGKPST}). 
	For $r = 1$, this implies that the map $\rE^1(U) \to \zp$ given by taking $\Ext$-groups of \eqref{problemseq} is an isomorphism,
	forcing the map $\zp \to \rE^1(U)$ in \eqref{ptseqSf} to also be an isomorphism, hence the result.
	
	Finally, suppose that $r = 2$ and the map $Y_{S_f}^* \to \zp$ of \eqref{problemseq} is nontrivial, hence has image isomorphic to $\zp$.
	Taking $\Ext$-groups, we then have an exact sequence of the form
	$$
		0 \to \rE^1(Y_{S_f}^*) \to \rE^1(U) \to \zp \to 0
	$$
	in which the first term is finite (again by \cite[Cor.~A.9]{BCGKPST}).  Since $\rE^3(Y_{\varnothing})$ is finite as well, 
	it follows that the map $\zp \to \rE^1(U)$
	in \eqref{ptseqSf} must be injective, and so we also have an exact sequence
	$$
		0 \to \zp \to \rE^1(U) \to \rE^3(Y_{\varnothing})(1) \to 0.
	$$
	From these two sequences and a simple application of the snake lemma, we obtain that the composite map $\rE^1(Y_{S_f}^*)
	\to \rE^3(Y_{\varnothing})(1)$ is injective with finite cokernel.
\end{proof}

\begin{corollary} \label{cor:extvanish}
	Suppose that $Y_{\Sigma^c}$ is torsion and $\Sigma \neq S_f$.  Then 
	$\rE^1(Y_{\Sigma})(1) \cong Y_{\Sigma^c}$, and $\rE^i(Y_{\Sigma})(1)$ is zero for all $i \ge 2$.
\end{corollary}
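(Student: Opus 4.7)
The plan is to apply Proposition \ref{prop:iwseq} after swapping the roles of $\Sigma$ and $\Sigma^c$. Since $\Sigma \neq S_f$ forces $\Sigma^c \neq \varnothing$, the hypothesis $\Sigma^c \notin \{\varnothing, S_f\}$ of that proposition is satisfied whenever $\Sigma \neq \varnothing$; the degenerate case $\Sigma = \varnothing$ would instead fall under the sharper ``$\Sigma = S_f$'' form of the proposition. In the generic situation this produces the four-term exact sequence
\begin{equation*}
0 \to \rE^1(Y_\Sigma)(1) \to Y_{\Sigma^c} \to Y_{\Sigma^c}^{**} \to \rE^2(Y_\Sigma)(1) \to 0
\end{equation*}
together with isomorphisms $\rE^i(Y_{\Sigma^c}^*) \xrightarrow{\sim} \rE^{i+2}(Y_\Sigma)(1)$ of $\Omega$-modules for each $i \ge 1$.

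The next step is to exploit the torsion hypothesis on $Y_{\Sigma^c}$ to collapse the right half of this sequence. Since $\Omega = \zp\ps{\mc{G}}$ is a finite free module over the integral domain $\zp\ps{\Gamma}$ and the involution $\iota$ preserves $\Omega$-torsion, every $\Omega$-linear map $Y_{\Sigma^c}^{\iota} \to \Omega$ is in particular $\zp\ps{\Gamma}$-linear from a torsion module into a torsion-free module, and hence vanishes. Consequently $Y_{\Sigma^c}^* = 0$, and a fortiori $Y_{\Sigma^c}^{**} = 0$.

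Substituting these vanishings back into the four-term sequence immediately collapses it to $\rE^1(Y_\Sigma)(1) \cong Y_{\Sigma^c}$ and $\rE^2(Y_\Sigma)(1) = 0$, while applying the isomorphisms $\rE^i(Y_{\Sigma^c}^*) \xrightarrow{\sim} \rE^{i+2}(Y_\Sigma)(1)$ with $Y_{\Sigma^c}^* = 0$ yields $\rE^j(Y_\Sigma)(1) = 0$ for every $j \ge 3$. The conceptual content is simply that both the single and double $\Omega$-duals of a torsion module vanish, so the two pieces of Proposition \ref{prop:iwseq} combine to give exactly the statement.

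The only real obstacle I anticipate is the boundary case $\Sigma = \varnothing$ with $r \ge 2$. There one must invoke the sharper ``$\Sigma = S_f$'' form of Proposition \ref{prop:iwseq}, whose exact sequence acquires additional trailing terms $\zp$ and $\rE^1(\rH^1_{\Iw}(K,\zp))$, and the task is to show that the resulting connecting map out of $\rE^2(Y_\varnothing)(1)$ is zero. Here one should use that $Y_{S_f}^* = 0$, combined with the auxiliary sequence for $\rH^1_{\Iw}(K,\zp)$ that appears in the proof of the proposition, to trivialize the obstructing terms; after that bookkeeping the argument closes in parallel to the generic case above.
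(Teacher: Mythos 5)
Your core argument in the generic case is exactly the paper's proof: apply Proposition \ref{prop:iwseq} with $\Sigma$ and $\Sigma^c$ swapped, observe that a torsion $Y_{\Sigma^c}$ has vanishing dual $Y_{\Sigma^c}^* = 0$ (and hence vanishing double dual), and read the conclusion off the four-term exact sequence and the degree-shift isomorphisms. Your justification for $Y_{\Sigma^c}^* = 0$ (a $\zp\ps{\Gamma}$-linear map from a torsion module to a torsion-free module is zero) is correct.

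However, the boundary case you flag in your last paragraph cannot occur, and the paper dismisses it in one line: if $\Sigma = \varnothing$, then $\Sigma^c = S_f$, but $Y_{S_f} = X_{S_f}$ has positive $\zp\ps{\Gamma}$-rank, so it is not torsion, contradicting the hypothesis of the corollary. Thus $\Sigma \notin \{\varnothing, S_f\}$ and the clean form of Proposition \ref{prop:iwseq} applies. Your proposed workaround via the sharper ``$\Sigma = S_f$'' statement is therefore unnecessary — and in fact would not close: if you run the $\Sigma^c = S_f$ version of the exact sequence with the (impossible) assumption that $Y_{S_f}$ is torsion, the five-term sequence ends in a $\zp$, so you only learn that $\rE^2(Y_{\varnothing})(1)$ embeds in $\zp$, not that it vanishes; and the degree-shift isomorphisms are weakened at $i=1$. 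So rather than an extra bookkeeping step, you should simply note that the hypotheses exclude $\Sigma = \varnothing$ outright.
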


\begin{proof}
	We apply Proposition \ref{prop:iwseq} with $\Sigma$ and $\Sigma^c$ reversed. 
	Note that $\Sigma \neq \varnothing$, since $Y_{S_f} = X_{S_f}$ has nonzero $\zp\ps{\Gamma}$-rank.  
	As $Y_{\Sigma^c}$ is torsion, we have $Y_{\Sigma^c}^* = 0$,
	so $\rE^i(Y_{\Sigma^c}^*) = 0$ for all $i \ge 0$, and the isomorphisms of \eqref{iwisoms} tell us that $\rE^i(Y_{\Sigma}) = 0$ for all
	$i \ge 3$.  Since $Y_{\Sigma^c}^{**} = 0$, the exact sequence \eqref{iwseq} gives the remaining statements.
\end{proof}

\begin{remark} \label{rem:extvanish}
	The result of Corollary \ref{cor:extvanish} remains true for $\Sigma = S_f$ after localization at a prime
	away from the support of $\zp(1)$ (and without localization if $r = 1$), as follows by Proposition \ref{prop:iwseq}.
\end{remark}

Let us set 
\begin{equation} \label{eq:Z}
	Z_{\Sigma} = \begin{cases} \zp & \text{if } \Sigma = S_f \text{ and } r \ge 2, \\ 0 & \text{otherwise}. \end{cases}
\end{equation}  
  
\begin{theorem} \label{thm:free}
	Let $\mf{q}$ be a prime ideal of $\Omega$ outside of the support of $Y_{\Sigma^c}^{\iota}(1) \oplus Z_{\Sigma}$.
	Then the localization $Y_{\Sigma,\mf{q}}$ of $Y_{\Sigma}$ at $\mf{q}$ is a free $\Omega_{\mf{q}}$-module.
\end{theorem}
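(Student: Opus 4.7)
The strategy is to apply Proposition \ref{prop:iwseq} after localization at $\mf{q}$ to reduce $Y_{\Sigma,\mf{q}}$ to the reflexive module $(Y_{\Sigma}^{**})_{\mf{q}}$, and then to prove freeness of the latter via a classical $\Ext$-vanishing criterion over the regular local ring $\Omega_{\mf{q}}$. The role of the hypothesis on $\mf{q}$ is precisely to make all the exceptional terms in the various forms of Proposition \ref{prop:iwseq} disappear upon localization.

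First, I would establish the elementary containment $\mr{supp}(\rE^i(M)(1)) \subseteq \mr{supp}(M^{\iota}(1))$ for any finitely generated $\Omega$-module $M$: the support of $\Ext^i_\Omega(M^{\iota},\Omega)$ lies inside $\mr{supp}(M^{\iota})$ since $\Ext$ commutes with flat localization, and twisting by $(1)$ transports supports under the cyclotomic automorphism of $\mr{Spec}\,\Omega$. Applied to $M = Y_{\Sigma^c}$, the assumption $\mf{q} \notin \mr{supp}(Y_{\Sigma^c}^{\iota}(1))$ gives $\rE^i(Y_{\Sigma^c})(1)_{\mf{q}} = 0$ for every $i \ge 0$. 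Together with $\mf{q} \notin \mr{supp}(Z_{\Sigma})$, this removes the exceptional $\zp$-term appearing in the refined form of Proposition \ref{prop:iwseq} when $\Sigma = S_f$ and $r \ge 2$; the case $\Sigma = \varnothing$ is vacuous since $Y_{S_f}$ has positive $\Omega$-rank, so $\mr{supp}(Y_{S_f}^{\iota}(1)) = \mr{Spec}\,\Omega$. Localizing \eqref{iwseq} at $\mf{q}$ therefore collapses to an isomorphism
$$
Y_{\Sigma,\mf{q}} \xrightarrow{\sim} (Y_{\Sigma}^{**})_{\mf{q}} = \Hom_{\Omega_{\mf{q}}}\bigl(((Y_{\Sigma}^*)^{\iota})_{\mf{q}},\Omega_{\mf{q}}\bigr),
$$
while localizing \eqref{iwisoms} yields $\Ext^i_{\Omega_{\mf{q}}}(((Y_{\Sigma}^*)^{\iota})_{\mf{q}},\Omega_{\mf{q}}) = 0$ for all $i \ge 1$.

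The final step is to invoke the standard fact that over a regular local ring $R$, a finitely generated module $N$ with $\Ext^i_R(N,R) = 0$ for all $i \ge 1$ is free: Auslander--Buchsbaum gives $\mr{pd}(N) + \mr{depth}(N) = \mr{depth}(R)$, and local duality translates the $\Ext$-vanishing into $\mr{depth}(N) = \mr{depth}(R)$, whence $\mr{pd}(N) = 0$. Since $\mc{G}$ is abelian and $|\Delta|$ is prime to $p$, $\Omega$ decomposes as a finite product of regular local rings of the form $R_i\ps{\Gamma}$ with $R_i$ a finite unramified extension of $\zp$, so $\Omega_{\mf{q}}$ is regular local for every prime $\mf{q}$. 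The criterion applied to $N = ((Y_{\Sigma}^*)^{\iota})_{\mf{q}}$ shows it is $\Omega_{\mf{q}}$-free, whence its $\Omega_{\mf{q}}$-dual $Y_{\Sigma,\mf{q}}$ is also free.

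The main technical obstacle is the bookkeeping tied to the exceptional cases in Proposition \ref{prop:iwseq}: one must check that the $\zp$-term appearing in the refined sequence when $\Sigma = S_f$ and $r \ge 2$ is precisely what the summand $Z_{\Sigma}$ in the hypothesis removes, and that the $\zp(1)$-term arising when $\Sigma = \varnothing$ falls within $\mr{supp}(Y_{\Sigma^c}^{\iota}(1))$ (trivially, since the hypothesis is then empty).
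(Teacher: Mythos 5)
Your proof is correct and closely parallels the paper's, but takes a small detour. The paper's own argument is more direct: it uses Corollary \ref{cor:extvanish} (and Remark \ref{rem:extvanish} for $\Sigma = S_f$) to identify $\Ext^1_{\Omega_{\mf{q}}}(Y_{\Sigma,\mf{q}},\Omega_{\mf{q}}) \cong (Y_{\Sigma^c}^{\iota}(1))_{\mf{q}} = 0$ and to see that $\Ext^i_{\Omega_{\mf{q}}}(Y_{\Sigma,\mf{q}},\Omega_{\mf{q}}) = 0$ for all $i \ge 2$, then immediately invokes the criterion (citing Auslander--Bridger~(4.12)) that a finitely generated module over a regular local ring with vanishing $\Ext^i(-,R)$ for all $i \ge 1$ is free. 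You instead use the localized exact sequence \eqref{iwseq} to show $Y_{\Sigma,\mf{q}} \cong (Y_{\Sigma}^{**})_{\mf{q}}$, apply the $\Ext$-vanishing criterion to the dual module $((Y_{\Sigma}^*)^{\iota})_{\mf{q}}$ via \eqref{iwisoms}, and then pass back to $Y_{\Sigma,\mf{q}}$ as the $\Omega_{\mf{q}}$-dual of a free module. This is an extra intermediate step, but it is valid, and it has a modest virtue: by working directly from Proposition~\ref{prop:iwseq} you avoid the torsion hypothesis on $Y_{\Sigma^c}$ that appears in Corollary~\ref{cor:extvanish} but is not actually assumed in the theorem statement. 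Your observation that the $\Sigma = \varnothing$ case is vacuous in the relevant situations (where $Y_{S_f}$ has positive rank on every component of $\operatorname{Spec}\Omega$) is the right way to dispose of the remaining boundary case, which the paper's proof leaves implicit.

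One cosmetic point: your freeness criterion is the standard one and your justification via Auslander--Buchsbaum and local duality is sound, but over a regular local ring one can also argue directly that vanishing of all $\Ext^i(N,R)$, $i\ge 1$, forces a finite free resolution of $N$ to be split, which is how Auslander--Bridger~(4.12) phrases it; citing that reference is enough.
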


\begin{proof}
	By Corollary \ref{cor:extvanish} and Remark \ref{rem:extvanish}, we have
	$$
		\Ext^1_{\Omega_{\mf{q}}}(Y_{\Sigma,\mf{q}},\Omega_{\mf{q}}) \cong (\rE^1(Y_{\Sigma})^{\iota})_{\mf{q}} \cong
		(Y_{\Sigma^c}(-1)^{\iota})_{\mf{q}} \cong (Y_{\Sigma^c}^{\iota}(1))_{\mf{q}} = 0,
	$$
	and
	$\Ext^i_{\Omega_{\mf{q}}}(Y_{\Sigma,\mf{q}},\Omega_{\mf{q}}) = 0$ for all $i \ge 2$.  
	Since $Y_{\Sigma,\mf{q}}$ is a finitely generated module over the regular local ring $\Omega_{\mf{q}}$ with vanishing 
	higher Ext-groups to $\Omega_{\mf{q}}$, it is free (cf. \cite[(4.12)]{AB}).
\end{proof}

\begin{proposition} \label{prop:localseq} 
	For any nonempty subset $P$ of $\Sigma$, we have a map of exact sequences 
	$$
		\SelectTips{cm}{} \xymatrix{
		0 \ar[r] & \rE^1(\mc{K}_P)(1) \ar[r] \ar[d] &
		D_P  \ar[r] \ar[d] & D_P^{**} \ar[r] \ar[d] & 
		\rE^2(\mc{K}_P)(1) \ar[d] \ar[r] & 0 &\\
		0 \ar[r] & \rE^1(Y_{\Sigma^c})(1) \ar[r] & Y_{\Sigma} \ar[r] & Y_{\Sigma}^{**} \ar[r] & 
		 \rE^2(Y_{\Sigma^c})(1) \ar[r] & Z_{\Sigma}
		}
	$$
	of $\Omega$-modules in which the vertical maps are the canonical ones.  If the primes of $K$ over each $\mf{p} \in P$ 
	have infinite residue field degree, then $D_P = I_P$ and $\rE^1(\mc{K}_P) = 0$.
\end{proposition}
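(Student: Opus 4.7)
The plan is to obtain the top exact sequence as a local analog (per prime $\mf{p} \in P$, then summed) of the sequence \eqref{iwseq}, by exploiting local Tate duality at each $\mf{p}$, and to produce the vertical maps via functoriality of the spectral sequences and of the defining triangle for $\RGa_{\Sigma,\Iw}(K,\zp(1))$. Local Tate duality in Iwasawa form provides quasi-isomorphisms
$$\RGa_{\mf{p},\Iw}(K,\zp(1)) \simeq \RHom_{\Omega}(\RGa_{\mf{p},\Iw}(K,\zp)^{\iota},\Omega)[-2]$$
and its partner with $\zp$ and $\zp(1)$ interchanged. Under Hypothesis \ref{hyp:nicehyp}, $\rH^0_{\mf{p},\Iw}(K,\zp)=0$ (since $\Gamma_{\mf{p}}$ is infinite: the cyclotomic $\zp$-extension of $F$ is contained in $K$ and is totally ramified at $\mf{p}$, so $r_{\mf{p}}\ge 1$), local class field theory gives $\rH^2_{\mf{p},\Iw}(K,\zp(1))\cong \mc{K}_{\mf{p}}$, and hence $\rH^2_{\mf{p},\Iw}(K,\zp)\cong \mc{K}_{\mf{p}}(-1)$. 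In particular, $\mc{K}_{\mf{p}}$ is $\Omega$-torsion.

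Setting $A_{\mf{p}} = \rH^1_{\mf{p},\Iw}(K,\zp)$, I first apply the dual spectral sequence (abutting to $A_{\mf{p}}$) to produce the exact sequence $0 \to \rE^1(\mc{K}_{\mf{p}}) \to A_{\mf{p}} \to D_{\mf{p}}^* \to 0$; the relevant $d_2$ vanishes because $D_{\mf{p}}^*$ is $\Omega$-torsion-free while $\rE^2(\mc{K}_{\mf{p}})$ is $\Omega$-torsion. Applying $\Hom_{\Omega}((-)^{\iota},\Omega)$ and using that $(\rE^1(\mc{K}_{\mf{p}}))^*=0$ (by Remark \ref{EKp} this module is either $0$ or $\mc{K}_{\mf{p}}^{\iota}$, hence torsion with zero dual) yields a canonical identification $A_{\mf{p}}^* \cong D_{\mf{p}}^{**}$. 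The main hypercohomology spectral sequence (abutting to $D_{\mf{p}} = \rH^1_{\mf{p},\Iw}(K,\zp(1))$) then gives a $5$-term exact sequence
$$0 \to \rE^1(\mc{K}_{\mf{p}})(1) \to D_{\mf{p}} \to D_{\mf{p}}^{**} \to \rE^2(\mc{K}_{\mf{p}})(1) \to \mc{K}_{\mf{p}} \to \cdots,$$
and one verifies that the last displayed map vanishes by comparing the induced filtration on $\mc{K}_{\mf{p}}\cong \rH^2_{\mf{p},\Iw}(K,\zp(1))$ with its direct description via local invariants; this truncation yields the desired 4-term sequence. Summing over $\mf{p}\in P$ produces the top row.

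The canonical vertical maps arise from functoriality: $D_P \to Y_{\Sigma}$ comes from the defining triangle of $\RGa_{\Sigma,\Iw}(K,\zp(1))$ via inclusion $P\subseteq \Sigma$, the map $D_P^{**} \to Y_{\Sigma}^{**}$ is induced, and the outer vertical maps arise from the spectral sequences applied to the natural composition $Y_{\Sigma^c}\twoheadrightarrow \mc{K}_{\Sigma,0} \hookrightarrow \mc{K}_{\Sigma} \twoheadrightarrow \mc{K}_P$ supplied by Lemma \ref{lem:needlater}. For the final assertion, if the residue field of $K$ at a prime above $\mf{p}\in P$ is infinite, then since $K/F$ is pro-$p$ this residue extension exhausts the pro-$p$ unramified extensions of $K_{\mf{p}}$, so $\mf{D}_{\mf{p}}=\mf{I}_{\mf{p}}$ and hence $D_{\mf{p}}=I_{\mf{p}}$; moreover Hypothesis \ref{hyp:nicehyp} supplies a totally ramified $\zp$ (the cyclotomic $\zp$-extension) inside $\Gamma_{\mf{p}}$, while the infinite residue field gives an independent Frobenius $\zp$, so $r_{\mf{p}}\ge 2$ and Remark \ref{EKp} yields $\rE^1(\mc{K}_{\mf{p}})=0$. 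The main obstacle is the truncation step producing a clean exact $4$-term sequence, which requires careful spectral-sequence bookkeeping to confirm that the connecting map $\rE^2(\mc{K}_{\mf{p}})(1)\to \mc{K}_{\mf{p}}$ is zero.
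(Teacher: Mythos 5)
Your general strategy---deriving the top row from the two directions of Iwasawa-theoretic local Tate duality and then constructing the vertical arrows via functoriality---is reasonable, and your treatment of the final assertion (that infinite residue field degree forces $\mf{D}_{\mf{p}}=\mf{I}_{\mf{p}}$ and that $r_{\mf{p}}\ge 2$, hence $\rE^1(\mc{K}_{\mf{p}})=0$ by Remark \ref{EKp}) is correct and matches the paper. However, there is a genuine error in the derivation of the top row.

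You claim the $d_2$ differential $D_{\mf{p}}^* \to \rE^2(\mc{K}_{\mf{p}})$ in the first spectral sequence vanishes ``because $D_{\mf{p}}^*$ is $\Omega$-torsion-free while $\rE^2(\mc{K}_{\mf{p}})$ is $\Omega$-torsion.'' A homomorphism from a torsion-free module to a torsion module is of course not automatically zero (consider $\Omega \to \Omega/(f)$), and in fact this particular $d_2$ is a nonzero \emph{surjection} whenever $r_{\mf{p}}=2$: as recorded in Lemma \ref{lem:localexts}(ii), one has a short exact sequence $0 \to D_{\mf{p}}(-1) \to D_{\mf{p}}^* \to \mc{K}_{\mf{p}}^{\iota} \to 0$ with $\mc{K}_{\mf{p}}^{\iota} = \rE^2(\mc{K}_{\mf{p}})$, so your claimed three-term exact sequence $0 \to \rE^1(\mc{K}_{\mf{p}}) \to A_{\mf{p}} \to D_{\mf{p}}^* \to 0$ is false in this case. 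The conclusion $A_{\mf{p}}^* \cong D_{\mf{p}}^{**}$ you want is still true, but the route to it must be repaired: replace the third term by the image $I$ of $A_{\mf{p}}$ in $D_{\mf{p}}^*$, apply $(-)^*$ to $0 \to \rE^1(\mc{K}_{\mf{p}}) \to A_{\mf{p}} \to I \to 0$ to get $A_{\mf{p}}^* \cong I^*$, and then apply $(-)^*$ to $0 \to I \to D_{\mf{p}}^* \to \rE^2(\mc{K}_{\mf{p}}) \to 0$ using both $(\rE^2(\mc{K}_{\mf{p}}))^*=0$ and $\rE^1(\rE^2(\mc{K}_{\mf{p}})^{\iota})=0$ (the latter because $\rE^2(\mc{K}_{\mf{p}})$ is either zero or isomorphic to $\mc{K}_{\mf{p}}^{\iota}$ with $r_{\mf{p}}=2$, hence $\rE^1$ of it vanishes by Remark \ref{EKp}). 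Separately, your step claiming the map $\rE^2(\mc{K}_{\mf{p}})(1) \to \mc{K}_{\mf{p}}$ vanishes is only gestured at; this is a genuine argument (it is the point where the paper invokes \cite[Thm.~4.1.14]{BCGKPST}), and ``comparing the induced filtration with its direct description via local invariants'' needs to be made precise before it can be accepted. Finally, you should verify that the map $D_{\mf{p}} \to A_{\mf{p}}^*$ arising as an edge map, composed with the identification $A_{\mf{p}}^* \cong D_{\mf{p}}^{**}$, is actually the biduality map, and you should construct the vertical arrows as a genuine map of spectral sequences (coming from the morphism $\bigoplus_{v\in P}\RGa_{v,\Iw}(K,\zp(1))[-1] \to \RGa_{\Sigma,\Iw}(K,\zp(1))$ in the defining triangle) rather than by assembling them ad hoc; the paper handles both the top row and the compatibility by citing \cite[Thm.~4.1.14]{BCGKPST} precisely to avoid these pitfalls.
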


\begin{proof} The exactness of the lower sequence was shown in Proposition
\ref{prop:iwseq}.  The exactness of the upper sequence is shown in \cite[Thm.~4.1.14]{BCGKPST} via the spectral sequence of 
derived Tate duality (see \eqref{Tate} below), and the map of exact sequences from the corresponding map of spectral sequences.  
That $D_P = I_P$ 
is \cite[Lem.~4.2.2]{BCGKPST}, and $\rE^1(\mc{K}_P) = 0$ follows from Remark \ref{EKp} and $r_{\mf{p}} \ge 2$ (since $K$
is assumed to contain all $p$-power roots of unity and its completion at $\mf{p}$ to contain the unramified $\zp$-extension).
\end{proof}

Let us refine the above result in the local setting.

\begin{lemma} \label{lem:localexts}
	Let $\mf{p} \in S_f$.  
	The $\Omega$-module $D_{\mf{p}}$ has rank $d_{\mf{p}} = [E_{\mf{p}}:\mathbb{Q}_p]$.
	We have $\rE^i(D_{\mf{p}}) = 0$ unless $i \in \{0,1,r_{\mf{p}}-2\}$.  Moreover, the following statements hold.
	\begin{enumerate}
		\item[(i)] If $r_{\mf{p}} = 1$, then $D_{\mf{p}}^* \cong D_{\mf{p}}^{**}(-1)$ is
		$\Omega$-free 
		and fits in an exact sequence
		$$
			0 \to \mc{K}_{\mf{p}}^{\iota} \to D_{\mf{p}}(-1) \to D_{\mf{p}}^* \to 0,
		$$
		and $\rE^1(D_{\mf{p}}) \cong \mc{K}_{\mf{p}}(-1)$.
		\item[(ii)] If $r_{\mf{p}} = 2$, then $D_{\mf{p}}^* \cong D_{\mf{p}}^{**}(-1)$ 
		is $\Omega$-free and fits in an exact sequence
		$$
			0 \to D_{\mf{p}}(-1) \to D_{\mf{p}}^* \to \mc{K}_{\mf{p}}^{\iota} \to 0,
		$$
		and $\rE^1(D_{\mf{p}}) \cong \mc{K}_{\mf{p}}(-1)$.
		\item[(iii)] If $r_{\mf{p}} = 3$, then $D_{\mf{p}}^* \cong D_{\mf{p}}(-1)$, and
		there is an exact sequence
		$$
			0 \to \mc{K}_{\mf{p}}(-1) \to \rE^1(D_{\mf{p}}) \to \mc{K}_{\mf{p}}^{\iota} \to 0.
		$$
		\item[(iv)] If $r_{\mf{p}} \ge 4$, then $D_{\mf{p}}^* \cong D_{\mf{p}}(-1)$, 
		$\rE^1(D_{\mf{p}}) \cong \mc{K}_{\mf{p}}(-1)$, and 
		$\rE^{r_{\mf{p}}-2}(D_{\mf{p}}) \cong \mc{K}_{\mf{p}}^{\iota}$.
	\end{enumerate}
\end{lemma}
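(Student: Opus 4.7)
The plan is to combine the four-term exact sequence from Proposition \ref{prop:localseq} (applied to $P=\{\mathfrak{p}\}$) with the explicit computation of $\rE^j(\mc{K}_{\mathfrak{p}})$ in Remark \ref{EKp}, and then feed in a dash of local Tate duality to identify $D_{\mathfrak{p}}^*$ and compute the higher $\rE^i(D_{\mathfrak{p}})$.

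For the rank, I would invoke local class field theory to identify $\mathfrak{D}_{\mathfrak{p}}$ with the pro-$p$ completion of $K_{\mathfrak{p}}^\times$. The $p$-adic logarithm shows this is commensurable with $\mathcal{O}_{K_{\mathfrak{p}}}$ as a $\zp\ps{\mc{G}_{\mathfrak{p}}}$-module, and the normal basis theorem for local fields gives $\zp\ps{\mc{G}_{\mathfrak{p}}}$-rank $d_{\mathfrak{p}}$. Base changing to $\Omega$ then yields $\rank_{\Omega} D_{\mathfrak{p}} = d_{\mathfrak{p}}$.

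Next, feeding Remark \ref{EKp} into the four-term sequence from Proposition \ref{prop:localseq},
$$0 \to \rE^1(\mc{K}_{\mathfrak{p}})(1) \to D_{\mathfrak{p}} \to D_{\mathfrak{p}}^{**} \to \rE^2(\mc{K}_{\mathfrak{p}})(1) \to 0,$$
produces in case $r_{\mathfrak{p}}=1$ the sequence $0 \to \mc{K}_{\mathfrak{p}}^\iota(1) \to D_{\mathfrak{p}} \to D_{\mathfrak{p}}^{**} \to 0$; in case $r_{\mathfrak{p}}=2$ the sequence $0 \to D_{\mathfrak{p}} \to D_{\mathfrak{p}}^{**} \to \mc{K}_{\mathfrak{p}}^\iota(1) \to 0$; and in cases $r_{\mathfrak{p}}\ge 3$ the isomorphism $D_{\mathfrak{p}} \cong D_{\mathfrak{p}}^{**}$. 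The identification $D_{\mathfrak{p}}^* \cong D_{\mathfrak{p}}^{**}(-1)$ (respectively $D_{\mathfrak{p}}^* \cong D_{\mathfrak{p}}(-1)$ when $r_{\mathfrak{p}}\ge 3$) I would extract from local Iwasawa-theoretic Tate duality in the form $\RGa_{\mathfrak{p},\Iw}(K,\zp(1))^\iota \cong \RHom_\Omega(\RGa_{\mathfrak{p},\Iw}(K,\zp),\Omega)[-2]$, applied together with the identification $\rH^1_{\mathfrak{p},\Iw}(K,\zp) \cong D_{\mathfrak{p}}^*(-1)^\iota$ (a continuous-character interpretation). Twisting the above four-term sequences by $(-1)$ and substituting yields cases (i)--(iv) of the lemma; freeness of $D_{\mathfrak{p}}^*$ in cases (i)--(ii) then follows because it appears as the first dual $(D_{\mathfrak{p}}^\iota)^{*}$ of a module of $\Omega$-rank $d_{\mathfrak{p}}$ with $\rE^1(D_{\mathfrak{p}}^*)=0$ (by the spectral sequence below).

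For the $\rE^i$ computation, I would dualize the same local Tate-duality isomorphism to get a spectral sequence
$$\rE^i\bigl(\rH^{2-j}_{\mathfrak{p},\Iw}(K,\zp(1))\bigr) \Rightarrow \rH^{i+j}_{\mathfrak{p},\Iw}(K,\zp)(-1),$$
with $j \in \{0,1\}$ contributing via $\mc{K}_{\mathfrak{p}}$ and $D_{\mathfrak{p}}$, respectively. Since $\rH^1_{\mathfrak{p},\Iw}(K,\zp)$ is given by local duality as a reflection of $D_{\mathfrak{p}}^*$, reading off the $E_\infty$-terms on both sides pins down $\rE^i(D_{\mathfrak{p}})$ from the $\rE^j(\mc{K}_{\mathfrak{p}})$ computed in Remark \ref{EKp}. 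The resulting vanishing outside $\{0,1,r_{\mathfrak{p}}-2\}$ and the explicit formulas for $\rE^1(D_{\mathfrak{p}})$ and $\rE^{r_{\mathfrak{p}}-2}(D_{\mathfrak{p}})$ fall out case by case; the short exact sequence for $r_{\mathfrak{p}}=3$ comes from the unique differential that connects the two surviving terms on neighboring rows.

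The main obstacle will be twist bookkeeping and the verification that the spectral sequence differentials behave as claimed in the borderline case $r_{\mathfrak{p}}=3$, where $\rE^1(D_{\mathfrak{p}})$ genuinely sits in a nonsplit extension. The proofs of the analogous statements for the imaginary quadratic case in \cite{BCGKPST} (Lemma 4.2.2 and Theorem 4.1.14 cited in Proposition \ref{prop:localseq}) cover $r_{\mathfrak{p}} \le 2$, and the higher cases demand carefully tracking that the only nonvanishing $\rE^j(\mc{K}_{\mathfrak{p}})$ sits in the single degree $j=r_{\mathfrak{p}}$, so that all extension problems in the spectral sequence collapse except the one in degree $r_{\mathfrak{p}}=3$.
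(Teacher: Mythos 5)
Your overall strategy — feeding the explicit $\rE^j(\mc{K}_{\mf{p}})$ computation from Remark \ref{EKp} into Proposition \ref{prop:localseq} and into the local Tate-duality spectral sequence — is aligned with what the paper does. But there is a genuine gap at the step where you try to convert $D_{\mf{p}}^{**}$ into $D_{\mf{p}}^*$.

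The proposed identification $\rH^1_{\mf{p},\Iw}(K,\zp) \cong D_{\mf{p}}^*(-1)^{\iota}$ is wrong. Under Hypothesis \ref{hyp:nicehyp} ($\mu_{p^\infty}\subset K$) the correct identification is $\rH^1_{\mf{p},\Iw}(K,\zp) \cong \rH^1_{\mf{p},\Iw}(K,\zp(1))(-1) = D_{\mf{p}}(-1)$, and $D_{\mf{p}}$ is in general \emph{not} isomorphic (even up to twist and $\iota$) to its dual. Indeed, for $r_{\mf{p}}=1$, the module $D_{\mf{p}}$ has nontrivial torsion submodule $\rE^1(\mc{K}_{\mf{p}})(1)\cong\mc{K}_{\mf{p}}^{\iota}(1)$ while $D_{\mf{p}}^*$ is torsion-free; for $r_{\mf{p}}=2$, $D_{\mf{p}}$ is torsion-free but not reflexive, with $D_{\mf{p}}^{**}/D_{\mf{p}}\cong\mc{K}_{\mf{p}}^{\iota}(1)\neq 0$, whereas $D_{\mf{p}}^*$ is reflexive. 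So the two modules are merely pseudo-isomorphic, precisely in the small-$r_{\mf{p}}$ cases (i)--(ii) that the lemma treats most carefully. The paper instead runs the spectral sequence \eqref{Tate} directly with $\zp$-coefficients, where $E_2^{0,1} = \rE^0(D_{\mf{p}}) = D_{\mf{p}}^*$ is an entry and $H^1 = D_{\mf{p}}(-1)$ is the abutment; this produces a seven-term exact sequence
$$
0 \to \rE^1(\mc{K}_{\mf{p}}) \to D_{\mf{p}}(-1) \to D_{\mf{p}}^* \to \rE^2(\mc{K}_{\mf{p}}) \to \mc{K}_{\mf{p}}(-1) \to \rE^1(D_{\mf{p}}) \to \rE^3(\mc{K}_{\mf{p}}) \to 0
$$
together with $\rE^i(D_{\mf{p}})\xrightarrow{\sim}\rE^{i+2}(\mc{K}_{\mf{p}})$ for $i\ge 2$. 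All the claimed exact sequences and the vanishing outside $\{0,1,r_{\mf{p}}-2\}$ drop out by substituting Remark \ref{EKp}; the identification $D_{\mf{p}}^*\cong D_{\mf{p}}^{**}(-1)$ then comes from comparing with the $(-1)$-twist of the Proposition \ref{prop:localseq} sequence (both sides are the reflexive hull of $D_{\mf{p}}(-1)$, and an injection of reflexive modules with pseudo-null cokernel is an isomorphism).

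Two further points worth flagging. First, for $r_{\mf{p}}=2$ the spectral sequence does not simply ``collapse'': the potential nonzero map $\mc{K}_{\mf{p}}^{\iota}(1)\to\mc{K}_{\mf{p}}$ must actually be shown to vanish (the paper appeals to the proof of \cite[Thm.~4.1.14]{BCGKPST}); your proposal passes over this. Second, for freeness in (i)--(ii), being a first dual only buys reflexivity, not freeness; the paper establishes $\rE^i(D_{\mf{p}}^*)=0$ for all $i\ge 1$ (not just $i=1$) using the isomorphisms from the spectral sequence together with \cite[Cor.~A.9]{BCGKPST}, and then concludes freeness. Your proposal asserts only the $i=1$ vanishing ``by the spectral sequence below,'' which is not sufficient and not fully justified.
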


\begin{proof} 
	The local spectral sequence in the proof of Proposition \ref{thm:ss} for $T = \zp$ has the form
	\begin{equation} \label{Tate}
		\rE^i(\rH_{\mf{p},\Iw}^{2-j}(K,\zp(1))) \Rightarrow  \rH_{\mf{p},\Iw}^{i+j}(K,\zp).
	\end{equation}
	We have $\rH_{\mf{p},\Iw}^i(K,\zp) \cong \rH_{\mf{p},\Iw}^i(K,\zp(1))(-1)$ 
	by assumption on $K$,
	and $\rH^i_{\mf{p},\Iw}(K,\zp(1))$ is trivial unless $i \in \{1,2\}$.
	Since
	\begin{eqnarray*} \label{eq:fits}
		\rH^1_{\mf{p},\Iw}(K,\zp(1)) \cong D_{\mf{p}} &\mr{and}& \rH^2_{\mf{p},\Iw}(K,\zp(1)) \cong \mc{K}_{\mf{p}},
	\end{eqnarray*}
	the spectral sequence \eqref{Tate} yields an exact sequence
	\begin{equation} \label{locales}
		0 \to \rE^1(\mc{K}_{\mf{p}}) \to D_{\mf{p}}(-1) \to D_{\mf{p}}^* \to \rE^2(\mc{K}_{\mf{p}}) 
		\to \mc{K}_{\mf{p}}(-1) \to \rE^1(D_{\mf{p}}) \to \rE^3(\mc{K}_{\mf{p}}) \to 0
	\end{equation}
	and isomorphisms $\rE^i(D_{\mf{p}}) \xrightarrow{\sim} \rE^{i+2}(\mc{K}_{\mf{p}})$ for $i \ge 2$.
	The exact sequences and isomorphisms follow easily from this and Remark \ref{EKp}.  
	(Here, one must note
	that the map $\mc{K}_{\mf{p}}^{\iota}(1) \to \mc{K}_{\mf{p}}$ that arises in \eqref{locales}
	for $r_{\mf{p}} = 2$ can only be zero,
	as in the proof of \cite[Thm.~4.1.14]{BCGKPST} already cited.)
	The statements of freeness for $r_{\mf{p}} \in \{1,2\}$ follow from $\rE^i(D_{\mf{p}}^*) = 0$ for $i \ge 1$,
	which is derived from the above and \cite[Cor.~A.9]{BCGKPST}.
	The equality $\rank_{\Omega} D_{\mf{p}} = d_{\mf{p}}$ follows from \cite[Lem.~4.3.1(b)]{BCGKPST}.  
\end{proof}
\medbreak

We note that Lemma \ref{lem:localexts} tells us that the reflexive $\Omega$-module $D_{\mf{p}}^*$ is not free if $r_{\mf{p}} \ge 3$, since in that case its first Ext-group is nonzero.
The following corollary is proven in the same manner as Theorem \ref{thm:free} but using Lemma \ref{lem:localexts}.

\begin{corollary} \label{cor:localfree}
	Let $\mf{p} \in S_f$, and let $\mf{q}$ be a prime ideal of $\Omega$ that is either 
	\begin{itemize}
		\item of codimension less than $r_{\mf{p}}$ or
		\item outside the support of $\mc{K}_{\mf{p}}^{\iota}(1)$ and, if $r_{\mf{p}} \ge 3$, also
		outside the support of $\mc{K}_{\mf{p}}$.
	\end{itemize}
	Then $(D_{\mf{p}})_{\mf{q}}$ is free of rank $[E_{\mf{p}}:\mathbb{Q}_p]$ over $\Omega_{\mf{q}}$.
\end{corollary}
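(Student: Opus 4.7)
The plan is to mimic the proof of Theorem \ref{thm:free} in the local setting, now feeding in Lemma \ref{lem:localexts} in place of Proposition \ref{prop:iwseq}. Concretely, since $\Omega_{\mf{q}}$ is a regular local ring and $(D_{\mf{p}})_{\mf{q}}$ is finitely generated, the citation of \cite[(4.12)]{AB} used at the end of Theorem \ref{thm:free} reduces the claim of freeness to the vanishing $\Ext^i_{\Omega_{\mf{q}}}((D_{\mf{p}})_{\mf{q}},\Omega_{\mf{q}}) = 0$ for all $i \ge 1$. By flatness of localization and the identifications $\Ext^i_{\Omega}(M,\Omega) \cong \rE^i(M^{\iota}) \cong \rE^i(M)^{\iota}$ recorded before Proposition \ref{thm:ss}, these Ext groups are the $\mf{q}$-localizations of $\rE^i(D_{\mf{p}})^{\iota}$. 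Once freeness is established, the rank must equal $\rank_{\Omega} D_{\mf{p}} = [E_{\mf{p}}:\mathbb{Q}_p]$, the latter equality being part of Lemma \ref{lem:localexts}.

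To identify $\rE^i(D_{\mf{p}})^{\iota}$ explicitly, I would apply the elementary identity $M^{\iota}(n) = M(-n)^{\iota}$ (verified from the definitions of $\iota$ and of Tate twists) to the descriptions in Lemma \ref{lem:localexts}. This gives $\rE^1(D_{\mf{p}})^{\iota} \cong \mc{K}_{\mf{p}}^{\iota}(1)$ and vanishing in higher degrees when $r_{\mf{p}} \in \{1,2\}$; an exact sequence $0 \to \mc{K}_{\mf{p}}^{\iota}(1) \to \rE^1(D_{\mf{p}})^{\iota} \to \mc{K}_{\mf{p}} \to 0$ when $r_{\mf{p}} = 3$; and when $r_{\mf{p}} \ge 4$, isomorphisms $\rE^1(D_{\mf{p}})^{\iota} \cong \mc{K}_{\mf{p}}^{\iota}(1)$, $\rE^{r_{\mf{p}}-2}(D_{\mf{p}})^{\iota} \cong \mc{K}_{\mf{p}}$, with all other higher Ext groups zero. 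Thus the desired vanishing of $\Ext^i_{\Omega_{\mf{q}}}((D_{\mf{p}})_{\mf{q}},\Omega_{\mf{q}})$ for $i \ge 1$ reduces to showing $(\mc{K}_{\mf{p}}^{\iota}(1))_{\mf{q}} = 0$ in every case and additionally $(\mc{K}_{\mf{p}})_{\mf{q}} = 0$ when $r_{\mf{p}} \ge 3$.

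These vanishings are built into the second hypothesis of the corollary. To see that the first hypothesis (codimension $< r_{\mf{p}}$) also suffices, I would note that $\mc{K}_{\mf{p}} = \zp\ps{\mc{G}/\mc{G}_{\mf{p}}}$ is annihilated by the augmentation ideal of $\zp\ps{\mc{G}_{\mf{p}}} \subset \Omega$, which has height $r_{\mf{p}}$; since $\iota$ and the Tate twist by $1$ are induced by automorphisms of $\Omega$, the $\Omega$-support of $\mc{K}_{\mf{p}}^{\iota}(1)$ also has codimension $r_{\mf{p}}$, so any prime of codimension less than $r_{\mf{p}}$ lies outside both supports. The only delicate point in the whole argument is the bookkeeping of the $\iota$ and Tate twists required to match the hypothesis (stated for $\mc{K}_{\mf{p}}^{\iota}(1)$) with the descriptions in Lemma \ref{lem:localexts} (stated for $\mc{K}_{\mf{p}}(-1)$); everything else is formal.
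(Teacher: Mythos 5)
Your proof is correct and follows exactly the approach the paper indicates: the paper dispatches this corollary with the one-line remark that it is "proven in the same manner as Theorem \ref{thm:free} but using Lemma \ref{lem:localexts}," which is precisely the reduction to vanishing of higher Ext groups via \cite[(4.12)]{AB} that you carry out, together with the Ext computations from Lemma \ref{lem:localexts}. Your explicit handling of the $\iota$/Tate twist bookkeeping and the observation that the first hypothesis on codimension implies the second (since $\iota$ and Tate twist are restrictions along ring automorphisms and so preserve codimension of support) fills in exactly the details the paper leaves to the reader.
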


\section{CM fields}
\label{s:CM}

Unless otherwise stated, we maintain the notation of the previous section.
Let $E$ be a CM extension of $\Q$ of degree $2d$ and $E^+$ its maximal totally real subfield.  
Let $p$ be an odd prime such that each prime over $p$ in $E^+$ splits in $E$. 
By a ($p$-adic) CM type, we shall mean a set consisting of one prime of $E$ over each of the primes over $p$ in $E^+$.

Let $\widetilde{E}$ be the compositum of all $\zp$-extensions of $E$. If Leopoldt's conjecture holds for $E$ and $p$, 
then  $\widetilde{E}$ is the compositum of the cyclotomic $\zp$-extension $E^{\mr{cyc}}$ and the anticyclotomic
 $\zp^d$-extension $E^{\mr{acyc}}$ of $E$. We set $\Gamma = \Gal(\widetilde{E}/E)$.
 
As before, we let $r = \rank_{\zp} \Gamma$ and $r_{\mf{p}} = \rank_{\zp} \Gamma_{\mf{p}}$, and we also set 
$d_{\mf{p}} = [E_{\mf{p}}:\qp]$ for $\mf{p} \in S_f$.
 
\begin{lemma} \label{lem:sweepup} Let $\mf{p} \in S_f$.
\begin{enumerate}
	\item[(i)] One has $r_{\mf{p}} = d_{\mf{p}}+1$.
	\item[(ii)] The extension $\widetilde{E}/E$ has infinite residue field degree at $\mf{p}$.
\end{enumerate}
\end{lemma}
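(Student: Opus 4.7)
The plan is to prove (i) via global class field theory combined with a clean application of complex conjugation, and then to deduce (ii) from (i) as a short consequence.

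For (i), global class field theory realizes $\Gamma$ (modulo finite torsion) as the cokernel of the diagonal map
\[
\overline{\mc{O}_{E,S}^{\times}} \hookrightarrow \prod_{\mathfrak{q} \in S_f} (\widehat{E_{\mathfrak{q}}^{\times}})^{(p)},
\]
where $\overline{\mc{O}_{E,S}^{\times}}$ denotes the $p$-adic closure of the global $p$-units. The decomposition group $\Gamma_{\mathfrak{p}}$ is the image of the $\mathfrak{p}$-th factor modulo torsion, and since $(\widehat{E_{\mathfrak{p}}^{\times}})^{(p)}$ has $\zp$-rank $d_{\mathfrak{p}}+1$, the bound $r_{\mathfrak{p}} \le d_{\mathfrak{p}}+1$ is immediate. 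To obtain equality one needs to show that the projection $\overline{\mc{O}_{E,S}^{\times}} \to \prod_{\mathfrak{q}\neq \mathfrak{p}} (\widehat{E_{\mathfrak{q}}^{\times}})^{(p)}$ has $\zp$-torsion kernel.

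To that end, let $u \in \overline{\mc{O}_{E,S}^{\times}}$ have trivial projection in $(\widehat{E_{\mathfrak{q}}^{\times}})^{(p)}$ for every $\mathfrak{q}\in S_f$ with $\mathfrak{q}\neq \mathfrak{p}$. Since $p$ is odd, the action of complex conjugation $c \in \Gal(E/E^+)$ on $\overline{\mc{O}_{E,S}^{\times}}$ decomposes $u = u^+ + u^-$ into $\pm$-eigenspace components, each lying in $\overline{\mc{O}_{E,S}^{\times}}$ and satisfying the same vanishing condition. The hypothesis that every prime of $E^+$ above $p$ splits in $E$ yields canonical isomorphisms $E_{\mathfrak{q}} \cong E^+_{\mathfrak{q}^+} \cong E_{\bar{\mathfrak{q}}}$, under which the two embeddings of $E$ into these completions differ by $c$. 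By continuity, elements of $\overline{\mc{O}_{E,S}^{\times}}$ of pure $\pm$-type therefore satisfy the relations $u^+_{\bar{\mathfrak{q}}} = u^+_{\mathfrak{q}}$ and $u^-_{\bar{\mathfrak{q}}} = -u^-_{\mathfrak{q}}$ in each semi-local factor (additively). Applying this at $\mathfrak{q} = \mathfrak{p}$: the hypothesis $u^{\pm}_{\bar{\mathfrak{p}}} = 0$ forces $u^{\pm}_{\mathfrak{p}} = 0$, and combined with the vanishing at every other prime this gives $u^+ = u^- = 0$, hence $u = 0$. Thus $r_{\mathfrak{p}} = d_{\mathfrak{p}}+1$.

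For (ii), the local pro-$p$ abelianized Galois group $\Gal(E_{\mathfrak{p}}^{\mr{ab}(p)}/E_{\mathfrak{p}})$ has torsion-free $\zp$-rank $d_{\mathfrak{p}}+1$, matching that of its quotient $\Gamma_{\mathfrak{p}}$ by (i); hence $\Gamma_{\mathfrak{p}}$ has finite index in the torsion-free part, and in particular its image in the rank-one unramified pro-$p$ quotient $\Gal(E_{\mathfrak{p}}^{\mr{ur}(p)}/E_{\mathfrak{p}}) \cong \zp$ is open. Therefore $\widetilde{E}$ has infinite residue field degree at $\mathfrak{p}$. The principal obstacle is the rank-zero kernel claim in (i), where the CM-splitting hypothesis is essential in linking the $\pm$-eigenspace decomposition with the $\mathfrak{p}/\bar{\mathfrak{p}}$ supports: vanishing at one prime forces vanishing at its conjugate, independently of any Leopoldt-type assumption.
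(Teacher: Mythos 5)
There is a genuine gap in your proof of part (i). You claim that when $u \in \overline{\mc{O}_{E,S}^{\times}}$ has trivial component at every $\mathfrak{q} \neq \mathfrak{p}$, the eigenspace pieces $u^{\pm} = \tfrac{1}{2}(u \pm c(u))$ satisfy ``the same vanishing condition.'' This is false. Complex conjugation $c$ sends the $\mathfrak{q}$-component to the $\bar{\mathfrak{q}}$-component, so $c(u)$ vanishes at all $\mathfrak{q} \neq \bar{\mathfrak{p}}$, not at all $\mathfrak{q} \neq \mathfrak{p}$. Consequently $u^{\pm}$ are only guaranteed to vanish at $\mathfrak{q} \notin \{\mathfrak{p}, \bar{\mathfrak{p}}\}$, and one computes directly that $u^{+}_{\bar{\mathfrak{p}}} = u^{-}_{\bar{\mathfrak{p}}}$ is $\tfrac{1}{2}u_{\mathfrak{p}}$ up to sign, which is exactly the quantity you are trying to show is torsion. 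The subsequent appeal to ``the hypothesis $u^{\pm}_{\bar{\mathfrak{p}}} = 0$'' is therefore circular: you are assuming the conclusion. What remains after the decomposition is a plus-type $p$-unit of $E^{+}$ supported at $\mathfrak{p}^{+}$ and a minus-type $p$-unit of $E$ supported on the conjugate pair $\{\mathfrak{p},\bar{\mathfrak{p}}\}$, and neither of these is ruled out by the relations $u^{\pm}_{\bar{\mathfrak{q}}} = \pm u^{\pm}_{\mathfrak{q}}$ alone.

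The paper's proof sidesteps this entirely by using global \emph{units} $\mc{O}_{E}^{\times}$ rather than $p$-units, together with local \emph{unit groups} $\mc{O}_{E_{\mathfrak{p}}}^{\times}$ rather than full local multiplicative groups. For a CM field, $\mc{O}_E^{\times} \otimes_{\zp} \qp = \mc{O}_{E^+}^{\times} \otimes_{\zp} \qp$ has trivial minus part, so the minus part of the class-field-theory exact sequence collapses to an isomorphism $\bigoplus_{\mathfrak{p}} (\mc{O}_{E_{\mathfrak{p}}}^{\times} \otimes \qp)^{-} \xrightarrow{\sim} \Gamma^{-} \otimes \qp$, and a rank count (each local $1$-unit group has rank $d_{\mathfrak{p}}$, and the sum over a CM type is $d = \rank_{\zp}\Gamma^{-}$) yields $\rank_{\zp}(\Gamma^{-})_{\mathfrak{p}} = d_{\mathfrak{p}}$. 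Combined with the cyclotomic fact that $\rank_{\zp}(\Gamma^{+})_{\mathfrak{p}} \ge 1$ and the local class field theory upper bound $r_{\mathfrak{p}} \le d_{\mathfrak{p}}+1$, one gets equality. Your part (ii) deduction from (i) is fine and close in spirit to the paper's, but (i) needs the paper's route (or another valid one): the vanishing of the minus part of the global units is precisely what makes the argument work, and replacing units by $p$-units reintroduces a nontrivial minus part that your conjugation trick cannot kill.
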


\begin{proof}
Let $\Sigma$ be a CM type containing $\mf{p}$.  To prove (ii), it suffices to show that $\mf{p}$ has infinite order in the inverse limit of the ray class groups of $E$ of conductor a power of $\prod_{\mf{q} \in \overline{\Sigma}} \mf{q}$.  Let $\alpha \in \mc{O}_E$ generate a positive power of $\mf{p}$.  By  class field theory, it suffices to prove 
that no positive power of $\alpha $ lies in the closure $U$ of the image of the unit group $\mc{O}_E^{\times}$ in
$\prod_{\mf{q} \in \overline{\Sigma}} E_{\mf{q}}^{\times}$.  Here $\prod_{\mf{q} \in \overline{\Sigma}} E_{\mf{q}}^{\times}$ is canonically isomorphic to $(E^+ \otimes_{\Z} \zp)^{\times}$, so the norm $\mathrm{Norm}_{E^+/\Q}$
from $E^+$ to $\Q$ induces a continuous homomorphism $\mc{N} \colon \prod_{\mf{q} \in \overline{\Sigma}} E_{\mf{q}}^{\times} \to \qp^{\times}$. The group $\mc{O}_{E^+}^{\times}$ has finite index in $\mc{O}_E^{\times}$
and $\mathrm{Norm}_{E^+/\Q} (\mc{O}_{E^+}^{\times}) \subseteq  \{\pm 1\}$, so $U \cap \ker \mc{N}$ is of finite index in $U$.  Let $T$ be the set of embeddings of $E$ into $\overline{\qp}$ that send some prime in $\overline{\Sigma}$ into the maximal ideal of the integral closure of
$\zp$ in $\overline{\qp}$.   Then $\mc{N}(\alpha) = \prod_{\sigma \in T} \sigma(\alpha)$ is a product of non-units of the ring of all algebraic integers, so is certainly not a root of unity.  Thus, no positive power of $\alpha$ lies in $\ker \mc{N}$, so no such power lies  in $U$ and we have (ii). 

From (ii), we see that $r_{\mf{p}} = \rank_{\zp} J_{\mf{p}} + 1$, where $J_{\mf{p}}$ denotes the inertia group in $\Gamma_{\mf{p}}$. 
	Local reciprocity maps provide a homomorphism $\bigoplus_{\mf{q} \in S_f} \mc{O}_{E_{\mf{q}}}^{\times}   \to \Gamma$ with
	kernel $\mc{O}_E^{\times} \otimes_{\Z} \zp$ and finite cokernel. In particular,
	$\rank_{\zp} J_{\mf{q}} \le  \rank_{\zp} \mc{O}_{E_{\mf{q}}}^{\times} = d_{\mf{q}}$ for all $\mf{q} \in S_f$.
	As the $(-1)$-eigenspace of $\mc{O}_E^{\times} \otimes_{\Z} \zp$ under complex conjugation is finite,
	the sum of the $\zp$-ranks of the inertia subgroups at $\mf{q} \in \Sigma$ in $\Gal(E^{\mr{acyc}}/E)$ is $d$.
	As $\sum_{\mf{q} \in \Sigma} d_{\mf{q}} = d$, this forces $\rank J_{\mf{q}} = d_{\mf{q}}$ for all $\mf{q} \in \Sigma$. 
	In particular, we have (i).
\end{proof}

We let $\psi$ denote a one-dimensional character of the absolute Galois group of $E$ of finite order prime to $p$, and
we let $E_{\psi}$ denote the fixed field of its kernel.
We set $F = E_{\psi}(\mu_p)$ and $\Delta = \Gal(F/E)$.
Let $\omega$ denote the Teichm\"uller character of $\Delta$. 
We set $K = F\widetilde{E}$.  We take $\mc{G} = \Gal(K/E)$.  We shall make the identification $\Gamma = \Gal(K/F)$ for
the isomorphism given by restriction.

Let $W$ denote the Witt vectors of $\overline{\mathbb{F}}_p$.  We set 
$\Omega = \zp\ps{\mc{G}}$ and $\Lambda = W\ps{\Gamma}$.   
For a compact $\Omega$-module $A$, we define
\begin{equation} \label{eq:eig}
	A^{\psi} = A \cotimes{\zp[\Delta]} W
\end{equation} 
for the map $\zp[\Delta] \to W$
induced by $\psi$.  In particular, we have $\Omega^{\psi} \cong \Lambda$.
When dealing with finitely generated $\Lambda$-modules $M$, we abuse notation and set $\rE^j(M) = \Ext_{\La}^j(M^{\iota},\La)$, 
much as before but now with $W$-coefficients.

For any subset $P$ of $S_f$, let us set 
\begin{equation} \label{eq:d}
	d_P = \sum_{\mf{p} \in P} d_{\mf{p}}.
\end{equation}

\begin{lemma} \label{lem:ranklemma}
	Let $\mc{S}$ be a subset of $S_f$ containing a  CM type $\Sigma$, let $\mc{S}^c = S_f-\mc{S}$, 
	and let $\mc{T} = \mc{S}-\Sigma$.  We have
	$$
		\rank_{\Lambda} X_{\mc{S}}^{\psi} = \rank_{\Lambda} Y_{\mc{S}}^{\psi} = d - d_{\mc{S}^c},
	$$
	where $Y_{\mc{S}}$ is as in (\ref{eq:Y}).
	Moreover, the canonical map $I_{\mc{T}}^{\psi} \to X_{\mc{S}}^{\psi}$ is injective with torsion cokernel.
\end{lemma}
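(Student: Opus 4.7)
My plan breaks the lemma into three assertions—(i)~$\rank_\Lambda X_\mc{S}^\psi = \rank_\Lambda Y_\mc{S}^\psi$, (ii)~the common rank equals $d - d_{\mc{S}^c}$, and (iii)~the map $I_\mc{T}^\psi \hookrightarrow X_\mc{S}^\psi$ has torsion cokernel—and attacks them in sequence, using Lemma~\ref{lem:sweepup} and the structural results of Section~\ref{s:duality} as the main inputs.

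For (i), Lemma~\ref{lem:sweepup}(ii) together with Hypothesis~\ref{hyp:nicehyp} forces $D_\mf{p} = I_\mf{p}$ at each $\mf{p} \in S_f$: the unramified quotient $D_\mf{p}/I_\mf{p}$ corresponds to the pro-$p$ part of the residue field extension of $K_\mf{p}/E_\mf{p}$, which is already exhausted. By Remark~\ref{rem:conseqD=I} this gives $X_\mc{S} = X^\flat_\mc{S}$, so Lemma~\ref{lem:needlater} reduces to the short exact sequence
\[
0 \to X_\mc{S}^\psi \to Y_\mc{S}^\psi \to \mc{K}_{\mc{S}^c,0}^\psi \to 0,
\]
which remains exact on $\psi$-parts since $p \nmid |\Delta|$. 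By Lemma~\ref{lem:sweepup}(i) each $\Gamma_\mf{p}$ is infinite, so each $\mc{K}_\mf{p}^\psi$ is $\Lambda$-torsion and the two ranks coincide.

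For (ii), I use two right-exact sequences. Killing the inertia at primes in $\mc{T} = \mc{S} - \Sigma$ yields $I_\mc{T}^\psi \to X_\mc{S}^\psi \to X_\Sigma^\psi \to 0$, and Hida--Tilouine gives that $X_\Sigma^\psi$ is $\Lambda$-torsion; symmetrically, $X_\mc{S}^\psi$ is the quotient of $X_{S_f}^\psi$ by the image of $I_{\mc{S}^c}^\psi$. As inputs I would take the classical formula $\rank_\Lambda X_{S_f}^\psi = d$ (obtained by averaging the Iwasawa rank $\rank_{\zp\ps{\Gamma}} X_{S_f} = r_2(F) = d|\Delta|$ against the upper bound furnished by the first sequence applied to $\mc{S} = S_f$), and the local formula $\rank_\Lambda I_\mf{p}^\psi = d_\mf{p}$, which follows from Lemma~\ref{lem:localexts} (giving $\rank_\Omega D_\mf{p} = d_\mf{p}$) together with the fact that $\mf{I}_\mf{p}$ is $\zp\ps{\mc{G}_\mf{p}}$-projective of rank $d_\mf{p}$ by the Iwasawa theory of local units in a $\zp^{r_\mf{p}}$-tower. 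These combine to give matching bounds $\rank_\Lambda X_\mc{S}^\psi \le \sum_{\mf{p} \in \mc{T}} d_\mf{p} = d_\mc{T}$ and $\rank_\Lambda X_\mc{S}^\psi \ge d - d_{\mc{S}^c}$, and the arithmetic identity $d_\mc{T} = d - d_{\mc{S}^c}$ (from $d_\Sigma = d$ and $d_\mc{S} + d_{\mc{S}^c} = 2d$) closes the computation.

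For (iii), I would note that $I_\mc{T}^\psi$ is $\Lambda$-torsion-free: Lemma~\ref{lem:localexts} cases (ii)-(iv) (all applicable since $r_\mf{p} \ge 2$ in the CM setting) exhibit $I_\mf{p} = D_\mf{p}$ inside the reflexive double-dual $D_\mf{p}^{**}$, and torsion-freeness is preserved by the exact functor $-^\psi$. By the rank equality from (ii), the kernel of $I_\mc{T}^\psi \to X_\mc{S}^\psi$ has $\Lambda$-rank zero and sits inside a torsion-free module, hence vanishes; the cokernel $X_\Sigma^\psi$ is torsion by Hida--Tilouine. The principal technical obstacle is securing the local rank assertion $\rank_\Lambda I_\mf{p}^\psi = d_\mf{p}$: while Lemma~\ref{lem:localexts} establishes the $\Omega$-rank, uniformity across $\psi$-eigenspaces requires a structural input on $\mf{I}_\mf{p}$ as a $\zp\ps{\mc{G}_\mf{p}}$-module that goes beyond what has been stated.
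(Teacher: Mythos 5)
Your proposal is correct and follows essentially the same strategy as the paper, with the same key inputs: Remark~\ref{rem:conseqD=I} and Lemma~\ref{lem:needlater} to compare $X_{\mc{S}}^{\psi}$ with $Y_{\mc{S}}^{\psi}$; the rank formulas $\rank_\Lambda X_{S_f}^{\psi} = d$ and $\rank_\Lambda I_P^{\psi} = d_P$ together with the Hida--Tilouine torsionness of $X_{\Sigma}^{\psi}$ to determine the common rank; and torsion-freeness of $I_{\mc{T}}^{\psi}$ to upgrade the pseudo-injectivity to injectivity.

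There are only minor divergences in execution. For the rank computation, you establish matching upper and lower bounds and then use the arithmetic identity $d_{\mc{T}} = d - d_{\mc{S}^c}$, whereas the paper argues directly: since $X_{\Sigma}^{\psi} = \coker(I_{\Sigma^c}^{\psi} \to X_{S_f}^{\psi})$ is torsion and $\rank I_{\Sigma^c}^{\psi} = d_{\Sigma^c} = d = \rank X_{S_f}^{\psi}$, the kernel of $I_{\Sigma^c}^{\psi} \to X_{S_f}^{\psi}$ is torsion; restricting to the direct summand $I_{\mc{S}^c}^{\psi}$ then shows its image has rank exactly $d_{\mc{S}^c}$, so $X_{\mc{S}}^{\psi}$ has rank $d - d_{\mc{S}^c}$. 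For torsion-freeness of $I_{\mc{T}}^{\psi}$, you invoke the explicit exact sequences of Lemma~\ref{lem:localexts} to embed $D_{\mf{p}}(-1)$ into a reflexive module; the paper instead uses the map of exact sequences in Proposition~\ref{prop:localseq} to embed the torsion of $I_{\mc{T}}^{\psi} = D_{\mc{T}}^{\psi}$ into $\rE^1(\mc{K}_{\mc{T}})(1)^{\psi}$, which vanishes by Remark~\ref{EKp} because $r_{\mf{p}} \ge 2$ for all $\mf{p} \in S_f$ (Lemma~\ref{lem:sweepup}). Both routes are valid; the paper's is slightly cleaner because it works uniformly for all $r_{\mf{p}} \ge 2$ without case analysis.

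Regarding the ``principal technical obstacle'' you flag at the end: you are right that the $\Lambda$-rank $d_{\mf{p}}$ of $D_{\mf{p}}^{\psi}$ uniformly in $\psi$ requires an input beyond what the present paper records inline, but this is not a gap in the proof. The paper obtains $\rank_\Lambda I_P^{\psi} = \rank_\Lambda D_P^{\psi} = d_P$ in equation~\eqref{eq:rankID} by combining $I_P = D_P$ (Lemma~\ref{lem:sweepup}(ii) and Proposition~\ref{prop:localseq}) with the citation to \cite[Lem.~4.3.1(b)]{BCGKPST}, which is precisely the structural statement on $D_{\mf{p}}$ as a $\zp\ps{\mc{G}}$-module you are asking for. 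So the correct move is to cite that lemma rather than to attempt an inline derivation via Iwasawa theory of local units; similarly, the rank $d$ of $X_{S_f}^{\psi}$ should be cited from \cite[Lem.~4.3.1(a)]{BCGKPST} rather than obtained by the heuristic averaging argument you sketch, which would itself require justifying equidistribution of the Iwasawa rank across eigenspaces.
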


\begin{proof} We first note that $X_{\mc{S}} = X^\flat_{\mc{S}}$ because of Lemma \ref{lem:sweepup}(ii).  
By Lemma \ref{lem:needlater}, the cokernel of the injection
	$X_{\mc{S}} \hookrightarrow Y_{\mc{S}}$ is isomorphic to the $\Lambda$-torsion module $\mc{K}_{\mc{S}^c,0}$ (noting 
	$\Gamma_{\mf{p}} \neq 0$).  Therefore the ranks of $X_{\mc{S}}^{\psi}$ and $Y_{\mc{S}}^{\psi}$ are the same.

	We know that $X_{S_f}^{\psi}$ has $\Lambda$-rank $d = r_2(E)$ by \cite[Lem.~4.3.1(a)]{BCGKPST}, and $X_{\Sigma}^{\psi}$
	is $\Lambda$-torsion by the work of Hida-Tilouine \cite[Thm.~1.2.2]{HT}.
	For any subset $P$ of $S_f$, we have
	\begin{equation} \label{eq:rankID}
		\rank_{\Lambda} I_P^{\psi} =  \rank_{\Lambda} D_P^{\psi} = d_P
	\end{equation}
	by Lemma \ref{lem:sweepup}, Proposition \ref{prop:localseq}, and \cite[Lem.~4.3.1(b)]{BCGKPST}.  Since
	$d_{\Sigma^c} = d$, this forces $I_{\Sigma^c}^{\psi}$ to have image of rank $d$ in $X_{S_f}^{\psi}$. 
	As $\mc{S}^c \subset \Sigma^c$, the image of $I_{\mc{S}^c}^{\psi}$ in $X_{S_f}^{\psi}$ has rank $d_{\mc{S}^c}$, and therefore
	$X_{\mc{S}}^{\psi} = \coker(I_{\mc{S}^c}^{\psi} \to X_{S_f}^{\psi})$ has rank $d-d_{\mc{S}^c}$.
	
	Similarly, since $X_{\Sigma}^{\psi}$ is $\Lambda$-torsion, 
	the image of $I_{\mc{T}}^{\psi}$ in $X_{\mc{S}}^{\psi}$ must have $\Lambda$-rank $d_{\mc{T}} = d - d_{\mc{S}^c}$, 
	and the kernel of the map $I_{\mc{T}}^{\psi} \to X_{\mc{S}}^{\psi}$ is then $\Lambda$-torsion.  
	On the other hand, the $\Lambda$-torsion in $I_{\mc{T}}^{\psi}$ is isomorphic to a subgroup of $(\rE^1(\mc{K}_{\mc{T}})(1))^{\psi}$
	by Proposition \ref{prop:localseq}, but the latter group is zero by Remark \ref{EKp} since $r_{\mf{p}} \ge 2$ for all $\mf{p} \in S_f$ by 
	Lemma \ref{lem:sweepup}.
\end{proof}

As mentioned, for a  CM type $\Sigma$, the $\Lambda$-module $X_{\Sigma}^{\psi}$ is torsion.  We will use 
$\mc{L}_{\Sigma,\psi}$ to denote a generator of $c_1(X_{\Sigma}^{\psi})$.  
The Iwasawa main conjecture for $\Sigma$ and the character $\psi$ states that $\mc{L}_{\Sigma,\psi}$ can be taken to be the
Katz $p$-adic $L$-function for $\Sigma$ and $\psi$ (or more precisely a power series that determines it).  

For $\mf{p} \in S_f$, let $\Delta_{\mf{p}}$ be the decomposition group in $\Delta = \mr{Gal}(F/E)$.
We have $\mc{K}_{\mf{p}}^{\psi} = 0$ unless $\psi|_{\Delta_{\mf{p}}} = 1$, in which case 
$\mc{K}_{\mf{p}}^{\psi} \cong  W\ps{\Gamma/\Gamma_{\mf{p}}}$.  It follows from Remark \ref{EKp} that
$$
	\rE^j(\mc{K}_{\mf{p}})(1)^{\psi} \cong \rE^j(\mc{K}_{\mf{p}}^{\omega\psi^{-1}})(1) 
	\cong ((\mc{K}_{\mf{p}}^{\omega\psi^{-1}})^{\iota}(1))^{\delta_{r_{\mf{p}},j}}
$$
is zero unless $j = r_{\mf{p}}$ and $\omega\psi^{-1}|_{\Delta_{\mf{p}}} = 1$.  If nonzero, the latter $\Lambda$-module is isomorphic
to $W\ps{\Gamma/\Gamma_{\mf{p}}}^{\iota}(1)$. 

\begin{remark} \label{rem:noiota}
	In fact, $W\ps{\Gamma/\Gamma_{\mf{p}}}$ and $W\ps{\Gamma/\Gamma_{\mf{p}}}^{\iota}$
	are isomorphic as $\Lambda$-modules via the continuous $\zp$-linear map that takes a group element to its inverse.  
	In particular, we have $\mc{K}_{\mf{p}}^{\psi} \cong (\mc{K}_{\mf{p}}^{\psi})^{\iota}$ as $\Lambda$-modules,
	while $\mc{K}_{\mf{p}}^{\psi^{-1}} \cong (\mc{K}_{\mf{p}}^{\psi})^{\iota}$ as $\Lambda[\Delta]$-modules.
\end{remark}

\begin{remark} \label{rem:explicit}
	Choose a topological generating set $\{\gamma_1, \ldots, \gamma_r\}$ of $\Gamma$ so that for $s = r_{\mf{p}}$, we have
	$\Gamma_{\mf{p}} = \langle \gamma_1^{q_1}, \ldots, \gamma_s^{q_s} \rangle$ for some $p$-powers $q_1, \ldots, q_s$.
	Identifying $W\ps{\Gamma}$ with $W\ps{T_1,\ldots,T_r}$ via the continuous $W$-linear isomorphism taking $\gamma_i-1$
	to $T_i$ for $1 \le i \le r$,
	we then have
	\begin{equation*} \label{eq:fv}
		W\ps{\Gamma/\Gamma_{\mf{p}}} \cong W\ps{T_1,\ldots,T_r}/((T_1+1)^{q_1}-1, \ldots, (T_s+1)^{q_s}-1).
	\end{equation*}
	The codimension $s$ primes of $W\ps{\Gamma}$ in the support of the latter module have the form
	$$
		(\Phi_{q'_1}(T_1+1), \ldots, \Phi_{q'_s}(T_s+1)),
	$$
	where $q'_i$ is a positive divisor of $q_i$ for each $i$, and $\Phi_n$ is the $n$th cyclotomic polynomial.
	
	As for $W\ps{\Gamma/\Gamma_{\mf{p}}}^{\iota}(1)$, under this identification, we have
	$$
		W\ps{\Gamma/\Gamma_{\mf{p}}}^{\iota}(1) \cong W\ps{T_1, \ldots, T_r}/(\chi_p(\gamma_1)^{-q_1}(T_1+1)^{q_1}-1,
		\ldots, \chi_p(\gamma_s)^{-q_s}(T_s+1)^{q_s}-1),
	$$F
	where $\chi_p$ denotes the $p$-adic cyclotomic character on $\Gamma$.
\end{remark}

\begin{remark} \label{trivzero}
	For a CM type $\Sigma$, the primes in the support of $\mc{K}_{\mf{p}}^{\psi}$ for $\mf{p} \in \overline{\Sigma}$ and the primes 
	in the support of $(\mc{K}_{\mf{p}}^{\omega\psi^{-1}})^{\iota}(1)$ for $\mf{p} \in \Sigma$ yield trivial zeros of the Katz 
	$p$-adic $L$-functions for $\Sigma$ and $\psi$ (cf. \cite[Sect.~5.3]{Katz}).  In our terminology, this says that 
	$\mc{L}_{\Sigma,\psi}$ lies in each of these primes.
\end{remark}
	
\section{Exterior powers} \label{sec:exterior}

In this section, we prove some abstract lemmas on exterior powers that we shall use in our study.  We fix an integral domain $R$.
For a finitely generated $R$-module $M$, let $\bigwedge^{\ell} M$ denote the $\ell$th exterior power of $M$
over $R$.  Let $\rT_n(M)$ denote the maximal submodule of $M$ that is supported in codimension at least $n$.  Let $\Fitt(M)$ denote the $0$th Fitting ideal of $M$. For brevity of notation, we set $\rQ(M) = R/\Fitt(M)$ and $M_{\tf} = M/\rT_1(M)$.  We use the notation $c_n(M)$ for the $n$th Chern
class if the support of $M$ has codimension at least $n$ and set $t_n(M) = c_n(\rT_n(M))$ in general.  We will identify $t_1(M)$ with the
usual characteristic ideal of the torsion submodule $T_1(M)$ of $M$.

Let $\mc{X}$ and $\mc{F}$ be $R$-modules of rank $\ell \ge 1$ with $\mc{F}$ free. Let
$\lambda \colon \mc{X}\to \mc{F}$ be an $R$-module homomorphism with torsion kernel $\rT_1(\mc{X})$ and torsion cokernel
$\mc{E}$, which in our applications will be pseudo-null. 
The induced homomorphism
$\bigwedge^\ell\lambda \colon \bigwedge^\ell \mc{X} \to \bigwedge^\ell \mc{F}$ on exterior powers
fits in an exact sequence
$$
\textstyle	0 \to \rT_1(\bigwedge^\ell \mc{X}) \to \bigwedge^{\ell} \mc{X} \xrightarrow{\bigwedge^{\ell} \lambda} \bigwedge^{\ell} \mc{F}
	\to \rQ(\mc{E}) \to 0,
$$
essentially by definition. We note that if $\mc{I}$ is an $R$-submodule of $\mc{X}$ of rank $\ell$, then the induced map 
$(\bigwedge^{\ell} \mc{I})_{\tf} \to (\bigwedge^{\ell} \mc{X})_{\tf}$ on maximal torsion-free quotients is injective, so we can and do 
identify $(\bigwedge^{\ell} \mc{I})_{\tf}$ with its image in $(\bigwedge^{\ell} \mc{X})_{\tf}$.
 
\begin{lemma}
\label{lem:torsion!}
Suppose that $R$ is a Noetherian UFD.
For $n \ge 1$ and $1 \le i \le n$, let $\mc{I}_i$ be a rank $\ell$ submodule of $\mc{X}$ mapped injectively under $\lambda$ 
into a free submodule $\mc{J}_i$ of $\mc{F}$ with pseudo-null cokernel $\mc{B}_i := \mc{J}_i/\lambda(\mc{I}_i)$. 
Let 
$\theta_0$, $\theta_1$, and $L_i$
be generators of
of $t_1(\mc{X})$, $c_1(\mc{E})$, and $c_1(\mc{X}/\mc{I}_i)$,
respectively.   
Then $\theta_0$ divides $L_i$, and $\tilde{L}_i = 
\theta_1 L_i/\theta_0$ generates $c_1(\mc{F}/\mc{J}_i) = c_1(\bigwedge^\ell \mc{F}/\bigwedge^\ell \mc{J}_i)$.

We have an exact sequence
\begin{equation*} \label{eq:inertia}   			
    		\frac{(\bigwedge^\ell  \mc{X})_{\tf}}{(\bigwedge^\ell  \mc{I}_1)_{\tf} + \cdots + (\bigwedge^\ell  \mc{I}_n)_{\tf} } \to
		\frac{ \bigwedge^\ell  \mc{F}}{\bigwedge^\ell  \mc{J}_1 + \cdots + \bigwedge^\ell  \mc{J}_n } \to 
		\frac{\rQ(\mc{E})}{(\tilde{L}_1,\ldots,\tilde{L}_n)\rQ(\mc{E})}\to 0, \\   		
\end{equation*}
where the leftmost map has pseudo-null kernel with support contained in that of the $\Lambda$-modules $\rQ(\mc{B}_i)$.
\end{lemma}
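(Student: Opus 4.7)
The plan is to dispatch the three claims in sequence: the divisibility $\theta_0 \mid L_i$, the formula $\tilde{L}_i = \theta_1 L_i/\theta_0$, and the exact sequence together with its kernel bound.

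First, $\lambda|_{\mc{I}_i}$ is injective into the torsion-free $\mc{F}$, so $\mc{I}_i$ is torsion-free, $\mc{I}_i \cap \rT_1(\mc{X}) = 0$, and the composition $\rT_1(\mc{X}) \hookrightarrow \mc{X} \twoheadrightarrow \mc{X}/\mc{I}_i$ is injective. Additivity of $c_1$ on the resulting short exact sequence writes $L_i = \theta_0 s_i$ for some $s_i \in R$. For the formula, I would apply the snake lemma to the commutative diagram with rows $0 \to \mc{I}_i \to \mc{X} \to \mc{X}/\mc{I}_i \to 0$ and $0 \to \mc{J}_i \to \mc{F} \to \mc{F}/\mc{J}_i \to 0$ and vertical maps induced by $\lambda$, obtaining the six-term exact sequence
\begin{equation*}
0 \to \rT_1(\mc{X}) \to K \to \mc{B}_i \to \mc{E} \to C \to 0,
\end{equation*}
where $K$ and $C$ are the kernel and cokernel of the induced map $\mc{X}/\mc{I}_i \to \mc{F}/\mc{J}_i$. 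Pseudo-nullity of $\mc{B}_i$ gives $c_1(\mc{B}_i) = 0$, so the alternating Chern class relation reads $c_1(K) - c_1(C) = \theta_0 - \theta_1$. Factoring that induced map through its image and taking $c_1$ of the two short exact sequences yields $c_1(\mc{X}/\mc{I}_i) - c_1(\mc{F}/\mc{J}_i) = c_1(K) - c_1(C)$, whence $c_1(\mc{F}/\mc{J}_i) = L_i + \theta_1 - \theta_0$, with generator $\tilde{L}_i = \theta_1 s_i$. The equality $c_1(\mc{F}/\mc{J}_i) = c_1(\bigwedge^\ell \mc{F}/\bigwedge^\ell \mc{J}_i)$ is the standard determinantal identity: if $A$ represents $\mc{J}_i \hookrightarrow \mc{F}$ in suitable bases, both sides are $(\det A)$.

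For the middle term of the proposed sequence, fix a generator $e$ of $\bigwedge^\ell \mc{F} \cong R$. Each $\bigwedge^\ell \mc{J}_i$ is free of rank one, hence equals $(b_i) e$ for some $b_i \in R$ with $(b_i) = (\tilde{L}_i)$ by the preceding step, so $\sum_i \bigwedge^\ell \mc{J}_i$ corresponds to $(\tilde{L}_1,\ldots,\tilde{L}_n) \subset R$. The four-term sequence of the lemma exhibits $(\bigwedge^\ell \mc{X})_{\tf}$ as the submodule of $\bigwedge^\ell \mc{F}$ corresponding to $\Fitt(\mc{E})$. Therefore $\bigwedge^\ell \mc{F}/\sum_i \bigwedge^\ell \mc{J}_i \cong R/(\tilde{L}_1,\ldots,\tilde{L}_n)$, and quotienting further by the image of $(\bigwedge^\ell \mc{X})_{\tf}$ gives $R/(\Fitt(\mc{E}) + (\tilde{L}_1,\ldots,\tilde{L}_n)) = \rQ(\mc{E})/(\tilde{L}_1,\ldots,\tilde{L}_n)\rQ(\mc{E})$, matching the third term of the sequence.

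Set $M = (\bigwedge^\ell \mc{X})_{\tf}$, $N = \bigwedge^\ell \mc{F}$, $M_0 = \sum_i (\bigwedge^\ell \mc{I}_i)_{\tf}$, and $N_0 = \sum_i \bigwedge^\ell \mc{J}_i$, with $M_0 \subset M \subset N$ and $M_0 \subset N_0 \subset N$ under the inclusions induced by $\bigwedge^\ell \lambda$. The map $M/M_0 \to N/N_0$ induced by $M \hookrightarrow N$ has cokernel $N/(M+N_0) = \rQ(\mc{E})/(\tilde{L}_1,\ldots,\tilde{L}_n)\rQ(\mc{E})$ by the previous paragraph, giving the claimed exact sequence; its kernel is $(M \cap N_0)/M_0$. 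To bound this kernel's support, I would localize at any prime $\mf{q}$ outside $\bigcup_i \mr{Supp}(\rQ(\mc{B}_i))$: the analogous four-term sequence for $\mc{I}_i \hookrightarrow \mc{J}_i$ has cokernel $\rQ(\mc{B}_i)$, so at $\mf{q}$ each inclusion $(\bigwedge^\ell \mc{I}_i)_{\tf,\mf{q}} \hookrightarrow \bigwedge^\ell \mc{J}_{i,\mf{q}}$ is an equality, hence $M_{0,\mf{q}} = N_{0,\mf{q}} \subseteq M_{\mf{q}}$ and $(M \cap N_0)_{\mf{q}} = N_{0,\mf{q}} = M_{0,\mf{q}}$, so the kernel localizes to zero. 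Pseudo-nullity then follows from pseudo-nullity of each $\rQ(\mc{B}_i) = R/\Fitt(\mc{B}_i)$, since $V(\Fitt(\mc{B}_i)) = \mr{Supp}(\mc{B}_i)$ has codimension at least $2$. The main delicate step is the snake-lemma bookkeeping producing $\tilde{L}_i = \theta_1 L_i/\theta_0$; the rest is formal, with the crucial structural input being that all top exterior powers involved sit inside the common rank-one free module $\bigwedge^\ell \mc{F}$ and can thus be identified with ideals of $R$.
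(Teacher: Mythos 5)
Your proof is correct and follows essentially the same strategy as the paper's: establish the four-term sequence localized in codimension one to compute $c_1(\mc{F}/\mc{J}_i)$, identify each $\bigwedge^\ell \mc{J}_i$ with $\tilde{L}_i\cdot\bigwedge^\ell\mc{F}$ inside the rank-one free module $\bigwedge^\ell\mc{F}$, and compare the two filtrations of $\bigwedge^\ell\mc{F}$ by $M\supset M_0$ and $N_0\supset M_0$. Two minor points of comparison. First, you expand the snake-lemma bookkeeping for the Chern class identity $c_1(\mc{F}/\mc{J}_i)=L_i+\theta_1-\theta_0$ that the paper compresses into the assertion that $0\to\rT_1(\mc{X})\to\mc{X}/\mc{I}_i\to\mc{F}/\mc{J}_i\to\mc{E}\to 0$ is exact after localizing at height-one primes; your version, in particular the observation that $\rT_1(\mc{X})\cap\mc{I}_i=0$ giving $\theta_0\mid L_i$ directly, is a slightly cleaner way to obtain the divisibility, which the paper leaves somewhat implicit. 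Second, for the kernel bound you identify the kernel abstractly as $(M\cap N_0)/M_0$ and then localize away from $\bigcup_i\mr{Supp}(\rQ(\mc{B}_i))$; the paper instead applies the snake lemma to the displayed commutative diagram with rows $\bigoplus_i\bigwedge^\ell\mc{I}_i\to\bigoplus_i\bigwedge^\ell\mc{J}_i\to\bigoplus_i\rQ(\mc{B}_i)\to 0$ and $0\to(\bigwedge^\ell\mc{X})_\tf\to\bigwedge^\ell\mc{F}\to\rQ(\mc{E})\to 0$, identifying the kernel explicitly as $\coker(\ker g\to\ker g')$ (a subquotient of $\bigoplus_i\rQ(\mc{B}_i)$). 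Both suffice for the lemma as stated, but the paper's explicit identification is the input to Corollary~\ref{cor:pseudo} and Remark~\ref{rem:pseudo}, so if you continued with your route you would need to re-derive that description there.
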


\begin{proof}  
The existence of and statements about $\theta_0$, $\theta_1$, and $L_i$ follow from the assumption that $R$ is a UFD.
For $1 \le i \le n$, since $\mc{I}_i \to \mc{J}_i$ is injective with pseudo-null cokernel, the sequence of morphisms 
$$0 \to \rT_1(\mc{X}) \to \frac{\mc{X}}{\mc{I}_i} \to \frac{\mc{F}}{\mc{J}_i} \to \mc{E} \to 0$$
is exact when localized at any codimension one prime of $R$.  
We conclude that 
\begin{equation}
\label{eq:Liformula}
c_1(\mc{F}/\mc{J}_i) = c_1(\mc{X}/\mc{I}_i) + c_1(\mc{E}) - t_1(\mc{X}) = 
c_1(R/R\tilde{L}_i).
\end{equation}
Since $\mc{J}_i$ and $\mc{F}$ are free of rank $\ell$, we see from
 \eqref{eq:Liformula} that the exterior power $\bigwedge^\ell  \mc{J}_i$ is equal to the free rank one submodule 
 $\tilde{L}_i \cdot \bigwedge^\ell \mc{F}$ of $\bigwedge^\ell \mc{F}$.

We have a commutative diagram of $R$-modules with exact rows
 \begin{equation}
    \label{eq:diagram1newaaa}
    		\SelectTips{cm}{} \xymatrix{
    		&
    		\bigoplus_{i=1}^n \bigwedge^\ell I_i \ar[r] \ar[d] & \bigoplus_{i=1}^n \bigwedge^\ell \mc{J}_i 
		\ar[r]^{h} \ar[d]_{g}  &\bigoplus_{i=1}^n \rQ(\mc{B}_i) \ar[r]\ar[d]_{g'}&0\\
    		 0 \ar[r]& (\bigwedge^\ell \mc{X})_{\tf} \ar[r] & \bigwedge^\ell \mc{F} \ar[r] & 
		\rQ(\mc{E}) \ar[r]&0.\\ 
}
    \end{equation}
We can pick generators for the free rank one $R$-modules $\bigwedge^\ell \mc{J}_i$ and $\bigwedge^\ell \mc{F}$
so that the map $g \colon R^n \to R$ has the form $g(\alpha_1,\ldots,\alpha_n) =\sum_{i=1}^n \tilde{L}_i \alpha_i$. 
The snake lemma then yields an exact sequence of $R$-modules on cokernels as in the statement, where the kernel of the first map is the cokernel of the map $\ker g \to \ker g'$ induced by $h$.
\end{proof}

\begin{corollary}
\label{cor:pseudo}
In the notation of Lemma \ref{lem:torsion!}, there are isomorphisms
$$
	\mc{N} := \frac{ \bigwedge^\ell  \mc{F}}{\bigwedge^\ell \mc{J}_1 + \cdots + \bigwedge^\ell  \mc{J}_n } \cong \frac{R}{(\tilde{L}_1, \ldots,  \tilde{L}_n)} \cong \frac{R\theta_0 }{R \theta_1 L_1  + \cdots + R\theta_1 L_n }.$$
Let $\theta$ be a gcd in $R$ of $L_1, \ldots, L_n$.  Then $\theta_0$ divides $\theta$, so  $\nu = \theta_1  \theta/\theta_0$ is in $R$.  The maximal pseudo-null submodule of  $\mc{N}$ is 
$$ \rT_2(\mc{N}) = \nu \mc{N} \cong \frac{R \theta_1 \theta  }{R \theta_1 L_1 + \cdots + R \theta_1 L_n} \cong \frac{R}{ (L_1/\theta, \ldots, L_n/\theta)},$$
and we have an exact sequence of pseudo-null modules 
 \begin{equation}
    \label{eq:inertiatwo}   			
    		0\to \frac{\ker g'}{h(\ker g)} \to
		\rT_2\left ( \frac{(\bigwedge^\ell  \mc{X})_{\tf}}{(\bigwedge^\ell  \mc{I}_1)_{\tf} + \cdots + (\bigwedge^\ell  \mc{I}_n)_{\tf} } \right ) \to
		\nu \mc{N} \to \frac{\nu \rQ(\mc{E})}{(\tilde{L}_1, \ldots, \tilde{L}_n) \rQ(\mc{E})} \to 0 \\   		
\end{equation}
where $g$, $g'$, and $h$ are as in \eqref{eq:diagram1newaaa}.
In particular, if $\rQ(\mc{B}_i)=0$ for all $i$ then $\tilde{L}_i\in \Fitt(\mc{E})$ for all $i$ and (\ref{eq:inertiatwo}) becomes a short exact sequence 
\begin{equation}
    \label{eq:inertiathree}   			
    		0\to \rT_2\left ( \frac{(\bigwedge^\ell  \mc{X})_{\tf}}{(\bigwedge^\ell  \mc{I}_1)_{\tf} + \cdots + (\bigwedge^\ell  \mc{I}_n)_{\tf} } \right ) \to
		\nu \mc{N} \to \nu \rQ(\mc{E}) \to 0. \\   		
\end{equation}
\end{corollary}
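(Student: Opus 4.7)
The plan is to work through the four assertions of the corollary in order.

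First, I would establish the three displayed isomorphisms for $\mc{N}$. Lemma \ref{lem:torsion!} tells us that each $\bigwedge^\ell \mc{J}_i$ equals $\tilde L_i \bigwedge^\ell \mc{F}$ inside the rank-one free module $\bigwedge^\ell \mc{F}$; choosing a generator of the latter identifies $\mc{N}$ with $R/(\tilde L_1,\ldots,\tilde L_n)$. The identity $\theta_0 \tilde L_i = \theta_1 L_i$, together with multiplication by the nonzerodivisor $\theta_0$, supplies the second isomorphism. Since Lemma \ref{lem:torsion!} already yields $\theta_0 \mid L_i$, it divides the gcd $\theta$, so $\nu = \theta_1 \theta/\theta_0$ lies in $R$.

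Next I would identify $\rT_2(\mc{N}) = \nu \mc{N}$. Factor $\tilde L_i = \nu (L_i/\theta)$, and set $J = (L_1/\theta,\ldots,L_n/\theta)$, noting that $\gcd(L_i/\theta)=1$ means no height-one prime of the UFD $R$ contains $J$. Localizing $\mc{N} = R/\nu J$ at a height-one prime $\mf{p}$ gives $0$ if $\nu \notin \mf{p}$, and otherwise $J R_{\mf{p}} = R_{\mf{p}}$ and $\mc{N}_{\mf{p}} = R_{\mf{p}}/\nu R_{\mf{p}}$. The UFD identity $\nu R = \bigcap_{\nu \in \mf{p},\, \mr{ht}\,\mf{p}=1} \nu R_{\mf{p}} \cap R$ shows that the kernel of $\mc{N} \to \bigoplus_{\mf{p}} \mc{N}_{\mf{p}}$, which by definition is $\rT_2(\mc{N})$, equals $\nu \mc{N}$. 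Multiplication by $\nu$ then identifies $\nu \mc{N} \cong R/J$, and substituting this into the second isomorphism of the previous paragraph produces the stated $R\theta_1\theta/(R\theta_1 L_1 + \cdots + R\theta_1 L_n)$.

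For (\ref{eq:inertiatwo}), I would rewrite the content of Lemma \ref{lem:torsion!} as a single four-term exact sequence
$$
0 \to K \to A \to \mc{N} \to Q \to 0,
$$
where $A = (\bigwedge^\ell \mc{X})_{\tf}/\sum (\bigwedge^\ell \mc{I}_i)_{\tf}$, $K = \ker g' / h(\ker g)$ is pseudo-null, and $Q = \rQ(\mc{E})/(\tilde L_i)\rQ(\mc{E})$. Let $A' = \im(A \to \mc{N}) = \ker(\mc{N} \to Q)$. Because $K$ is pseudo-null, the preimage in $A$ of $\rT_2(A') = A' \cap \rT_2(\mc{N}) = A' \cap \nu \mc{N}$ is precisely $\rT_2(A)$, giving a short exact sequence $0 \to K \to \rT_2(A) \to A' \cap \nu \mc{N} \to 0$. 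Since $\tilde L_i = \nu(L_i/\theta)$, we have $(\tilde L_i)\rQ(\mc{E}) \subseteq \nu \rQ(\mc{E})$, so intersecting $0 \to A' \to \mc{N} \to Q \to 0$ with $\nu \mc{N}$ yields $0 \to A' \cap \nu \mc{N} \to \nu \mc{N} \to \nu \rQ(\mc{E})/(\tilde L_i)\rQ(\mc{E}) \to 0$, and splicing produces (\ref{eq:inertiatwo}).

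Finally, assume $\rQ(\mc{B}_i) = 0$ for every $i$. Then the target of $g'$ vanishes, so $K = 0$. To show $\tilde L_i \in \Fitt(\mc{E})$, I would apply the standard Fitting-ideal inclusion $\Fitt(\mc{B}_i)\Fitt(\mc{F}/\mc{J}_i) \subseteq \Fitt(\mc{F}/\lambda(\mc{I}_i))$ to the short exact sequence $0 \to \mc{B}_i \to \mc{F}/\lambda(\mc{I}_i) \to \mc{F}/\mc{J}_i \to 0$ and combine this with $\Fitt(\mc{F}/\lambda(\mc{I}_i)) \subseteq \Fitt(\mc{E})$ (from the surjection $\mc{F}/\lambda(\mc{I}_i) \twoheadrightarrow \mc{E}$) and the presentation-matrix identification $\Fitt(\mc{F}/\mc{J}_i) = (\tilde L_i)$. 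The hypothesis $\Fitt(\mc{B}_i) = R$ then forces $(\tilde L_i)\rQ(\mc{E}) = 0$, collapsing (\ref{eq:inertiatwo}) to (\ref{eq:inertiathree}). I expect the main technical obstacle to be the UFD-based computation $\rT_2(\mc{N}) = \nu \mc{N}$: since $\mc{N}$ is a cyclic quotient by a non-principal ideal, one must work directly with height-one localizations and use the primary structure of $\nu$ to separate the codimension-one and pseudo-null parts cleanly.
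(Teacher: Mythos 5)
Your proof is correct and follows the route the paper leaves implicit: the corollary is an immediate unwinding of Lemma~\ref{lem:torsion!}, diagram~\eqref{eq:diagram1newaaa}, and the observations in Remark~\ref{rem:pseudo}, plus the standard UFD fact that a principal ideal $\nu R$ is the intersection of its height-one localizations. Your computations of the three isomorphisms, of $\rT_2(\mc{N}) = \nu\mc{N}$, and the splicing of $0 \to K \to \rT_2(A) \to A' \cap \nu\mc{N} \to 0$ with $0 \to A' \cap \nu\mc{N} \to \nu\mc{N} \to \nu\rQ(\mc{E})/(\tilde L_i)\rQ(\mc{E}) \to 0$ all check out, and the Fitting-ideal chain
$\Fitt(\mc{B}_i)(\tilde L_i) = \Fitt(\mc{B}_i)\Fitt(\mc{F}/\mc{J}_i) \subseteq \Fitt(\mc{F}/\lambda(\mc{I}_i)) \subseteq \Fitt(\mc{E})$
is exactly what Remark~\ref{rem:pseudo} records as $\tilde L_i \cdot \Fitt(\mc{B}_i) \subseteq \Fitt(\mc{E})$. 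One slip in the final paragraph: when $\rQ(\mc{B}_i) = 0$ it is the \emph{source} of $g'$ (namely $\bigoplus_i \rQ(\mc{B}_i)$), not the target, that vanishes; this is what forces $\ker g' = 0$ and hence $K = 0$.
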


\begin{remark} \label{rem:pseudo}
	From the proof of Lemma \ref{lem:torsion!}, and in particular diagram \eqref{eq:diagram1newaaa}, we see that
	$\tilde{L}_i \cdot \Fitt(\mc{B}_i) \subset \Fitt(\mc{E})$, and the kernel of the first term of the exact sequence in 
	\eqref{eq:inertiatwo} is the cokernel of the map
	\begin{equation*} \label{eq:nicecoker}
		\ker(R^n \xrightarrow{g} R) \xrightarrow{h} \ker\left(\bigoplus_{i=1}^n \rQ(\mc{B}_i) \xrightarrow{g'} \rQ(\mc{E})\right)
	\end{equation*}
	where $g(\alpha_1, \ldots, \alpha_n) = \sum_{i=1}^n \tilde{L}_i \alpha_i$, the map $h$ is induced by the canonical quotient
	map $R^n \to \bigoplus_{i=1}^n \rQ(\mc{B}_i)$, and $g'$ is the map induced by $g$. 
	Alternatively, we have
	\begin{multline*}
	\ker(g') = \left\{(\overline{\alpha}_i)_i \;\Big|\; \sum_{i=1}^n \tilde{L}_i \alpha_i \in \Fitt(\mc{E})\right\} \\ \supset h(\ker(g)) = 
	\left\{(\overline{\alpha}_i)_i \;\Big|\; \sum_{i=1}^n \tilde{L}_i \alpha_i \in \sum_{i=1}^n \tilde{L}_i \Fitt(\mc{B}_i) \right\},
	\end{multline*}
	where $\overline{\alpha}_i$ denotes the image of $\alpha_i \in R$ in $\rQ(\mc{B}_i)$.
\end{remark}

\section{Main theorems}
\label{s:maintheorems}

We keep the notation and assumptions of Section \ref{s:CM}. 
That is, we work with a CM field $E$ of degree $2d$, a prime $p$ such that
all primes over it split in $E/E^+$, and a $p$-adic character $\psi$ of the absolute Galois group of $E$. We again have
\begin{itemize}
	\item the fields $F = E_{\psi}(\mu_p)$ and $K = F\widetilde{E}$ for the compositum $\widetilde{E}$ of $\zp$-extensions of $E$,
	\item the Galois groups $\mc{G} = \Gal(K/E)$ and $\Gamma = \Gal(\widetilde{E}/E)$, and 
	\item the Iwasawa algebras $\Omega = \zp\ps{G}$ and $\Lambda = W\ps{\Gamma}$ for $W$ the Witt vectors of $\overline{\F_p}$. 
\end{itemize}
For the definitions of the Iwasawa modules $X_P$, $X^{\flat}_P$, $Y_P$, $\mc{K}_P$, $\mc{K}_{P,0}$, $I_P$, and $D_P$, ranks $r_P$, and degrees $d_P$ attached to subsets $P$ of the set $S_f$ of primes over $p$, we refer the reader to \eqref{eq:Y}-\eqref{eq:DI} and just prior, as well as to \eqref{eq:d}.
Recall that for a compact $\Omega$-module $A$, we denote by $\bigwedge^\ell A^\psi$ the $\ell$th exterior power over $\Lambda$ of 
the eigenspace $A^\psi$ of
$A$ defined in \eqref{eq:eig}.  Moreover, if $A^\psi$ is a finitely generated $\Lambda$-module, then $\Fitt(A^\psi)$ denotes its $0$th Fitting ideal in $\Lambda$.

For $n \ge 1$, let $\mc{S}_1, \ldots, \mc{S}_n$ be distinct CM types of primes over $p$
viewed as subsets of the set $S_f$ of all primes over $p$ in $E$.  Let 
$$
	\mc{S} = \bigcup_{i=1}^n \mc{S}_i.
$$
The complement of $\mc{S}$ is then given by 
$$
	\mc{S}^c = \bigcap_{i=1}^n \mc{S}_i^c = \bigcap_{i=1}^n \overline{\mc{S}_i}.
$$
Set $\mc{T}_i = \mc{S} - \mc{S}_i$ 
for $1 \le i \le n$, and let 
$$
	\mc{T}= \mc{S} - \bigcap_{i=1}^n \mc{S}_i = \bigcup_{i=1}^n \mc{T}_i.
$$
Let 
$$
	\ell = \rank_{\Lambda} Y_{\mc{S}}^{\psi} = d-d_{\mc{S}^c},
$$ 
and note that $\ell = \rank_{\Lambda} I_{\mc{T}_i}^{\psi}$ for all $i$ by \eqref{eq:rankID}.
Recall that $\mc{L}_{\mc{S}_i,\psi} \in \Lambda$ is 
taken to be an element satisfying
$c_1(X_{\mc{S}_i}^{\psi}) = (\mc{L}_{\mc{S}_i,\psi})$.
 
We have that $r_{\mf{p}} = d_{\mf{p}} + 1 \ge 2$ for each $\mf{p} \in S_f$ by Lemma \ref{lem:sweepup}. Thus, by Remarks \ref{rem:conseqD=I}
and \ref{EKp}, for every $P \subset S_f$ we have
\begin{itemize}
	\item $I_P = D_P$, 
	\item $X_P \to X^{\flat}_P$ is an isomorphism, 
	\item $\mc{K}_P$ is supported in codimension $\min\{r_{\mf{p}} \mid \mf{p} \in P\}$, and
	\item $X_P \to Y_P$ is an injective pseudo-isomorphism.
\end{itemize}
We will use these facts without further reference.

Since we next work with eigenspaces that are $\Lambda$-modules, it is useful to compare their support with those of the 
original $\Omega$-modules.  For this, we have the following remark.  

\begin{remark} \label{rem:eigsp}
    Since $\Omega \cong \zp\ps{\Gamma}[\Delta]$ and $\Delta$ is of prime-to-$p$ order, 
    every prime ideal of $\Omega$ is the inverse image of a prime ideal of the quotient 
    $\Omega \cdot e_{\psi} \cong \mc{O}_{\psi}\ps{\Gamma}$
    for an idempotent $e_{\psi} \in \zp[\Delta]$ arising from the $G_{\qp}$-conjugacy class of a $p$-adic character $\psi$ of $\Delta$,
    where $\mc{O}_{\psi}$ denotes the $\zp$-algebra generated by the values of $\psi$.  
    
    Let us show that a prime $\mf{q}$ of $\Lambda$ is in the support of 
    $M^\psi = M \hat{\otimes}_{\zp[\Delta],\psi} W$ for a finitely generated $\Omega$-module $M$ if and only if the inverse image of
    $\mf{q}' = \mf{q} \cap \mc{O}_{\psi}\ps{\Gamma}$ in $\Omega$ is in the support of $M$.  This will allow us to apply the results 
    of Section \ref{s:duality} to study the $\Delta$-eigenspaces of our arithmetically-interesting $\Omega$-modules, as we shall do below.
    
    Let $N$ be the $\mc{O}_{\psi}\ps{\Gamma}$-module $M \hat{\otimes}_{\zp[\Delta],\psi} \mc{O}_{\psi}$, so that $M^\psi = N_W$
    for $N_W = N \hat{\otimes}_{\mc{O}_{\psi}} W$.  It will suffice to show $\mf{q}$ is in the support of $N_W$ if and only if $\mf{q}'$
    is in the support of $N$.  Let $F' \to F \to N \to 0$ be an exact sequence of $\mc{O}_{\psi}\ps{\Gamma}$-modules in which
    $F$ and $F'$ are  free of finite ranks $r$ and $s$, respectively.  This sequence defines an $r\times s$ presentation matrix
    $B$ after choosing bases for $F$ and $F'$.  The prime $\mf{q}'$ is not 
    in the support of $N$ if and only if some maximal minor of $B$ has determinant not in $\mf{q}'$.  Taking completed tensor products over $W$
    is a right exact functor on pseudo-compact $\mc{O}_\psi$-modules by \cite[Sect.~0.3.2]{Gabriel}, so $(F')_W \to F_W \to N_W \to 0$ is exact.
    It follows that $\mf{q}$ is not in the support of $N_W$ if and only if some maximal minor of $B$ has determinant which is not in $\mf{q}$.
    Our claim is now clear since the determinants of all the maximal minors lie in $\mc{O}_{\psi}\ps{\Gamma}$, and $\mf{q}' = \mf{q} \cap \mc{O}_{\psi}\ps{\Gamma}$.
    \end{remark}

We may now state and prove our first main theorem.

\begin{theorem} \label{thm:first_big_thm}
	Let $\mf{q}$ be a prime of $\Lambda$ not in the support of 
	$$
		(X_{\mc{S}^c}^{\omega\psi^{-1}})^{\iota}(1) \oplus (\mc{K}_{\mc{S},0}^{\omega\psi^{-1}})^{\iota}(1) 
		\oplus \mc{K}_{\overline{\mc{S}},0}^{\psi}.
	$$  
	Then we have an isomorphism of $\Lambda_{\mf{q}}$-modules
	$$
		\frac{\bigwedge^\ell X_{\mc{S},\mf{q}}^{\psi}}{\bigwedge^\ell I_{\mc{T}_1,\mf{q}}^{\psi}  
   	 	+ \cdots + \bigwedge^\ell I_{\mc{T}_n,\mf{q}}^{\psi} } \cong
		\frac{\Lambda_{\mf{q}}}{(\mc{L}_{\mc{S}_1,\psi},\ldots,\mc{L}_{\mc{S}_n,\psi})}.
	$$
\end{theorem}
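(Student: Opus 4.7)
The plan is to prove the theorem in three steps: first show $X_{\mc{S},\mf{q}}^\psi$ is a free $\Lambda_\mf{q}$-module of rank $\ell$; next show each $I_{\mc{T}_i,\mf{q}}^\psi$ is also free of rank $\ell$; and finally apply the abstract exterior-power computation of Corollary~\ref{cor:pseudo} to identify the resulting quotient with $\Lambda_\mf{q}/(\mc{L}_{\mc{S}_1,\psi},\ldots,\mc{L}_{\mc{S}_n,\psi})$.

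For the first step I would start from the exact sequence $0 \to X_{\mc{S}^c} \to Y_{\mc{S}^c} \to \mc{K}_{\mc{S},0} \to 0$ of Lemma~\ref{lem:needlater}. Passing to $\omega\psi^{-1}$-eigenspaces and applying $(-)^{\iota}(1)$ (interpreted through Remark~\ref{rem:eigsp}), hypotheses (1) and (2) together place $\mf{q}$ outside the $\psi$-eigenspace support of $Y_{\mc{S}^c}^{\iota}(1)$. Theorem~\ref{thm:free} then yields that $Y_{\mc{S},\mf{q}}^\psi$ is $\Lambda_\mf{q}$-free, of rank $\ell$ by Lemma~\ref{lem:ranklemma}. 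Another application of Lemma~\ref{lem:needlater} gives $0 \to X_{\mc{S}}^\psi \to Y_{\mc{S}}^\psi \to \mc{K}_{\mc{S}^c,0}^\psi \to 0$, and since $\mc{S}$ contains a CM type, every prime of $\mc{S}^c$ has its conjugate in $\mc{S}$, so $\mc{S}^c \subset \overline{\mc{S}}$ and $\mc{K}_{\mc{S}^c,0}^\psi$ embeds in $\mc{K}_{\overline{\mc{S}},0}^\psi$. Hypothesis (3) kills the cokernel at $\mf{q}$, giving $X_{\mc{S},\mf{q}}^\psi \xrightarrow{\sim} Y_{\mc{S},\mf{q}}^\psi$.

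The key observation for Step~2 is the containment $\mc{T}_i \subset \mc{S} \cap \overline{\mc{S}}$: if $\mf{p} \in \mc{T}_i \subset \mc{S}$, then $\mf{p} \notin \mc{S}_i$ forces $\overline{\mf{p}} \in \mc{S}_i \subset \mc{S}$, so $\mf{p} \in \overline{\mc{S}}$. By Lemma~\ref{lem:sweepup}(ii) and Remark~\ref{rem:conseqD=I} we have $I_\mf{p} = D_\mf{p}$, so it suffices to prove each $D_{\mf{p},\mf{q}}^\psi$ is $\Lambda_\mf{q}$-free of rank $d_\mf{p}$; summing over $\mf{p} \in \mc{T}_i$ then gives $I_{\mc{T}_i,\mf{q}}^\psi$ free of rank $d_{\mc{T}_i} = \ell$. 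Applying Corollary~\ref{cor:localfree} at $\psi$-eigenspaces requires $\mf{q}$ to miss the support of $(\mc{K}_\mf{p}^{\omega\psi^{-1}})^{\iota}(1)$ (a summand of $(\mc{K}_{\mc{S}}^{\omega\psi^{-1}})^{\iota}(1)$ because $\mf{p} \in \mc{S}$) and, when $r_\mf{p} \ge 3$, the support of $\mc{K}_\mf{p}^{\psi}$ (a summand of $\mc{K}_{\overline{\mc{S}}}^{\psi}$ because $\mf{p} \in \overline{\mc{S}}$); since $\mc{K}_P^{\chi}$ and $\mc{K}_{P,0}^{\chi}$ differ only by the trivial-character augmentation summand, hypotheses (2) and (3) supply exactly these conditions.

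Step~3 is then a direct application of Corollary~\ref{cor:pseudo} with $\mc{X} = \mc{F} = X_{\mc{S},\mf{q}}^\psi$ and $\mc{I}_i = \mc{J}_i$ the image of $I_{\mc{T}_i,\mf{q}}^\psi$ (injective with torsion cokernel $X_{\mc{S}_i,\mf{q}}^\psi$ by Lemma~\ref{lem:ranklemma} and the identification $X_{\mc{S}}^\psi/I_{\mc{T}_i}^\psi \cong X_{\mc{S}_i}^\psi$). Here $\mc{E} = 0$ and every $\mc{B}_i = 0$, so $\theta_0$ and $\theta_1$ are units, the pseudo-null error terms in the corollary vanish, and the generators $L_i$ agree with $\mc{L}_{\mc{S}_i,\psi}$ up to units. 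The main obstacle is Step~2: the two support conditions in the hypothesis look asymmetric (one twisted by $\omega\psi^{-1}$ and by $(-)^{\iota}(1)$, the other not), and the whole argument depends on the observation $\mc{T}_i \subset \mc{S} \cap \overline{\mc{S}}$, which is precisely what aligns the local freeness requirements of Corollary~\ref{cor:localfree} with the two global conditions that are actually hypothesized.
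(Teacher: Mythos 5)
Your proof follows essentially the same route as the paper's: prove $X_{\mc{S},\mf{q}}^\psi$ is $\Lambda_{\mf{q}}$-free via Theorem~\ref{thm:free} and Lemma~\ref{lem:needlater}, prove each $I_{\mc{T}_i,\mf{q}}^\psi$ is free via Corollary~\ref{cor:localfree}, and then read off the exterior-power quotient as $\Lambda_{\mf{q}}/(\mc{L}_{\mc{S}_1,\psi},\ldots,\mc{L}_{\mc{S}_n,\psi})$. Your key bookkeeping observations ($\mc{T}_i \subset \mc{S}\cap\overline{\mc{S}}$ and $\mc{S}^c\subset\overline{\mc{S}}$) are exactly the content of the paper's decomposition $\overline{\mc{S}}=\mc{S}^c\cup\mc{T}$, and your invocation of Corollary~\ref{cor:pseudo} (with $\mc{X}=\mc{F}$, $\mc{E}=0$, $\mc{B}_i=0$) reproduces the determinant argument the paper states directly.

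There is one genuine gap in Step~1. Theorem~\ref{thm:free} requires $\mf{q}$ to lie outside the support of $(Y_{\mc{S}^c}^{\omega\psi^{-1}})^{\iota}(1)\oplus Z_{\mc{S}}^\psi$, not just the first summand. You reduce the $Y_{\mc{S}^c}$ piece to hypotheses (1) and (2) via Lemma~\ref{lem:needlater}, but never dispose of $Z_{\mc{S}}^\psi$. This term is nonzero when $\mc{S}=S_f$ (which can occur for $n\ge 2$), $r\ge 2$ (which is automatic here since $r\ge d+1$), and $\psi$ is trivial; in that case $Z_{\mc{S}}^\psi\cong W$ has support all of $\mr{Spec}(\Lambda)$ and your Step~1 as written does not apply. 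The fix is the same observation the paper makes: when $n\ge 2$, $\overline{\mc{S}}$ has at least two elements, so $\zp^\psi$ is a subquotient of $\mc{K}_{\overline{\mc{S}},0}^\psi$ and hence the support of $Z_{\mc{S}}^\psi$ is already covered by hypothesis (3); when $n=1$, $\mc{S}\neq S_f$ and $Z_{\mc{S}}=0$. Relatedly, in Step~2 the statement that ``$\mc{K}_P^\chi$ and $\mc{K}_{P,0}^\chi$ differ only by the trivial-character augmentation summand'' is a bit loose --- $\mc{K}_{P,0}$ is a submodule of $\mc{K}_P$, not a complement of a summand --- but the support containment you need does hold for the same reason: when $\chi$ is nontrivial they are equal, and when $\chi$ is trivial and $|P|\ge 2$ the quotient $\zp^\chi$ is a subquotient of $\mc{K}_{P,0}^\chi$, so $\mr{Supp}(\mc{K}_P^\chi)=\mr{Supp}(\mc{K}_{P,0}^\chi)$. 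With these two points made explicit your argument is complete and matches the paper.
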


\begin{proof}  
Let $\mf{q}$ be a prime of $\Lambda$.
If $X_{\mc{S},\mf{q}}^{\psi}$ is free, then we have an isomorphism
$\bigwedge^{\ell} X_{\mc{S},\mf{q}}^{\psi} \cong \Lambda_{\mf{q}}$.  If $I_{\mc{T}_i,\mf{q}}^{\psi}$ is free,
then since
$c_1(X_{\mc{S},\mf{q}}^{\psi}/I_{\mc{T}_i,\mf{q}}^{\psi}) = (\mc{L}_{\mc{S}_i,\psi})$, 
 this isomorphism
takes the free rank one submodule $\bigwedge^{\ell} I_{\mc{T}_i,\mf{q}}^{\psi}$ to 
$(\mc{L}_{\mc{S}_i,\psi})$.   So, we need only avoid those $\mf{q}$ such that $X_{\mc{S},\mf{q}}^{\psi}$
or some $I_{\mc{T}_i,\mf{q}}^{\psi}$ is not free.

 By Theorem \ref{thm:free} (noting Remark \ref{rem:eigsp}), the module $Y_{\mc{S},\mf{q}}^{\psi}$ is free for $\mf{q}$ outside
 the support of $(Y_{\mc{S}^c}^{\omega \psi^{-1}})^\iota (1) \oplus Z_{\mc{S}}^\psi$, with $Z_{\mc{S}}$ 
 as in \eqref{eq:Z}.
Lemma \ref{lem:needlater} provides an exact sequence
$$0 \to (X_{\mc{S}^c}^{\omega \psi^{-1}})^\iota (1)  \to (Y_{\mc{S}^c}^{\omega 
\psi^{-1}})^\iota (1)  \to (\mc{K}_{\mc{S},0}^{\omega \psi^{-1}})^\iota (1)  \to 0.$$
So, $Y_{\mc{S},\mf{q}}^{\psi}$ is free for $\mf{q}$ not in the support of 
$(X_{\mc{S}^c}^{\omega \psi^{-1}})^\iota (1)  \oplus (\mc{K}_{\mc{S},0}^{\omega \psi^{-1}})^{\iota}(1) \oplus Z_{\mc{S}}^\psi$.
Similarly, the homomorphism $X_{\mc{S},\mf{q}}^{\psi} \to Y_{\mc{S},\mf{q}}^{\psi}$ is an isomorphism 
for $\mf{q}$ not in the support of $\mc{K}_{\mc{S}^c,0}^\psi$ by Lemma \ref{lem:needlater}.
Finally, Corollary \ref{cor:localfree} tells us that every $I_{\mc{T}_i,\mf{q}}^{\psi}$ is free for
$\mf{q}$ not in the support of $\mc{K}_{\mc{T}}^\psi \oplus (\mc{K}_{\mc{T}}^{\omega \psi^{-1}})^{\iota}(1)$.

Together, the above conditions say that the desired isomorphism holds if we avoid primes in the support of
\begin{equation} \label{eq:toomuch}
	(X_{\mc{S}^c}^{\omega \psi^{-1}})^\iota (1)  \oplus (\mc{K}_{\mc{S},0}^{\omega \psi^{-1}})^{\iota}(1) \oplus Z_{\mc{S}}^\psi
	\oplus \mc{K}_{\mc{S}^c,0}^\psi \oplus \mc{K}_{\mc{T}}^\psi \oplus (\mc{K}_{\mc{T}}^{\omega \psi^{-1}})^{\iota}(1).
\end{equation}
This may be simplified to the statement of the theorem by the following observations.  If $n = 1$, then $\mc{S} \neq S_f$,
so $Z_{\mc{S}}^{\psi} = 0$, and $\mc{T} = \varnothing$, so $\mc{K}_{\mc{T}} = 0$.  Moreover $\mc{S}^c = \overline{\mc{S}}$ in this
case.  If $n \ge 2$, then note that $\overline{\mc{S}} = \mc{S}^c \cup \mc{T}$ and $\mc{T} \subset \mc{S}$.  
Both $\mc{S}$ and its conjugate set
$\overline{\mc{S}}$ have more than one element. 
This implies that $\zp^{\psi}$ is a subquotient
of $\mc{K}_{\overline{\mc{S}},0}^{\psi}$ and $\zp^{\omega\psi^{-1}}(1)$ is a subquotient
of $(\mc{K}_{\mc{S},0}^{\omega \psi^{-1}})^{\iota}(1)$. In turn, these two facts yield  that the supports of the third, fourth, and fifth terms 
in \eqref{eq:toomuch} are contained in the support of $\mc{K}_{\overline{\mc{S}},0}^{\psi} = \ker(\mc{K}_{\mc{T}}^{\psi} \oplus \mc{K}_{\mc{S}^c}^{\psi} \to \zp^{\psi})$, and the support of the last term is contained in the support of the second.
\end{proof}

\begin{remark}
	Regarding the disallowed primes in Theorem \ref{thm:first_big_thm}, note that
	$$
		(\mc{K}_{\mc{S}}^{\omega\psi^{-1}})^{\iota}(1) \cong \mc{K}_{\mc{S}}^{\psi\omega^{-1}}(1)
	$$
	as $\Lambda[\Delta]$-modules by Remark \ref{rem:noiota} (in fact, $\mc{K}_{\mc{S}}^{\psi\omega^{-1}} \cong
	\mc{K}_{\mc{S}}^{\omega\psi^{-1}}$ as $\Lambda$-modules as well), but we have written it as we have to exhibit
	a certain symmetry.
\end{remark}

The following notation is used in the statements of the various theorems in this section.

\begin{definition}
\label{def:Us}
Let  $\mc{U}_{\mf{p},\psi}$ (resp. $\overline{\mc{U}}_{\mf{p},\psi}$) denote the set of codimension two primes of $\Lambda$ in the support of $\mc{K}_{\mf{p}}^{\psi}$ 
 (resp. $(\mc{K}_{\mf{p}}^{\omega\psi^{-1}})^{\iota}(1)$).  For all subsets $\Sigma$ of $
 S_f$, let $$\mc{U}_{\Sigma,\psi} = \bigcup_{\mf{p} \in \Sigma} \ \mc{U}_{\mf{p},\psi} \quad \mathrm{and}\quad 
\overline{\mc{U}}_{\Sigma,\psi} = \bigcup_{\mf{p} \in \Sigma} \ \overline{\mc{U}}_{\mf{p},\psi}.$$ Define $\mc{Z}_{\Sigma,\psi}$ to be the free abelian group on $\mc{V}_{\Sigma,\psi} =  \mc{U}_{\Sigma^c,\psi} \cup \overline{\mc{U}}_{\Sigma,\psi}$, 
which we view a direct summand
of the free abelian group on the codimension two primes of $\Lambda$.
\end{definition}

\begin{remark}
	\label{rem:Us}
	By the discussion of Section \ref{s:CM}, 
	the set $\mc{U}_{\mf{p},\psi}$ is nonempty if and only if $r_{\mf{p}} = 2$ and $\psi|_{\Delta_{\mf{p}}} = 1$, in which 
	case $\mc{K}_{\mf{p}}^{\psi}$ is isomorphic to $W\ps{\Gamma/\Gamma_{\mf{p}}}$.  Similarly, 
	$\overline{\mc{U}}_{\mf{p},\psi} \ne \varnothing$ if and only if $r_{\mf{p}} = 2$ and $\omega\psi^{-1}|_{\Delta_{\mf{p}}} = 1$, 
	in which case $(\mc{K}_{\mf{p}}^{\omega\psi^{-1}})^{\iota}(1)$ is isomorphic to $W\ps{\Gamma/\Gamma_{\mf{p}}}(1)$.
	
	The groups $\Gamma_{\mf{p}} \otimes_{\zp} \qp$ and $\Gamma_{\overline{\mf{p}}} \otimes_{\zp} \qp$ are the same inside 
	$\Gamma \otimes_{\zp} \qp$ 
	if $\mf{p}$ and $\overline{\mf{p}}$ are conjugate primes in $S_f$. 
	For any CM type $\Sigma$, we have 
	$$
		\Gamma^- \otimes_{\zp} \qp = \bigoplus_{\mf{p} \in \Sigma} (\Gamma^-)_{\mf{p}} \otimes_{\zp} \qp
	$$
	from the proof of Lemma \ref{lem:sweepup}.
	Thus, if $\mf{p}$ and $\mf{p}'$ are distinct, non-conjugate primes, then $\Gamma_{\mf{p}} \cap \Gamma_{\mf{p}'}$
	has rank at most one and $r \ge 3$, so $\mc{U}_{\mf{p},\psi} \cap \mc{U}_{\mf{p}',\psi} = \varnothing$ and 
	$\overline{\mc{U}}_{\mf{p},\psi} \cap \overline{\mc{U}}_{\mf{p}',\psi} = \varnothing$.  Since
	$\Gamma_{\mf{p}}$ acts trivially on $W\ps{\Gamma/\Gamma_{\mf{p}}}$ and via the $p$-adic cyclotomic character on
	$ W\ps{\Gamma/\Gamma_{\mf{p}}}(1)$, we have that $\mc{U}_{\mf{p},\psi} \cap \overline{\mc{U}}_{\mf{p}',\psi} = \varnothing$ 
	for all $\mf{p}, \mf{p}' \in S_f$, as can also be seen from Remark \ref{rem:explicit}.
\end{remark}

The following theorem is an extension of Theorem A without its assumption on $\psi$.  
In Theorem \ref{thm:unconditionalBIG} below, we will provide a more general
result in which we eliminate the appearance of $\mc{Z}_{\mc{S},\psi}$ at the cost of introducing kernels and cokernels of maps  between pseudo-null modules which are difficult to compute explicitly.  

\begin{theorem}  
\label{thm:unconditional}
Any generator $\theta_0$ of $t_1(X_{\mc{S}}^\psi)$ divides any gcd $\theta$ in $\Lambda$ of 
$\mc{L}_{\mc{S}_1,\psi}, \ldots, \mc{L}_{\mc{S}_n,\psi}$,  
and we have a congruence of second Chern classes 
\begin{multline}
\label{eq:c2formulaquartic}
c_2\left(\frac{\Lambda }{(\mc{L}_{\mc{S}_1,\psi}/\theta, \ldots, \mc{L}_{\mc{S}_n,\psi}/\theta)} \right)   \equiv
t_2\left( \frac{(\bigwedge^\ell  X_{\mc{S}}^\psi)_{\tf}}{(\bigwedge^\ell  I_{\mc{T}_1}^\psi)_{\tf}  + \cdots + (\bigwedge^\ell  I_{\mc{T}_n}^\psi)_{\tf} }\right )  
\\
+ c_2\left (\frac{\theta}{\theta_0} \cdot \frac{\Lambda}{\Fitt(\rE^2(X_{\mc{S}^c}^{ \omega\psi^{-1} })(1))}\right ) \bmod \mc{Z}_{\mc{S},\psi}.
\end{multline}
\end{theorem}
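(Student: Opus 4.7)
The plan is to apply Corollary \ref{cor:pseudo} with $\mc{X} = X_{\mc{S}}^{\psi}$, $\mc{F} = (X_{\mc{S}}^{\psi})^{**}$, $\mc{I}_i = I_{\mc{T}_i}^{\psi}$, and $\mc{J}_i = (I_{\mc{T}_i}^{\psi})^{**}$ inside $\mc{F}$, and to read off the congruence from the resulting exact sequence of pseudo-null modules.  Although these reflexive $\Lambda$-modules of rank $\ell$ need not be globally free, they become free upon localization at any codimension-two prime of $\Lambda$, since $\Lambda_{\mf{q}}$ is a two-dimensional regular local ring in which reflexive equals free.  As both $t_2$ and $c_2$ are sums of local lengths over height-two primes, it suffices to verify the claimed equality prime by prime, and we do so at each $\mf{q} \notin \mc{V}_{\mc{S},\psi}$.

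For the divisibility $\theta_0 \mid \theta$, Lemma \ref{lem:ranklemma} says $I_{\mc{T}_i}^{\psi}$ is torsion-free, so $I_{\mc{T}_i}^{\psi} \cap \rT_1(X_{\mc{S}}^{\psi}) = 0$.  The short exact sequence $0 \to I_{\mc{T}_i}^{\psi} \to X_{\mc{S}}^{\psi} \to X_{\mc{S}_i}^{\psi} \to 0$ therefore induces the derived sequence $0 \to \rT_1(X_{\mc{S}}^{\psi}) \to X_{\mc{S}_i}^{\psi} \to (X_{\mc{S}}^{\psi})_{\tf}/I_{\mc{T}_i}^{\psi} \to 0$; taking first Chern classes yields $(\mc{L}_{\mc{S}_i,\psi}) = (\theta_0) \cdot c_1((X_{\mc{S}}^{\psi})_{\tf}/I_{\mc{T}_i}^{\psi})$, so $\theta_0$ divides each $\mc{L}_{\mc{S}_i,\psi}$ and hence $\theta$.

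The cokernel $\mc{E} = \mc{F}/X_{\mc{S}}^{\psi} = (X_{\mc{S}}^{\psi})^{**}/(X_{\mc{S}}^{\psi})_{\tf}$ is pseudo-null, so $\theta_1 = 1$ and $\nu = \theta/\theta_0$.  At any $\mf{q} \notin \mc{V}_{\mc{S},\psi}$, Lemma \ref{lem:needlater} identifies $X_{\mc{S},\mf{q}}^{\psi}$ with $Y_{\mc{S},\mf{q}}^{\psi}$ (since $\mc{K}_{\mc{S}^c,0}^{\psi}$ has codimension-two support in $\mc{U}_{\mc{S}^c,\psi}$), and then Proposition \ref{prop:iwseq} identifies $\mc{E}_{\mf{q}}$ with $\rE^2(Y_{\mc{S}^c}^{\omega\psi^{-1}})(1)_{\mf{q}}$.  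Feeding the short exact sequence $0 \to X_{\mc{S}^c}^{\omega\psi^{-1}} \to Y_{\mc{S}^c}^{\omega\psi^{-1}} \to \mc{K}_{\mc{S},0}^{\omega\psi^{-1}} \to 0$ of Lemma \ref{lem:needlater} into the $\Ext$ long exact sequence, and using that $\rE^2(\mc{K}_{\mc{S},0}^{\omega\psi^{-1}})(1)$ is supported in $\overline{\mc{U}}_{\mc{S},\psi}$, refines this to $\mc{E}_{\mf{q}} \cong \rE^2(X_{\mc{S}^c}^{\omega\psi^{-1}})(1)_{\mf{q}}$.  In parallel, Proposition \ref{prop:localseq} shows that $\mc{B}_i = (I_{\mc{T}_i}^{\psi})^{**}/I_{\mc{T}_i}^{\psi} \cong \rE^2(\mc{K}_{\mc{T}_i}^{\omega\psi^{-1}})(1)$ has support in $\overline{\mc{U}}_{\mc{T}_i,\psi} \subset \mc{Z}_{\mc{S},\psi}$, so $\mc{B}_{i,\mf{q}} = 0$ at $\mf{q}$.

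With $\mc{B}_{i,\mf{q}} = 0$, Remark \ref{rem:pseudo} yields $\tilde{L}_i \in \Fitt(\mc{E})_{\mf{q}}$ and forces $\ker g'/h(\ker g)$ to vanish at $\mf{q}$, so the exact sequence \eqref{eq:inertiathree} of Corollary \ref{cor:pseudo} localized at $\mf{q}$ is a short exact sequence of finite-length $\Lambda_{\mf{q}}$-modules.  Summing the resulting length identity over $\mf{q} \notin \mc{V}_{\mc{S},\psi}$, and using $\nu \mc{N} \cong \Lambda/(\mc{L}_{\mc{S}_1,\psi}/\theta, \ldots, \mc{L}_{\mc{S}_n,\psi}/\theta)$ and $\nu \rQ(\mc{E}) \cong (\theta/\theta_0)\,\Lambda/\Fitt(\rE^2(X_{\mc{S}^c}^{\omega\psi^{-1}})(1))$, gives the desired congruence.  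The main obstacle is the identification $\mc{E}_{\mf{q}} \cong \rE^2(X_{\mc{S}^c}^{\omega\psi^{-1}})(1)_{\mf{q}}$: this requires chaining three exact sequences and carefully checking that every pseudo-null discrepancy introduced is supported precisely in $\mc{V}_{\mc{S},\psi}$, which is exactly what causes $\mc{Z}_{\mc{S},\psi}$ to appear as the modulus.
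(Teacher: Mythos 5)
Your proposal is correct and follows essentially the same route as the paper: localize at codimension-two primes outside $\mc{V}_{\mc{S},\psi}$, invoke Lemma \ref{lem:needlater}, Proposition \ref{prop:localseq}, and Corollary \ref{cor:localfree} to identify $\mc{E}_{\mf{q}}$ with $\rE^2(X_{\mc{S}^c}^{\omega\psi^{-1}})(1)_{\mf{q}}$ and to show $\mc{B}_{i,\mf{q}} = 0$, then read off the equality from the short exact sequence \eqref{eq:inertiathree} in Corollary \ref{cor:pseudo}. The only cosmetic difference is that you spell out the divisibility $\theta_0 \mid \mc{L}_{\mc{S}_i,\psi}$ via the explicit snake-lemma sequence, whereas the paper absorbs this into the abstract statement of Lemma \ref{lem:torsion!}.
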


\begin{proof}
To match the notation of Section \ref{sec:exterior} and Lemma \ref{lem:torsion!},
let $R$ be the localization of $\Lambda$ at a codimension two prime $\mf{q}$ not in $\mc{V}_{\mc{S},\psi}$, and set
$\mc{X} = X_{\mc{S},\mf{q}}^{\psi}$ and $\mc{F} = (X_{\mc{S},\mf{q}}^{\psi})^{**}$.  
Since $\mf{q} \notin \mc{U}_{\mc{S}^c,\psi}$,
Lemma \ref{lem:needlater} tells us that the injection $X_{\mc{S},\mf{q}}^{\psi} \to Y_{\mc{S},\mf{q}}^{\psi}$  is an isomorphism.  Similarly, since $\mf{q} \notin \overline{\mc{U}}_{\mc{S},\psi}$, we have that
$$
	\rE^2(X_{\mc{S}^c}^{\omega\psi^{-1}})(1)_{\mf{q}} \to  \rE^2(Y_{\mc{S}^c}^{\omega\psi^{-1}})(1)_{\mf{q}}
$$ 
is an isomorphism.
By Proposition \ref{prop:localseq}, we then have $\mc{E} = \rE^2(X_{\mc{S}^c}^{\omega\psi^{-1}})(1)_{\mf{q}}$, so $\theta_1$ is a unit. 
Moreover, $\rQ(\mc{E})$ is pseudo-null as the cokernel of the map from $\bigwedge^{\ell} \mc{X}$ to its
reflexive hull.

We also set $\mc{I}_i = I_{\mc{T}_i,\mf{q}}^{\psi}$ and $\mc{J}_i=(I_{\mc{T}_i,\mf{q}}^{\psi} )^{**}$.
The canonical maps $\mc{I}_i \to \mc{J}_i$ are isomorphisms of free $\Lambda_{\mf{q}}$-modules by Corollary \ref{cor:localfree} since 
$\mf{q} \notin \overline{\mc{U}}_{\mc{T},\psi}$.  
We may therefore identify the image $(\bigwedge^\ell \mc{I}_i)_{\tf}$ of
$\bigwedge^\ell\mc{I}_i$ in $\bigwedge^\ell \mc{X}$ with $\bigwedge^\ell\mc{I}_i$.
As $\mc{B}_i=0$ in the notation of Lemma \ref{lem:torsion!}, the result follows from the short exact sequence 
(\ref{eq:inertiathree}) in Corollary \ref{cor:pseudo}.
\end{proof}

\begin{corollary}
\label{cor:vanish}  
If $n=2$ and $\mc{V}_{\mc{S},\psi} = \varnothing$, then the following are equivalent.
\begin{enumerate}
\item[(i)] The class $c_2\left(\frac{\Lambda}{(\mc{L}_{\mc{S}_1,\psi}/\theta,\mc{L}_{\mc{S}_2,\psi}/\theta)} \right) $
on the left-hand side of \eqref{eq:c2formulaquartic} is trivial.
\item[(ii)] One of $\mc{L}_{\mc{S}_1,\psi}$ and $\mc{L}_{\mc{S}_2,\psi}$ divides the other, so  
$$(\mc{L}_{\mc{S}_1,\psi}/\theta,\mc{L}_{\mc{S}_2,\psi}/\theta) = \Lambda.$$ 
\item[(iii)]  We have
\begin{eqnarray*}
t_2 \left ( \frac{(\bigwedge^\ell  X_{\mc{S}}^\psi)_{\tf}}{\bigwedge^\ell  I_{\mc{T}_1}^\psi  + \bigwedge^\ell  I_{\mc{T}_2}^\psi }\right ) = 0
&\text{and}&
c_2\left (\frac{\theta}{\theta_0} \cdot \frac{\Lambda}{\Fitt(\rE^2(X_{\mc{S}^c}^{ \omega\psi^{-1} })(1))}\right ) = 0.
\end{eqnarray*}
\end{enumerate}
\end{corollary}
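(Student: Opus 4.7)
The plan is to exploit two facts simultaneously: the hypothesis $\mc{V}_{\mc{S},\psi} = \varnothing$ turns the congruence in \eqref{eq:c2formulaquartic} of Theorem \ref{thm:unconditional} into an honest equality in the free abelian group on codimension two primes, and the definition of $\theta$ as a greatest common divisor forces $a := \mc{L}_{\mc{S}_1,\psi}/\theta$ and $b := \mc{L}_{\mc{S}_2,\psi}/\theta$ to be coprime in the regular local UFD $\Lambda = W\ps{\Gamma}$. Everything else is formal unpacking.

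For the equivalence of (i) and (ii), I would argue purely via commutative algebra on $\Lambda$. Since $a$ and $b$ are coprime in a UFD, no height one prime of $\Lambda$ contains both, so $(a,b)$ has height at least two whenever it is a proper ideal. If one of $\mc{L}_{\mc{S}_1,\psi}$ and $\mc{L}_{\mc{S}_2,\psi}$ divides the other, then one of $a, b$ is a unit and $(a,b) = \Lambda$, whence (i) holds trivially. Conversely, if neither divides the other, then both $a$ and $b$ lie in the maximal ideal of $\Lambda$, so $(a,b)$ is proper. Krull's height theorem gives $\mr{ht}((a,b)) \le 2$, matching the lower bound, so there is a minimal prime $\mc{P}$ over $(a,b)$ of height exactly two. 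Localization at $\mc{P}$ then produces a nonzero module of finite length, contributing a strictly positive term to $c_2(\Lambda/(a,b))$ and contradicting (i).

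For the equivalence of (i) and (iii), the vanishing of $\mc{V}_{\mc{S},\psi}$ reduces the congruence \eqref{eq:c2formulaquartic} of Theorem \ref{thm:unconditional} to an equality of classes. Each term on the right-hand side is a $t_2$ or $c_2$ class of a module whose support has codimension at least two, hence a nonnegative integral combination of height two prime classes. Since a sum of two such nonnegative formal sums vanishes if and only if each summand vanishes, the left-hand side is zero exactly when both summands on the right are zero.

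The main obstacle is slight, since Theorem \ref{thm:unconditional} carries the entire analytic burden. The only step demanding any real care is the purely algebraic one showing that $c_2(\Lambda/(a,b))$ is strictly positive whenever $a$ and $b$ are coprime non-units, which is a standard application of Krull's theorem in a regular local ring. Once this is in place, the three-way equivalence follows immediately from the equality case of \eqref{eq:c2formulaquartic} together with the nonnegativity of higher Chern classes.
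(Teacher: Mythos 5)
Your proof is correct and follows essentially the same route as the paper's, which likewise deduces the equivalence from the equality case of \eqref{eq:c2formulaquartic} (using $\mc{Z}_{\mc{S},\psi}=0$) together with the nonnegativity of $t_2$ and $c_2$ classes of modules supported in codimension at least two. The only difference is that you supply the Krull-height argument in the regular local UFD $\Lambda$ directly for the equivalence of (i) and (ii), whereas the paper cites \cite[Lem.~A.3]{BCGKPST} for that step.
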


\begin{proof} The equivalence of (i) and (ii) follows from \cite[Lem.~A.3]{BCGKPST}.  The fact that (ii) and (iii) are equivalent follows
from the fact that the length of the localization of a module at a prime is a nonnegative integer when this localization has finite length.
\end{proof}

\begin{remark} 
\label{rem:divisorrem}
We suspect that the greatest common divisor $\theta$ in Corollary \ref{cor:vanish} is sometimes nontrivial. To be precise, we believe that this may happen if $\psi$ satisfies the condition $\psi \cdot (\psi \circ j) = \omega$, where $j$ is the involution of $\Gal(E^{\ab}/E)$ given by 
conjugating by any lift of the generator of $\Gal(E/E^+)$. The nontrivial $\theta$ should be $\Theta=\gamma_{\mr{cyc}} - \sqrt{\chi_p(\gamma_{\mr{cyc}})}$, where $\gamma_{\mr{cyc}}$ is a topological generator for $\Gamma^{+}$ and $\chi_p$ is the $p$-power cyclotomic character. (In this remark, we assume the validity of Leopoldt's  conjecture for $E$  so that $\Gamma^{+}$ is topologically cyclic.) Note that $\chi_p(\gamma_{\mr{cyc}})$ is a principal unit and the square root should be  chosen to be a principal unit. There exist  continuous characters $\Psi$ of $\mathcal{G}$ satisfying the conditions
$$\Psi \big|_{\Delta} = \psi, \quad \Psi \cdot (\Psi \circ j) =\chi_p.$$
We have
$\Psi(\gamma_{\mr{cyc}})=\sqrt{\chi_p(\gamma_{\mr{cyc}}) }$  for any such $\Psi$ and hence $\Psi(\Theta)=0$. Conversely, $\Psi(\Theta) =0$ implies that $\Psi \cdot (\Psi \circ j) =\chi_p$. 
Let $\Sigma$ be any  CM type, and let  $\mr{L}_{\Sigma,\psi} \in \Lambda$ be the Katz $p$-adic $L$-function
attached to $\Sigma$ and $\psi$.  (This $L$-function is given up to a certain power of $p$ by integrating the inverse of a character against the Katz measure.)
It follows that $\Theta$ divides $\mr{L}_{\Sigma, \psi}$ if and only if $\Psi\big( \mr{L}_{\Sigma, \psi}\big)=0$  for all $\Psi$ satisfying the above conditions. In fact, if $\Psi_0$ is one such $\Psi$, it is sufficient to have $\Psi\big( \mr{L}_{\Sigma, \psi}\big)=0$  for all $\Psi$ of the form $\Psi=\Psi_0 \cdot \rho$, where $\rho$ is a character of $\Gamma^-$ of finite order. 

It is possible to choose $\Psi_0$ to be the Galois character attached to a Gr\"ossencharacter of type $A_0$ for $E$ whose infinity type lies in the interpolation range for $\mr{L}_{\Sigma, \psi}$. The corresponding complex $L$-function will have a functional equation relating that $L$-function to itself. If the  sign in that functional equation is $-1$, then the central critical value will be forced to vanish. The same thing will be true for $\Psi=\Psi_0 \cdot \rho$ for any finite order character $\rho$ of $\Gamma^-$.  That would mean that $\Psi\big( \mr{L}_{\Sigma, \psi}\big)=0$ for such $\Psi$ if the corresponding sign is $-1$.  Now it turns out that for a given $\Sigma$ and $\psi$, the signs will be constant, either all $+1$ or all $-1$. We suspect that each sign will occur for half of the  CM types, possibly under some extra assumptions on $\psi$ and $E$. Therefore, assuming this is the case, if there are at least four $p$-adic CM-types for $E$, then at least two will have the corresponding signs equal to $-1$. Hence the corresponding $p$-adic $L$-functions will both be divisible by $\Theta$. Thus, examples where $\theta$ is nontrivial may possibly occur when $E$ has at least four primes above $p$.

An illustration of the kind of behavior described above can be found in \cite{GreenInvent}. That paper considers a case where $E$ is an imaginary quadratic field in which $p$ splits. Note however that there are just two primes above $p$ in that case, and it is proved that $\Theta$ is actually not a common divisor of the two $p$-adic $L$-functions. 
\end{remark}

The following result provides a more general version of Theorem \ref{thm:unconditional} that avoids working modulo
$\mc{Z}_{\mc{S},\psi}$ at the expense of a longer statement that includes a new ``error term'' $c_2(C_{\mc{S},\psi})$.

\begin{theorem}
\label{thm:unconditionalBIG}
Let $\theta_0$ be a generator of $t_1(Y_{\mc{S}}^\psi)$, which divides a gcd $\theta$ of 
$\mc{L}_{\mc{S}_1,\psi}, \ldots, \mc{L}_{\mc{S}_n,\psi}$.  
 Let $g \colon \Lambda^n \to \Lambda$ be given by
  $$
 	g( (\alpha_i)_i ) = \sum_{i=1}^n \frac{\mc{L}_{\mc{S}_i,\psi}}{\theta_0} \alpha_i,
$$
 and let $C_{\mc{S},\psi}$ be the cokernel of the map
 $$
 	\ker(\Lambda^n \xrightarrow{g} \Lambda) \to 
	\ker\left(\bigoplus_{i=1}^n \frac{\Lambda}{\Fitt(\rE^2(\mc{K}_{\mc{T}_i}^{\omega\psi^{-1}})(1))} 
	\xrightarrow{g'} \frac{\Lambda}{\Fitt(\rE^2(Y_{\mc{S}^c}^{\omega\psi^{-1}})(1))} \right)
 $$
 induced by the canonical quotient map, where $g'$ is the map induced by $g$.
There is an equality of second Chern classes of pseudo-null modules
 \begin{multline*}
 c_2 \left ( \frac{\Lambda}{(\mc{L}_{\mc{S}_1,\psi}/\theta,\ldots,\mc{L}_{\mc{S}_n,\psi}/\theta)}  \right ) =
 t_2\left ( \frac{(\bigwedge^\ell Y_{\mc{S}}^{\psi})_{\tf}}{  (\bigwedge^\ell I_{\mc{T}_1}^{\psi})_{\tf}+ \cdots + (\bigwedge^\ell I_{\mc{T}_n}^{\psi})_{\tf}} \right ) - c_2(C_{\mc{S},\psi}) \\
 + c_2 \left (\frac{\theta}{\theta_0} \cdot \frac{\Lambda}{\Fitt(\rE^2(Y_{\mc{S}^c}^{\omega\psi^{-1}})(1)) + (\mc{L}_{\mc{S}_1,\psi}/\theta_0)\Lambda + 
 \cdots + (\mc{L}_{\mc{S}_n,\psi}/\theta_0)\Lambda}\right ). 
 \end{multline*}
\end{theorem}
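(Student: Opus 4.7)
The plan is to apply the exterior-power machinery of Section~\ref{sec:exterior} globally with $R = \Lambda$, $\mc{X} = Y_{\mc{S}}^\psi$, $\mc{F} = Y_{\mc{S}}^{\psi **}$, $\mc{I}_i = I_{\mc{T}_i}^\psi$, $\mc{J}_i = I_{\mc{T}_i}^{\psi **}$, and $\lambda\colon \mc{X}\to \mc{F}$ the canonical map into the double dual. Since $\Lambda$ is a regular local ring, hence a UFD, every reflexive $\Lambda$-module of rank one is free; thus $\bigwedge^\ell \mc{F}$ and each $\bigwedge^\ell \mc{J}_i$ are free of rank one, which is the only freeness hypothesis that the proofs of Lemma~\ref{lem:torsion!} and Corollary~\ref{cor:pseudo} actually use. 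Consequently the four-term exact sequence \eqref{eq:inertiatwo} of Corollary~\ref{cor:pseudo} applies directly in this setting.

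Next I would match the data of Lemma~\ref{lem:torsion!} with arithmetic invariants. Proposition~\ref{prop:iwseq} identifies $\rT_1(Y_{\mc{S}}^\psi)$ with $\rE^1(Y_{\mc{S}^c}^{\omega\psi^{-1}})(1)$, whose characteristic generator is $\theta_0$, and identifies the cokernel $\mc{E}$ of $\lambda$ with $\rE^2(Y_{\mc{S}^c}^{\omega\psi^{-1}})(1)$ up to a possible subquotient of $Z_{\mc{S}}^\psi$ supported in codimension at least $r \ge d+1$; in particular $\mc{E}$ is pseudo-null, so the element $\theta_1$ of Lemma~\ref{lem:torsion!} is a unit and $\nu = \theta/\theta_0$. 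Similarly, Proposition~\ref{prop:localseq} combined with the vanishing $\rE^1(\mc{K}_{\mc{T}_i}) = 0$ (from Remark~\ref{EKp} and $r_{\mf{p}} \ge 2$) yields $\mc{B}_i \cong \rE^2(\mc{K}_{\mc{T}_i}^{\omega\psi^{-1}})(1)$, so that $\rQ(\mc{B}_i) = \Lambda/\Fitt(\rE^2(\mc{K}_{\mc{T}_i}^{\omega\psi^{-1}})(1))$. Finally, the short exact sequence of Lemma~\ref{lem:needlater} (whose rightmost term $\mc{K}_{\mc{S}^c,0}^\psi$ is pseudo-null) combined with the isomorphism $X_{\mc{S}}^\psi/I_{\mc{T}_i}^\psi \cong X_{\mc{S}_i}^\psi$ obtained in the proof of Lemma~\ref{lem:ranklemma} gives $c_1(Y_{\mc{S}}^\psi/I_{\mc{T}_i}^\psi) = (\mc{L}_{\mc{S}_i,\psi})$, so that $L_i = \mc{L}_{\mc{S}_i,\psi}$ and $\tilde{L}_i = \mc{L}_{\mc{S}_i,\psi}/\theta_0$; the divisibility $\theta_0 \mid \theta$ asserted in the theorem is then the corresponding statement in Lemma~\ref{lem:torsion!}.

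With these identifications, the sequence \eqref{eq:inertiatwo} of pseudo-null modules, together with the explicit description in Remark~\ref{rem:pseudo} of its leftmost term as the cokernel $\ker(g')/h(\ker(g))$, becomes
\[
0 \to C_{\mc{S},\psi} \to \rT_2\!\left( \frac{(\bigwedge^\ell Y_{\mc{S}}^\psi)_{\tf}}{\sum_{i} (\bigwedge^\ell I_{\mc{T}_i}^\psi)_{\tf}} \right) \to \nu\mc{N} \to \frac{\nu\,\rQ(\mc{E})}{(\tilde{L}_1,\ldots,\tilde{L}_n)\rQ(\mc{E})} \to 0,
\]
in which $\nu\mc{N} \cong \Lambda/(\mc{L}_{\mc{S}_1,\psi}/\theta,\ldots,\mc{L}_{\mc{S}_n,\psi}/\theta)$ and the rightmost term matches the module whose $c_2$ appears as the third summand on the right-hand side of the theorem. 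Taking $c_2$ and invoking its additivity on exact sequences of pseudo-null modules yields the claimed identity.

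The main obstacle will be the global bookkeeping. Specifically, one must verify that the inclusions $I_{\mc{T}_i}^{\psi**} \hookrightarrow Y_{\mc{S}}^{\psi**}$ are injective and realize each $\bigwedge^\ell I_{\mc{T}_i}^{\psi**}$ as the rank-one free submodule $\tilde{L}_i \cdot \bigwedge^\ell Y_{\mc{S}}^{\psi**}$ of the free rank-one module $\bigwedge^\ell Y_{\mc{S}}^{\psi**}$, and one must check prime-by-prime that the globally defined $C_{\mc{S},\psi}$ of the statement agrees with the cokernel produced by Remark~\ref{rem:pseudo}. One must also confirm that in the edge case $\mc{S} = S_f$, the discrepancy between $\mc{E}$ and $\rE^2(Y_{\mc{S}^c}^{\omega\psi^{-1}})(1)$ caused by $Z_{\mc{S}}^\psi$ is supported in codimension at least three, so that the Fitting ideal appearing in the theorem statement is the correct one for the computation of $c_2$.
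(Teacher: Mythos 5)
The central gap is your attempt to run Lemma~\ref{lem:torsion!} and Corollary~\ref{cor:pseudo} globally over $R = \Lambda$. The hypotheses of Lemma~\ref{lem:torsion!} require $\mc{F}$ and each $\mc{J}_i$ to be \emph{free} of rank $\ell$; but $\Lambda \cong W\ps{T_1,\ldots,T_r}$ has Krull dimension $r+1 \ge d+2 \ge 3$, and over a regular local ring of dimension $\ge 3$ the reflexive modules $Y_{\mc{S}}^{\psi**}$ and $I_{\mc{T}_i}^{\psi**}$ of rank $\ell \ge 2$ need not be free. Your claim that only freeness of the \emph{top exterior powers} is used in the proofs does not hold up: the proof of Lemma~\ref{lem:torsion!} explicitly uses freeness of $\mc{J}_i$ and $\mc{F}$ to identify the image of $\bigwedge^\ell \mc{J}_i$ in $\bigwedge^\ell \mc{F}$ with the \emph{principal} submodule $\tilde{L}_i \cdot \bigwedge^\ell \mc{F}$, and to justify the exact sequence of diagram~\eqref{eq:diagram1newaaa}. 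Moreover, $\bigwedge^\ell$ of a reflexive module over a ring of dimension $\ge 3$ need not itself be reflexive or torsion-free, so the assertion ``$\bigwedge^\ell \mc{F}$ is free of rank one'' is unjustified. This is not mere bookkeeping: it is the place where the argument genuinely fails if carried out over $\Lambda$.

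The paper instead \emph{localizes} at an arbitrary codimension-two prime $\mf{q}$ of $\Lambda$, where $\Lambda_{\mf{q}}$ is a regular local ring of dimension $2$, over which every finitely generated reflexive module is automatically free. Thus $Y_{\mc{S},\mf{q}}^{\psi**}$ and $(I_{\mc{T}_i,\mf{q}}^{\psi})^{**}$ are free, Lemma~\ref{lem:torsion!} and Corollary~\ref{cor:pseudo} apply with the same identifications you propose, and since $c_2$ and $t_2$ are by definition determined by localizations at codimension-two primes, the theorem follows. Your remark about the $Z_{\mc{S}}^\psi$ discrepancy is also off: for $r = 2$ the module $\zp$ is supported in codimension exactly $2$, not $\ge 3$; the paper dispatches this case by invoking that the map $\rE^2(Y_{\mc{S}^c})(1) \to Z_{\mc{S}}$ vanishes for $r = 2$ (citing \cite[Prop.~4.1.17]{BCGKPST}), while for $r \ge 3$ the module $\zp$ localizes to zero at codimension-two primes. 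Your identification of the ingredients (Propositions~\ref{prop:iwseq} and~\ref{prop:localseq}, Lemmas~\ref{lem:needlater} and~\ref{lem:ranklemma}, and Remark~\ref{rem:pseudo}) is otherwise correct; once you replace the global argument with the localization step, the proof matches the paper's.
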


\begin{proof}
Let $\mf{q}$ be a codimension $2$ prime of $\Lambda$.  Then the localization $Y_{\mc{S},\mf{q}}^{**}$ is free 
as a reflexive module over the local ring $\Lambda_{\mf{q}}$ of Krull dimension $2$.   Note that $(\zp)_{\mf{q}} = 0$ if $r \ge 3$, and the map
$\rE^2(Y_{\mc{S}^c})(1) \to Z_{\Sigma}$ in Proposition \ref{prop:localseq} is zero if $r = 2$ by 
\cite[Prop.~4.1.17]{BCGKPST}.  
Lemma \ref{lem:ranklemma} gives the injectivity of $I_{\mc{T}_i}^{\psi} \to Y_{\mc{S}}^{\psi}$, so
we are by Proposition \ref{prop:localseq} in the situation of Lemma \ref{lem:torsion!} with 
\begin{eqnarray*}
    &R=\Lambda_{\mf{q}}, \quad \mc{X}=Y_{\mc{S},\mf{q}}^\psi, \quad \mc{F}=(Y_{\mc{S},\mf{q}}^\psi)^{**}, \quad
    \mc{E}=\rE^2(Y_{\mc{S}^c}^{\omega\psi^{-1}})(1)_{\mf{q}},&\\
   & \mc{I}_i=I_{\mc{T}_i,\mf{q}}^{\psi}, \quad \mc{J}_i=(I_{\mc{T}_i,\mf{q}}^{\psi})^{**}, \quad \mr{and} \quad
    \mc{B}_i=\rE^2(\mc{K}_{\mc{T}_i}^{\omega \psi^{-1}})(1)_{\mf{q}}.&
\end{eqnarray*}
Theorem \ref{thm:unconditionalBIG} then follows from Corollary \ref{cor:pseudo}, with Remark \ref{rem:pseudo} providing
the term $c_2(C_{\mc{S},\psi})$.
\end{proof}

We have $\ell = 1$ in Theorem \ref{thm:unconditional} if and only if $n = 2$ and the CM types $\mc{S}_1$ and $\mc{S}_2$ differ by only one prime, which is of degree $1$ (i.e., $r_{\mf{p}} = 2$).  In this case, we obtain the following more explicit results.  In particular, Proposition \ref{prop:pn} and Theorem \ref{thm:rank1}  imply Theorem  C.

\begin{proposition}
\label{prop:pn}
	Suppose that $\ell = 1$ so that $n = 2$.  The following conditions are equivalent:
	\begin{itemize}
	\item[(a)] $X_{\mc{S}_1 \cap \mc{S}_2}^\psi$ and $X_{\overline{\mc{S}}_1 \cap \overline{\mc{S}}_2}^{\omega \psi^{-1}}$ are 
	both pseudo-null,
	\item[(b)] $\mc{L}_{\mc{S}_1,\psi}$ and $\mc{L}_{\mc{S}_2,\psi}$ are relatively prime.   
	\end{itemize}
\end{proposition}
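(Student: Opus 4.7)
The plan is to analyze the situation locally at height-one primes of $\Lambda$. Since $\ell=1$, the CM types $\mc{S}_1$ and $\mc{S}_2$ differ in a single conjugate pair of degree-one primes; after relabeling, we may write $\mc{T}_1=\{\overline{\mf{p}}\}$ and $\mc{T}_2=\{\mf{p}\}$, where $\{\mf{p},\overline{\mf{p}}\}=\mc{S}\setminus(\mc{S}_1\cap\mc{S}_2)$. Because the kernel of the surjection $X_{\mc{S}}^\psi\twoheadrightarrow X_{\mc{S}_i}^\psi$ is the image of inertia at primes in $\mc{T}_i$, there is a canonical identification
\begin{equation*}
X_{\mc{S}}^\psi / (I_{\mc{T}_1}^\psi + I_{\mc{T}_2}^\psi) \cong X_{\mc{S}_1 \cap \mc{S}_2}^\psi,
\end{equation*}
exhibiting $X_{\mc{S}_1\cap\mc{S}_2}^\psi$ as a common quotient of $X_{\mc{S}_1}^\psi$ and $X_{\mc{S}_2}^\psi$.

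For the direction (b)$\Rightarrow$(a), assume $\mc{L}_{\mc{S}_1,\psi}$ and $\mc{L}_{\mc{S}_2,\psi}$ are coprime. Since $\Lambda$ is a UFD, at each height-one prime $\mc{P}$ at most one of these elements lies in $\mc{P}$, so at least one of $X_{\mc{S}_i,\mc{P}}^\psi$ vanishes; being a common quotient, $X_{\mc{S}_1\cap\mc{S}_2,\mc{P}}^\psi=0$ for every height-one $\mc{P}$, and so this module is pseudo-null. For the conjugate module, I apply Corollary \ref{cor:extvanish} with $\Sigma=\mc{S}_i$: since $\overline{\mc{S}}_i$ is a CM type, $Y_{\overline{\mc{S}}_i}$ is torsion, and the corollary gives $\rE^1(Y_{\mc{S}_i})(1)\cong Y_{\overline{\mc{S}}_i}$ as $\Omega$-modules. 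Taking $\omega\psi^{-1}$-eigenspaces absorbs the Tate twist (using $\chi_p|_\Delta=\omega$) and yields an isomorphism of $\Lambda$-modules $\rE^1_\Lambda(Y_{\mc{S}_i}^\psi)\cong Y_{\overline{\mc{S}}_i}^{\omega\psi^{-1}}$. A standard computation shows that for a torsion $\Lambda$-module $N$ with no pseudo-null submodule, the characteristic ideal of $\rE^1_\Lambda(N)$ is the image of that of $N$ under the involution $\iota$ of $\Lambda$; hence $(\mc{L}_{\overline{\mc{S}}_i,\omega\psi^{-1}}) = \iota((\mc{L}_{\mc{S}_i,\psi}))$. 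Since $\iota$ is a ring automorphism of $\Lambda$, it preserves coprimality, and repeating the first part of the argument with $\psi$ replaced by $\omega\psi^{-1}$ and $\mc{S}_i$ by $\overline{\mc{S}}_i$ gives pseudo-nullity of $X_{\overline{\mc{S}}_1\cap\overline{\mc{S}}_2}^{\omega\psi^{-1}}$.

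Conversely, suppose (a). The primes excluded in Theorem \ref{thm:first_big_thm} lie in the support of $(X_{\mc{S}^c}^{\omega\psi^{-1}})^\iota(1)\oplus(\mc{K}_{\mc{S},0}^{\omega\psi^{-1}})^\iota(1)\oplus\mc{K}_{\overline{\mc{S}},0}^\psi$. The first summand equals $(X_{\overline{\mc{S}}_1\cap\overline{\mc{S}}_2}^{\omega\psi^{-1}})^\iota(1)$, pseudo-null by (a); the two $\mc{K}$-summands are supported in codimension $\geq \min_{\mf{q}} r_{\mf{q}}\geq 2$ by Remark \ref{EKp} and Lemma \ref{lem:sweepup}. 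Hence no height-one prime $\mc{P}$ is excluded, and invoking the theorem with $\ell=1$ (so that $\bigwedge^1$ is the identity and the relevant modules are already torsion-free by the freeness statement of Theorem \ref{thm:first_big_thm}) yields
\begin{equation*}
X_{\mc{S}_1\cap\mc{S}_2,\mc{P}}^\psi \cong \Lambda_\mc{P} / (\mc{L}_{\mc{S}_1,\psi},\mc{L}_{\mc{S}_2,\psi}).
\end{equation*}
The left side vanishes by (a), so $(\mc{L}_{\mc{S}_1,\psi},\mc{L}_{\mc{S}_2,\psi})=\Lambda_\mc{P}$ for every height-one $\mc{P}$, which is coprimality. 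The main obstacle is the duality step used to transfer coprimality between $\psi$ and $\omega\psi^{-1}$: one must carefully track how the Tate twist interacts with the $\Delta$-eigenspace decomposition when extracting from Corollary \ref{cor:extvanish} a $\Lambda$-module statement, and verify that characteristic ideals of torsion $\Lambda$-modules transform under $\rE^1_\Lambda$ by the ring involution $\iota$ of $\Lambda$.
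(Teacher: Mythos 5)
Your proof is correct in substance but takes a genuinely different route from the paper, and it contains one imprecision worth flagging.

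\textbf{Comparison with the paper.} The paper's proof is entirely self-contained within a single diagram chase: it sets up the map of exact sequences \eqref{eq:simplediag} coming from Proposition \ref{prop:localseq}, then observes that $\rE^1(Y_{\overline{\Sigma}}^{\omega\psi^{-1}})(1)$ is the torsion submodule of $Y_{\mc{S}}^\psi$, which embeds into the common quotient of characteristic ideal $(\mc{L}_{\mc{S}_i,\psi})$ for each $i$; coprimality then forces this torsion submodule to be pseudo-null. For the converse the paper again uses \eqref{eq:simplediag}, this time with $\rE^1(Y_{\overline{\Sigma}}^{\omega\psi^{-1}})(1)=0$, and a snake-lemma argument to produce a pseudo-isomorphism $X_{\Sigma}^\psi \to \Lambda/(\mc{L}_{\mc{S}_1,\psi},\mc{L}_{\mc{S}_2,\psi})$. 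You instead: for (b)$\Rightarrow$(a), handle $X_{\mc{S}_1\cap\mc{S}_2}^\psi$ directly via the common-quotient observation, then invoke Corollary \ref{cor:extvanish} to transfer the coprimality statement to the conjugate data $(\overline{\mc{S}}_1,\overline{\mc{S}}_2,\omega\psi^{-1})$ and repeat; for (a)$\Rightarrow$(b), you appeal to the already-established Theorem \ref{thm:first_big_thm} localized at height-one primes, which immediately identifies $X_{\mc{S}_1\cap\mc{S}_2,\mc{P}}^\psi$ with $\Lambda_{\mc{P}}/(\mc{L}_{\mc{S}_1,\psi},\mc{L}_{\mc{S}_2,\psi})$. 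This is a clean alternative: it modularizes the argument (delegating the hard diagram work to Theorem \ref{thm:first_big_thm} and Corollary \ref{cor:extvanish}) rather than repeating a bespoke diagram chase; the trade-off is that it relies on more machinery and, for the duality step, requires a careful bookkeeping of twists. There is no circularity, since Theorem \ref{thm:first_big_thm} and Corollary \ref{cor:extvanish} are proved before Proposition \ref{prop:pn} and do not use it.

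\textbf{The imprecision.} Your claim that ``taking $\omega\psi^{-1}$-eigenspaces absorbs the Tate twist'' is not correct: only the $\Delta$-component of $\chi_p$ (namely $\omega$) is absorbed into the eigenspace, while the $\Gamma$-component survives. The correct $\Lambda$-module statement obtained from Corollary \ref{cor:extvanish} is
\begin{equation*}
\rE^1(Y_{\mc{S}_i}^\psi)(1)\;\cong\;Y_{\overline{\mc{S}}_i}^{\omega\psi^{-1}},
\end{equation*}
with the $(1)$ now denoting the $\Gamma$-twist. Consequently, $(\mc{L}_{\overline{\mc{S}}_i,\omega\psi^{-1}})$ is not literally $\iota\bigl((\mc{L}_{\mc{S}_i,\psi})\bigr)$ but rather the image of $(\mc{L}_{\mc{S}_i,\psi})$ under the composite of $\iota$ with the ring automorphism of $\Lambda$ induced by the cyclotomic twist on $\Gamma$. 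This does not affect your conclusion, since that composite is still a ring automorphism and hence preserves coprimality, but the statement as written is inaccurate and should be repaired before the step ``since $\iota$ is a ring automorphism of $\Lambda$, it preserves coprimality.''

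Beyond that, your argument is sound: the identification $X_{\mc{S}}^\psi/(I_{\mc{T}_1}^\psi+I_{\mc{T}_2}^\psi)\cong X_{\mc{S}_1\cap\mc{S}_2}^\psi$ is correct, the codimension-one support of $X_{\mc{S}_i}^\psi$ is exactly $V(\mc{L}_{\mc{S}_i,\psi})$, and the reduction of the exceptional primes of Theorem \ref{thm:first_big_thm} to $(X_{\mc{S}^c}^{\omega\psi^{-1}})^\iota(1)$ at height one (the $\mathcal{K}$-terms being supported in codimension $\ge 2$ since $r_{\mathfrak{p}}\ge 2$) is valid.
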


\begin{proof}
	Let 
	\begin{eqnarray*}
	\Sigma = \mc{S}_1 \cap \mc{S}_2 &\mr{and}& \overline{\Sigma} = \mc{S}^c = \overline{\mc{S}}_1 \cap \overline{\mc{S}}_2.
	\end{eqnarray*}
	Set $L_i = \mc{L}_{\mc{S}_i,\psi}$ for brevity.  As we have remarked,
	$X_{\Sigma} \to Y_{\Sigma}$ is injective with pseudo-null cokernel, so $X_{\Sigma}^{\psi}$
	is pseudo-null if and only if $Y_{\Sigma}^{\psi}$ is. Similarly, $X_{\overline{\Sigma}}^{\omega\psi^{-1}}$ is pseudo-null
	if and only if $Y_{\overline{\Sigma}}^{\omega\psi^{-1}}$ is.  
	
	Suppose that (b) holds.  In this case, since both $L_1$ and $L_2$ annihilate $X_{\Sigma}^{\psi}$ by definition and 
	are relatively prime by assumption, $X_{\Sigma}^{\psi}$ is pseudo-null.  
	We now conclude from 
	Proposition \ref{prop:localseq}  and \cite[Prop.~4.1.17]{BCGKPST} that there is a map of exact sequences
	\begin{equation} \label{eq:simplediag}
		\SelectTips{cm}{} \xymatrix{
		& 0 \ar[r] & I_{\mc{T}_i}^{\psi} \ar[r] \ar[d] & (I_{\mc{T}_i}^{\psi})^{**} \ar[d] \ar[r] & \rE^2(\mc{K}_{\mc{T}_i}^{\omega \psi^{-1}})(1) \ar[d] \ar[r] & 0\\
		0 \ar[r] & \rE^1(Y_{\overline{\Sigma}}^{\omega\psi^{-1}})(1) \ar[r]
		& Y_{\mc{S}}^{\psi} \ar[r] & (Y_{\mc{S}}^{\psi})^{**} \ar[r] & \rE^2(Y_{\overline{\Sigma}}^{\omega\psi^{-1}})(1)}
	\end{equation}
	for $i \in \{1,2\}$.
	The leftmost vertical map in \eqref{eq:simplediag} for a given $i$ has torsion cokernel with first Chern class 
	$c_1(X_{\mc{S}_i}^{\psi}) = (L_i)$.  This forces the map $(I_{\mc{T}_i}^{\psi})^{**} \to (Y_{\mc{S}}^{\psi})^{**}$ between
	free $\Lambda$-modules of rank one to be injective.
	From the diagram, we then see that
	the first Chern class of the torsion $\Lambda$-module
	$\rE^1(Y_{\overline{\Sigma}}^{\omega\psi^{-1}})(1)$ divides $(L_i)$.  Since $L_1$ and $L_2$
	are relatively prime, this forces $\rE^1(Y_{\overline{\Sigma}}^{\omega\psi^{-1}})$ to be pseudo-null, which 
	can only occur if the torsion module $Y_{\overline{\Sigma}}^{\omega\psi^{-1}}$ is pseudo-null.  Thus,
	$X_{\overline{\Sigma}}^{\omega\psi^{-1}}$ is pseudo-null as well.
	
	Now suppose that (a) holds.  
	We again use the diagram \eqref{eq:simplediag} but now have that the term $\rE^1(Y_{\overline{\Sigma}}^{\omega\psi^{-1}})(1)$ 
	is zero since $X_{\overline{\Sigma}}^{\omega\psi^{-1}}$ is pseudo-null.  
	Since $(I_{\mc{T}_i}^{\psi})^{**} \to (Y_{\mc{S}}^{\psi})^{**}$ is a map between free $\Lambda$-modules
	of rank $1$, we see that upon appropriate choices of $\Lambda$-bases it is given by multiplication by 
	$L_i$.  Applying the direct sum of the vertical maps in \eqref{eq:simplediag} for $i \in \{1,2\}$, we get a composite 
	map 
	$$
		X_{\Sigma}^{\psi} \to Y_{\mc{S}}^{\psi}/I_{\mc{T}_1 \cup \mc{T}_2}^{\psi} \to \Lambda/(L_1,L_2)
	$$ 
	on cokernels which is a pseudo-isomorphism by the snake lemma.  Since $X_{\Sigma}^{\psi}$
	is pseudo-null, so is $\Lambda/(L_1,L_2)$, and therefore $L_1$ and $L_2$ are relatively prime.
\end{proof}

\begin{remark} \label{rem:c2E2}
	We claim that $c_2(\rE^2(M)) = c_2(M^{\iota})$ for any finitely generated pseudo-null $\La$-module $M$.  Since 
	$\rE^2(M)^{\iota} = \Ext^2_{\La}(M,\La)$, we need only verify that 
	$$
		c_2(\Ext^2_{\La_P}(M_P,\La_P))=c_2(M_P)
	$$
	upon localization at a height $2$ prime $P$ of $\La$.  Since $\La_P$ is 
	regular of dimension $2$, the localization $M_P$ has a finite filtration with graded pieces isomorphic to $\La_P/P\La_P$ 
	(cf. \cite[Lem.~A.2]{BCGKPST}). 
	For any short exact sequence  $0\to N\to M_P\to \La_P/P\La_P \to 0$ of $\La_P$-modules, 
	we have $\Ext^1_{\La_P}(N,\La_P)=0$ since $N$ is pseudo-null, and $\Ext^3_{\La_P}(\La_P/P\La_P,\La_P)=0$ since $\Lambda_P$ 
	has dimension 2.  
	Since $\Ext^2_{\La_P}(\La_P/P\La_P,\La_P)=\La_P/P\La_P$ and second Chern classes are
	additive with respect to short exact sequences of pseudo-null modules, our claim now follows by induction.
\end{remark}

\begin{theorem}
\label{thm:rank1} 
	Let $\ell = 1$, and suppose that $X_{\mc{S}_1 \cap \mc{S}_2}^\psi$ and 
	$X_{\overline{\mc{S}}_1 \cap \overline{\mc{S}}_2}^{\omega \psi^{-1}}$ are both pseudo-null.  
	Then there is an equality of second Chern classes of pseudo-null modules
	 \begin{equation}
	\label{eq:c2formulaquarticone}
	c_2\left(\frac{\Lambda}{(\mc{L}_{\mc{S}_1,\psi},\mc{L}_{\mc{S}_2,\psi})} \right)  = 
	c_2(X_{\mc{S}_1 \cap \mc{S}_2}^\psi) + c_2( (X_{\overline{\mc{S}}_1 \cap \overline{\mc{S}}_2}^{ \omega\psi^{-1} })^\iota(1) ) 
	+ c_2(\mc{K}_{\overline{\mc{S}}_1 \cap \overline{\mc{S}}_2}^{\psi}) 
	+ c_2((\mc{K}_{\mc{S}_1 \cap \mc{S}_2}^{\omega\psi^{-1}})^{\iota}(1)).  
	\end{equation}
\end{theorem}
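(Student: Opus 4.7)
\medskip

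The plan is to work locally at each codimension-two prime of $\Lambda$, using additivity of $c_2$ on short exact sequences of pseudo-null modules. By Proposition \ref{prop:pn}, the hypothesis implies that $L_1:=\mc{L}_{\mc{S}_1,\psi}$ and $L_2:=\mc{L}_{\mc{S}_2,\psi}$ are coprime, so $\Lambda/(L_1,L_2)$ is pseudo-null. Set $\Sigma=\mc{S}_1\cap\mc{S}_2$ and $\overline{\Sigma}=\mc{S}^c$. The pseudo-nullity of $X_{\overline{\Sigma}}^{\omega\psi^{-1}}$, combined with Lemma \ref{lem:needlater} and pseudo-nullity of $\mc{K}_{\Sigma,0}^{\omega\psi^{-1}}$, forces $Y_{\overline{\Sigma}}^{\omega\psi^{-1}}$ to be pseudo-null. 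Then Proposition \ref{prop:iwseq} shows $Y_\mc{S}^\psi$ is torsion-free of rank one, so its reflexive hull $Y_\mc{S}^{\psi,**}$ is free of rank one over the UFD $\Lambda$; the same holds for $I_{\mc{T}_i}^{\psi,**}$. A characteristic-ideal computation using the short exact sequence $0\to I_{\mc{T}_i}^\psi\to X_\mc{S}^\psi\to X_{\mc{S}_i}^\psi\to 0$ from Lemma \ref{lem:ranklemma} identifies the inclusion $I_{\mc{T}_i}^{\psi,**}\hookrightarrow Y_\mc{S}^{\psi,**}$ with multiplication by $L_i$ up to unit, giving $Y_\mc{S}^{\psi,**}/(I_{\mc{T}_1}^{\psi,**}+I_{\mc{T}_2}^{\psi,**})\cong \Lambda/(L_1,L_2)$.

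Next I would apply the snake lemma to compare the short exact sequence $0\to I_{\mc{T}_1}^\psi+I_{\mc{T}_2}^\psi\to Y_\mc{S}^\psi\to Y_\mc{S}^\psi/(I_{\mc{T}_1}^\psi+I_{\mc{T}_2}^\psi)\to 0$ with its reflexive-hull analogue. Taking $c_2$ of the resulting four-term exact sequence of pseudo-null modules yields
\begin{equation*}
c_2(\Lambda/(L_1,L_2))=c_2\bigl(Y_\mc{S}^\psi/(I_{\mc{T}_1}^\psi+I_{\mc{T}_2}^\psi)\bigr)+c_2\bigl(Y_\mc{S}^{\psi,**}/Y_\mc{S}^\psi\bigr)-c_2(\tilde{C}),
\end{equation*}
where $\tilde{C}=(I_{\mc{T}_1}^{\psi,**}+I_{\mc{T}_2}^{\psi,**})/(I_{\mc{T}_1}^\psi+I_{\mc{T}_2}^\psi)$. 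By Lemma \ref{lem:needlater} and $X_\mc{S}^\psi/(I_{\mc{T}_1}^\psi+I_{\mc{T}_2}^\psi)=X_\Sigma^\psi$, the first term equals $c_2(X_\Sigma^\psi)+c_2(\mc{K}_{\overline{\Sigma},0}^\psi)$. The second, using Proposition \ref{prop:iwseq}'s identification $Y_\mc{S}^{\psi,**}/Y_\mc{S}^\psi\cong\rE^2(Y_{\overline{\Sigma}}^{\omega\psi^{-1}})(1)$, breaks via the $\rE^2$-exact sequence (short exact at codim 2 since $\rE^1$ vanishes on pseudo-null modules there) into two pieces; a twisted version of Remark \ref{rem:c2E2}, namely $c_2(\rE^2(M)(1))=c_2(M^\iota(1))$ for pseudo-null $M$ (deduced by combining $\rE^2(M(-1))=\rE^2(M)(1)$ with the identification $(M(-1))^\iota\cong M^\iota(1)$), converts these into $c_2((X_{\overline{\Sigma}}^{\omega\psi^{-1}})^\iota(1))+c_2((\mc{K}_{\Sigma,0}^{\omega\psi^{-1}})^\iota(1))$.

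To finish, I would analyze $\tilde{C}$ via a second snake-lemma diagram with rows $0\to I_{\mc{T}_1}^\psi\cap I_{\mc{T}_2}^\psi\to I_{\mc{T}_1}^\psi\oplus I_{\mc{T}_2}^\psi\to I_{\mc{T}_1}^\psi+I_{\mc{T}_2}^\psi\to 0$ and its reflexive-hull analogue; the identifications $I_{\mc{T}_i}^{\psi,**}/I_{\mc{T}_i}^\psi\cong(\mc{K}_{\mc{T}_i}^{\omega\psi^{-1}})^\iota(1)$ (Proposition \ref{prop:localseq} plus Remark \ref{EKp}) and $I_{\mc{T}_1}^{\psi,**}\cap I_{\mc{T}_2}^{\psi,**}=L_1L_2\Lambda$ (from coprimality inside $Y_\mc{S}^{\psi,**}=\Lambda$) then give an exact sequence $0\to D\to (\mc{K}_{\mc{T}_1}^{\omega\psi^{-1}})^\iota(1)\oplus(\mc{K}_{\mc{T}_2}^{\omega\psi^{-1}})^\iota(1)\to\tilde{C}\to 0$, where $D=L_1L_2\Lambda/(I_{\mc{T}_1}^\psi\cap I_{\mc{T}_2}^\psi)$. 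Matching terms with the natural sequences $0\to\mc{K}_{\overline{\Sigma},0}\to\mc{K}_{\overline{\Sigma}}\to\zp\to 0$ and its conjugate-twisted analogue on the $(\omega\psi^{-1})$-side reduces the theorem to showing that $c_2(D)=c_2((\mc{K}_\mc{T}^{\omega\psi^{-1}})^\iota(1))+c_2(\zp^\psi)+c_2((\zp^{\omega\psi^{-1}})^\iota(1))$. The main obstacle is this last identification: pinning down the intersection $I_{\mc{T}_1}^\psi\cap I_{\mc{T}_2}^\psi$ of inertia at the two conjugate degree-one primes $\mf{p},\overline{\mf{p}}$ inside $Y_\mc{S}^\psi$, showing that its codimension-two deviation from $L_1L_2\Lambda$ absorbs exactly the $\mc{K}_\mc{T}$-contribution from the $I_{\mc{T}_i}^{\psi,**}/I_{\mc{T}_i}^\psi$ defects together with the $\zp$-correction terms converting $\mc{K}_{\cdot,0}$ into $\mc{K}_\cdot$. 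This likely requires invoking the explicit structure of $D_\mf{p}$, $D_\mf{p}^{**}$, and $D_\mf{p}^*$ from Lemma \ref{lem:localexts}(ii) and exploiting local Tate duality at $\mf{p}$ and $\overline{\mf{p}}$ symmetrically.
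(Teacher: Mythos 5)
Your strategy diverges from the paper's in a way that creates a genuine gap. You take the sum $I_{\mc{T}_1}^\psi + I_{\mc{T}_2}^\psi$ \emph{inside} $Y_\mc{S}^\psi$ and then try to compensate by analyzing the intersection $I_{\mc{T}_1}^\psi \cap I_{\mc{T}_2}^\psi$; the paper instead uses the external direct sum $I_{\mc{T}}^\psi = I_{\mc{T}_1}^\psi \oplus I_{\mc{T}_2}^\psi$ (exploiting that $\mc{T} = \mc{T}_1 \sqcup \mc{T}_2$ is a disjoint union) as the top row of the Proposition~\ref{prop:localseq} diagram, so no intersection ever appears. The paper's $f_1$, $f_2$, $f_3$ run vertically between its two rows, and the whole computation reduces to three cokernel calculations plus the observation that $f_3$ is injective up to codimension $> 2$.

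There are two concrete problems in your argument. First, the cokernel of $Y_{\overline{\Sigma}}^{\omega\psi^{-1}} \hookrightarrow \bullet$ in Lemma~\ref{lem:needlater} is $\mc{K}_{\mc{S},0}^{\omega\psi^{-1}}$, not $\mc{K}_{\Sigma,0}^{\omega\psi^{-1}}$: in that lemma the cokernel is indexed by the \emph{complement} of the set being localized at, and $(\overline{\Sigma})^c = \mc{S}$, which contains $\mc{T}$. Because $\mc{K}_{\mc{S}}^{\omega\psi^{-1}} = \mc{K}_{\Sigma}^{\omega\psi^{-1}} \oplus \mc{K}_{\mc{T}}^{\omega\psi^{-1}}$, your breakdown of $c_2(Y_\mc{S}^{\psi,**}/Y_\mc{S}^\psi)$ silently drops the $\mc{K}_{\mc{T}}$-contribution. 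Correcting this and retracing the algebra, your two snake lemmas reduce the theorem not to the identity you state but to $c_2(D) = 0$, i.e., to showing that $I_{\mc{T}_1}^\psi \cap I_{\mc{T}_2}^\psi = L_1 L_2 \Lambda$ up to codimension $\ge 3$ inside $Y_\mc{S}^{\psi,**} \cong \Lambda$.

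Second, that residual claim is precisely where your proposal stops. You acknowledge it as "the main obstacle," but it is far from routine: $I_{\mc{T}_i}^\psi$ sits inside $L_i\Lambda$ with cokernel $\rE^2(\mc{K}_{\mc{T}_i}^{\omega\psi^{-1}})(1)$ supported on codimension-two primes associated with trivial zeros, and $\mc{T}_1, \mc{T}_2$ are complex-conjugate degree-one primes, so $\Gamma_{\mf{p}} \otimes \qp = \Gamma_{\overline{\mf{p}}} \otimes \qp$ by Remark~\ref{rem:Us} and the two defect loci are not a priori disjoint. Nailing down the intersection there requires more than Lemma~\ref{lem:localexts} and local Tate duality as stated. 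The paper's direct-sum device is precisely what makes this problem disappear: in diagram~\eqref{eq:diagram!} the relevant inertia module has rank two mapping into a rank-one target, and all the bookkeeping lands in cokernels that are controlled by Proposition~\ref{prop:localseq} and sequence~\eqref{eq:Ext1} without ever needing the intersection.
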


\begin{proof} If $E$ is imaginary quadratic, this is \cite[Thm.~5.2.5]{BCGKPST}, so we assume in what follows that $[E:\mathbb{Q}] > 2$.  
	As in the proof of Proposition \ref{prop:pn}, we let $\Sigma = \mc{S}_1 \cap \mc{S}_2$ and
	$\overline{\Sigma} = \mc{S}^c = \overline{\mc{S}}_1 \cap \overline{\mc{S}}_2$ and set $L_i = \mc{L}_{\mc{S}_i,\psi}$ for $i \in \{1,2\}$.
	Consider the set $\mc{T} = \mc{T}_1 \cup \mc{T}_2$ of cardinality $2$.
	The maps of \eqref{eq:simplediag} for $i \in \{1,2\}$ yield a diagram of exact sequences
	\begin{equation}
	\label{eq:diagram!}
	\SelectTips{cm}{} \xymatrix{
		0 \ar[r] & I_{\mc{T}}^{\psi} \ar[r] \ar[d]_{f_1} & (I_{\mc{T}}^{\psi})^{**} 
		\ar[d]_{f_2} \ar[r] & \rE^2(\mc{K}_{\mc{T}}^{\omega\psi^{-1}})(1) \ar[d]_{f_3} \ar[r] & 0\\
		0 \ar[r] & Y_{\mc{S}}^{\psi} \ar[r] & (Y_{\mc{S}}^{\psi})^{**} \ar[r] & \rE^2(Y_{\overline{\Sigma}}^{\omega\psi^{-1}})(1) \ar[r] & 0.
	}
	\end{equation}
	(Note that $Z_{\mc{S}} = 0$ since $\mc{S} \neq S_f$, so we have the right exactness in the lower row.)
	We show that $f_3$ is an injection up to modules supported in codimension greater than $2$, so 
	$$
		c_2(\coker(f_2)) = c_2(\coker(f_1)) + c_2(\coker(f_3)).
	$$
	
	From the exact sequence of Lemma \ref{lem:needlater} and the pseudo-nullity of 
	$X_{\overline{\Sigma}}^{\omega\psi^{-1}}$, we have an exact sequence of
	Ext-groups
	\begin{equation}
	\label{eq:Ext1}
		0  \to \rE^2(\mc{K}_{\mc{S},0}^{\omega\psi^{-1}}) \to \rE^2(Y_{\overline{\Sigma}}^{\omega\psi^{-1}})
		\to \rE^2(X_{\overline{\Sigma}}^{\omega\psi^{-1}}) \to \rE^3(\mc{K}_{\mc{S},0}^{\omega\psi^{-1}}).
	\end{equation}
	Since $r \ge 3$, the map  $\rE^2(\mc{K}_{\mc{S}}^{\omega\psi^{-1}}) \to \rE^2(\mc{K}_{\mc{S},0}^{\omega\psi^{-1}})$ is an injection, and 
	since $\overline{\Sigma}^c = \mc{S} = \mc{T} \cup \Sigma$, the group $\rE^2(\mc{K}_{\mc{S}}^{\omega\psi^{-1}})$ contains
	$\rE^2(\mc{K}_{\mc{T}}^{\omega\psi^{-1}})$ as a direct summand.  
	It follows that $f_3$ is an injection.  Since
	$\rE^3(\mc{K}_{\mc{S},0}^{\omega\psi^{-1}})$ is supported  in codimension greater than $2$, 
	using (\ref{eq:diagram!}) and  (\ref{eq:Ext1}), we obtain
	\begin{align*}
		c_2(\coker(f_3)) &= c_2(\rE^2(Y_{\overline{\Sigma}}^{\omega\psi^{-1}})(1)) - c_2(\rE^2(\mc{K}_{\mc{T}}^{\omega\psi^{-1}})(1))\\
		&= c_2(\rE^2(X_{\overline{\Sigma}}^{\omega\psi^{-1}})(1)) +  c_2(\rE^2(\mc{K}_{\mc{S}}^{\omega\psi^{-1}})(1)) - c_2(\rE^2(\mc{K}_{\mc{T}}^{\omega\psi^{-1}})(1))\\
		&=c_2(\rE^2(X_{\overline{\Sigma}}^{\omega\psi^{-1}})(1))+c_2(\rE^2(\mc{K}_{\Sigma}^{\omega\psi^{-1}})(1))\\
		&=c_2((X_{\overline{\Sigma}}^{\omega\psi^{-1}})^{\iota}(1))+ c_2((\mc{K}_{\Sigma}^{\omega\psi^{-1}})^{\iota}(1)),
	\end{align*}
	the last equality following from Remark 
	\ref{rem:c2E2}.
	As in the proof of Proposition \ref{prop:pn}, the cokernel of $f_2$ is pseudo-null with second Chern class 
	$$
		c_2(\coker(f_2)) = c_2(\Lambda/(L_1,L_2)).
	$$  
	The cokernel of $f_1$ is similarly pseudo-null by assumption, and it has second Chern class 
	$$
		c_2(\coker(f_1)) = c_2(X_{\Sigma}^{\psi}) + c_2(\mc{K}_{\mc{S}^c}^{\psi}) = c_2(X_{\Sigma}^{\psi}) 
		+ c_2(\mc{K}_{\overline{\Sigma}}^{\psi}).
	$$
	The result now follows.
\end{proof}

 \begin{remark} The last two terms in equation (\ref{eq:c2formulaquarticone}) give ``common trivial zeros in codimension 2" for $\mc{L}_{\mc{S}_1,\psi}$ and $\mc{L}_{\mc{S}_2,\psi}$. 
Here, by ``common zeros'', we mean codimension two points which
are in the support of the maximal pseudo-null submodule of $\Lambda/(\mc{L}_{\mc{S}_1,\psi},\mc{L}_{\mc{S}_2,\psi})$.  
To illustrate this, note that $T_1$ and $T_2$ in $\zp\ps{T_1,T_2}$ share a common zero at the point $(T_1,T_2) = (0,0)$, viewed as functions on the product of two $p$-adic open discs  of radius $1$ around the origin in $\overline{\mathbb{Q}}_p$.  This corresponds to the fact that $\zp\ps{T_1,T_2}/(T_1,T_2)$  is a non-trivial pseudo-null module supported on the codimension two prime $(T_1,T_2)$.
By ``trivial zeros", we mean arising from trivial zeros of the corresponding Katz $p$-adic $L$-functions, as in Remark \ref{trivzero}.

The common trivial zeros of codimension two arise from the triviality of characters on decomposition groups and are described by
Remark \ref{rem:Us}. That is, $\mc{K}_{\mf{p}}^{\psi}$ for $\mf{p} \in \overline{\mc{S}}_1 \cap \overline{\mc{S}}_2$ (resp., 
$(\mc{K}_{\mf{p}}^{\omega\psi^{-1}})^{\iota}(1)$ for $\mf{p} \in \mc{S}_1 \cap \mc{S}_2$) has nontrivial second Chern class if and only if
$\psi|_{\Delta_{\mf{p}}} = 1$ (resp., $\omega\psi^{-1}|_{\Delta_{\mf{p}}} = 1$) and $r_{\mf{p}} = 2$. For such a $\mf{p}$, the resulting second Chern class comes from the ideal determining the corresponding quotient in Remark \ref{rem:explicit}.
\end{remark}

\section{Canonical subquotients in the lower central series}
\label{s:grouptheory}

Let $\Pi$ be a profinite group.  
The lower central series of $\Pi$ is defined by $\Pi_0 = \Pi$,
and by letting $\Pi_i$ be the closure of  $[\Pi,\Pi_{i-1}]$ for $i \ge 1$. The maximal abelian quotient of $\Pi$ in the category
of profinite groups is
$\Pi^{\ab} = \Pi/\Pi_1$.  

We have a canonical commutator pairing
$$
\langle \ \, , \  \rangle \colon \Pi^{\ab} \times \Pi^{\ab} \to \Pi_1/\Pi_2
$$
defined on $x,y \in \Pi$ by
$$\langle \overline{x},\overline{y} \rangle = [x,y] \cdot \Pi_2,$$ 
where $[x,y] = xyx^{-1}y^{-1}$ and $\overline{x}$ is the image of $x$ in $\Pi^{\ab}$. 
(Note that $\Pi_1/\Pi_2$ is central in $\Pi/\Pi_2$, so this is well-defined.)
This is an alternating
pairing, and the image of the pairing generates all of $\Pi_1/\Pi_2$. 

Suppose $\Phi$ is a subgroup of  the group $\mr{Aut}(\Pi)$ of continuous automorphisms of $\Pi$.
Then $\Phi$ acts on all terms in the lower central series of $\Pi$.  The pairing 
$\langle \ \,,\ \rangle$ is equivariant for this action in the sense that 
\begin{equation*} \label{eq:equivariant}
\langle \sigma(\overline{x}),\sigma(\overline{y}) \rangle = \sigma(\langle \overline{x},\overline{y} \rangle ) \quad \mathrm{for} \quad \sigma \in \Phi.
\end{equation*}

The following lemma is clear. 

\begin{lemma}
\label{lem:Tadjoint} There is a largest quotient $(\Pi_1/\Pi_2)_{\Phi,\mathrm{s}}$ of $\Pi_1/\Pi_2$ by a $\Phi$-stable subgroup 
of the abelian group $\Pi_1/\Pi_2$ such that the pairing
$$
\langle \ \, , \ \rangle_{\Phi} \colon \Pi^{\ab} \times \Pi^{\ab} \to (\Pi_1/\Pi_2)_{\Phi,\mathrm{s}}
$$
is self-adjoint in the sense that
\begin{equation*} \label{eq:selfadj}
\langle \sigma(\overline{x}),\overline{y} \rangle_{\Phi} = \langle \overline{x},\sigma(\overline{y})\rangle_{\Phi}
\end{equation*}
for all $\sigma \in \Phi$ and $\overline{x},\overline{y} \in \Pi^{\ab}$.  \end{lemma}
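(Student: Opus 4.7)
The plan is to exhibit the desired quotient explicitly as the quotient by the smallest possible $\Phi$-stable closed subgroup that kills the self-adjointness defect of the commutator pairing, and then to verify maximality.

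First I would define $N_0 \subseteq \Pi_1/\Pi_2$ to be the closed subgroup topologically generated by all elements of the form
\[
\delta_{\sigma}(\overline{x},\overline{y}) := \langle \sigma(\overline{x}),\overline{y}\rangle - \langle \overline{x},\sigma(\overline{y})\rangle
\]
as $\sigma$ ranges over $\Phi$ and $\overline{x},\overline{y}$ over $\Pi^{\mr{ab}}$. By construction, the induced pairing $\Pi^{\mr{ab}} \times \Pi^{\mr{ab}} \to (\Pi_1/\Pi_2)/N_0$ is self-adjoint with respect to $\Phi$.

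The key step is to show that $N_0$ is $\Phi$-stable. For any $\tau \in \Phi$, using the $\Phi$-equivariance of $\langle\,,\rangle$ noted in the excerpt, one computes
\[
\tau\bigl(\delta_{\sigma}(\overline{x},\overline{y})\bigr) = \langle \tau\sigma(\overline{x}),\tau(\overline{y})\rangle - \langle \tau(\overline{x}),\tau\sigma(\overline{y})\rangle
= \delta_{\tau\sigma\tau^{-1}}\bigl(\tau(\overline{x}),\tau(\overline{y})\bigr),
\]
where we used $\tau\sigma = (\tau\sigma\tau^{-1})\tau$ to rewrite both arguments. Since $\Phi$ is a subgroup of $\mr{Aut}(\Pi)$, the automorphism $\tau\sigma\tau^{-1}$ again lies in $\Phi$, so the right-hand side is again a generator of $N_0$. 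Continuity of the $\Phi$-action then guarantees $\tau(N_0) \subseteq N_0$. This is the main (and essentially only) point requiring argument; everything else is formal.

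Finally I would verify maximality. Suppose $N \subseteq \Pi_1/\Pi_2$ is any $\Phi$-stable closed subgroup such that the pairing induced on $(\Pi_1/\Pi_2)/N$ is self-adjoint for $\Phi$. Then every generator $\delta_{\sigma}(\overline{x},\overline{y})$ of $N_0$ maps to $0$ in $(\Pi_1/\Pi_2)/N$, so $N_0 \subseteq N$, and consequently $(\Pi_1/\Pi_2)/N$ is a quotient of $(\Pi_1/\Pi_2)/N_0$. Setting $(\Pi_1/\Pi_2)_{\Phi,\mr{s}} = (\Pi_1/\Pi_2)/N_0$ and letting $\langle\,,\rangle_\Phi$ be the induced pairing then gives the lemma. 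The construction also makes $N_0$ unique, so the quotient is canonical.
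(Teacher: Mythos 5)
The paper offers no proof of this lemma, stating only that it ``is clear.'' Your proof is a correct and complete write-up of the obvious construction the authors had in mind: take the closed subgroup $N_0$ of $\Pi_1/\Pi_2$ generated by the self-adjointness defects $\delta_{\sigma}(\overline{x},\overline{y})$, check that $\Phi$-stability of $N_0$ follows from equivariance of the commutator pairing via the identity $\tau\bigl(\delta_\sigma(\overline{x},\overline{y})\bigr)=\delta_{\tau\sigma\tau^{-1}}\bigl(\tau(\overline{x}),\tau(\overline{y})\bigr)$ together with the fact that $\tau\sigma\tau^{-1}\in\Phi$ and $\tau$ is an automorphism, and observe that $N_0$ is by construction the smallest $\Phi$-stable closed subgroup yielding a self-adjoint quotient pairing. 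Two cosmetic points worth noting: first, since $\Phi$ is a subgroup, having $\tau(N_0)\subseteq N_0$ for every $\tau\in\Phi$ already gives $\tau(N_0)=N_0$ (apply the inclusion to $\tau^{-1}$ as well), so $N_0$ is genuinely $\Phi$-stable and not merely $\Phi$-invariant in a one-sided sense; second, the quotient is well defined because $\Pi_1/\Pi_2$ is abelian, so $N_0$ is automatically normal. Your argument fills in what the authors regarded as a routine verification, and it is correct.
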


\begin{remark}
\label{ex:niceex}  We add an $\mathrm{``s"}$ to the subscript so that there is no confusion of $(\Pi_1/\Pi_2)_{\Phi,\mathrm{s}}$
with the coinvariants of $\Phi$ acting on $\Pi_1/\Pi_2$.
Suppose that $\Pi$ is a closed normal subgroup of a profinite group $\tilde{\Pi}$.  The conjugation action of
$\tilde{\Pi}$ on $\Pi$ gives a subgroup $\Phi$ of $\mr{Aut}(\Pi)$ to which one can apply Lemma \ref{lem:Tadjoint}.  
\end{remark}

The following result is a topological variant on exercises in \cite{Brown}. 
The key ingredient is the universal coefficient theorem for group homology and group cohomology;  see 
\cite[Exercise 3, \S III.1]{Brown}.

\begin{proposition}  
\label{prop:Brown}
Let $H$ be an abelian pro-$p$ group acting trivially on a discrete abelian group $A$.  
Let $H \wedge_{\zp} H$ be the (completed) wedge product of $H$ with itself in the
category of abelian pro-$p$ groups.  
Then there is an exact sequence
\begin{equation} \label{eq:exact1}
0 \to \Ext^1(H,A) \to \rH^2(H,A) \xrightarrow{\theta} \Hom(H \wedge_{\zp} H,A) \to 0
\end{equation}
of abelian groups defined in the following way, where here $\Hom$ and $\Ext^1$ are taken in the category of 
topological abelian groups.  Each class in $\rH^2(H,A)$ is represented
by a continuous two cocycle $f \colon H \times H \to A$ normalized so that $f(0,h) = f(h,0) = 0$ for all $h \in H$.
The class $[f] \in \rH^2(H,A)$ is sent by $\theta$ to the homomorphism
$c \in \Hom(H \wedge_{\zp} H,A)$ defined by $c(h_1 \wedge h_2) = f(h_1,h_2) - f(h_2,h_1)$.
Moreover, suppose that
$$0 \to A \to \tilde{H} \to H \to 0$$
is a central extension of groups with class represented by $f$.  The function $c$ is given
by $c(h_1 \wedge h_2) = [\tilde{h}_1,\tilde{h}_2]$ for any lifts $\tilde{h}_1$ and $\tilde{h}_2$ of $h_1$ and $h_2$ to $\tilde{H}$.
\end{proposition}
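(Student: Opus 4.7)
The plan is to deduce the result from a continuous version of the universal coefficient theorem for cohomology of the abelian pro-$p$ group $H$, exploiting the fact that for abelian groups the Schur multiplier $\rH_2(H,\Z)$ is the wedge square, here replaced by $H \wedge_{\zp} H$ in the pro-$p$ setting.  I would work directly with explicit continuous normalized $2$-cocycles rather than invoking the universal coefficient theorem as a black box, since the topological version is less standard.

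First, check that $\theta$ is well defined.  If $f$ is a normalized continuous $2$-cocycle, the cocycle identity $f(h_1,h_2+h_3) + f(h_2,h_3) = f(h_1+h_2,h_3) + f(h_1,h_2)$, combined with the variant obtained by swapping the two arguments in each term, forces $c(h_1,h_2) := f(h_1,h_2) - f(h_2,h_1)$ to be biadditive.  It is clearly alternating ($c(h,h) = 0$) and continuous, and any normalized coboundary $f(h_1,h_2) = g(h_1+h_2) - g(h_1) - g(h_2)$ is symmetric so gives $c = 0$.  Thus $\theta$ descends to a homomorphism $\rH^2(H,A) \to \Hom(H \wedge_{\zp} H, A)$.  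To identify its kernel with $\Ext^1(H,A)$, observe that $f \in \ker\theta$ iff $f$ is symmetric.  For a continuous set-theoretic section $s$ of the associated central extension $0 \to A \to \tilde H \to H \to 0$ satisfying $s(h_1)s(h_2) = f(h_1,h_2)\cdot s(h_1+h_2)$, one computes
\[
	[s(h_1)a_1, s(h_2)a_2] = s(h_1)s(h_2)s(h_1)^{-1}s(h_2)^{-1} = f(h_1,h_2) - f(h_2,h_1)
\]
since $A$ is central, so $\tilde H$ is abelian iff $f$ is symmetric.  Continuous abelian extensions of $H$ by $A$ in the category of topological abelian groups are classified by $\Ext^1(H,A)$, giving both the identification of $\ker\theta$ and the commutator formula in the final assertion.

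For surjectivity, given a continuous homomorphism $c \colon H \wedge_{\zp} H \to A$, I would produce a cocycle mapping to it by breaking symmetry: choose a topological generating set $\{e_i\}_{i \in I}$ of $H$ with a total order on $I$, define $f(e_i,e_j) = c(e_i \wedge e_j)$ for $i < j$ and $f(e_i,e_j) = 0$ otherwise, and extend $\zp$-biadditively and continuously.  The cocycle identity is then automatic from biadditivity, and $\theta([f]) = c$ by construction.  The main obstacle I anticipate is handling the topological structure rigorously: ensuring that continuous set-theoretic sections of the relevant central extensions exist, that $\zp$-biadditive continuous maps genuinely factor through the completed wedge product $H \wedge_{\zp} H$, and that continuous $\Ext^1$ coincides with the $\Ext^1$ obtained from the bar resolution by continuous cochains.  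Since $A$ is discrete, every continuous map from $H$ or $H \wedge_{\zp} H$ into $A$ factors through a finite quotient of $H$, so each of these issues can be reduced to the classical universal coefficient theorem for finite abelian groups and then passed to the inverse limit, compatibly with the given description of $\theta$.
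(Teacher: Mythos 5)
Your proof is correct, and it is in substance the same argument the paper gestures at: the paper's entire proof consists of citing two exercises from Brown's book (Exercise 8 of \S IV.3 and Exercise 5 of \S V.6), which together yield the universal coefficient sequence for abelian groups, and asserting that ``the topological version'' holds. You have spelled out what those exercises establish: the cocycle identity forces the antisymmetrization $c$ to be biadditive and alternating, coboundaries are symmetric so $\theta$ is well defined, a class $[f]$ lies in $\ker\theta$ if and only if $f$ is symmetric if and only if the central extension $\tilde{H}$ is abelian (your commutator computation is precisely the paper's final assertion), and surjectivity follows by the ordered-basis construction. You also supply the step the paper leaves implicit, namely reducing the topological statement to the classical one by observing that, with $A$ discrete, continuous maps from $H$ and $H\wedge_{\zp}H$ factor through finite quotients, so $\rH^2(H,A)$, $\Ext^1(H,A)$, and $\Hom(H\wedge_{\zp}H,A)$ are all direct limits over the finite abelian quotients $H/U$; this is indeed the right way to make the paper's ``topological version'' precise, and the ordered-basis surjectivity argument then need only be carried out for finite abelian $p$-groups, where it is unproblematic. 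Same underlying route; you fill in the details the paper delegates to citations.
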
 

\begin{proof}
The map $\theta$ is the topological version of the map defined in Exercise 8 of \S IV.3 of \cite{Brown}.  In part (c) of this exercise, the
kernel of $\theta$ is identified with $\Ext^1(H,A)$.  The
steps involved in showing that  (\ref{eq:exact1}) is exact are outlined in Exercise 5 of \S V.6 of \cite{Brown}.  
 \end{proof}

For the remainder of this section, $G$ will be a profinite group and $\Pi$ will be its maximal pro-$p$ quotient. Let 
 $X = \Pi^{\ab}$ be the maximal abelian, pro-$p$ quotient of $\Pi$. 
Applying Proposition \ref{prop:Brown} in this context, we get a surjective homomorphism
\begin{equation}
\label{eq:thetanice}
\theta_X \colon \rH^2(X,\qp/\zp) \to \Hom(X \wedge_{\zp} X,\qp/\zp), 
\end{equation}
and the kernel of $\theta_X$ is the set of $[f] \in \rH^2(X,\qp/\zp)$ which represent abelian group
extensions of $X$ by $\qp/\zp$.
Let us take $B = \ker(G \to X)$, which is a closed subgroup of $G$.  
We have the Hochschild-Serre spectral sequence 
\begin{equation}
\label{eq:spectral}
E_2^{i,j}= \rH^i(X,\rH^j(B,\qp/\zp)) \Rightarrow \rH^{i+j}(G,\qp/\zp).
\end{equation}

\begin{lemma}
\label{lem:isom}
Suppose that $\rH^2(G,\qp/\zp) = 0$.
Both $\theta_X$ and the transgression map
$$
	\Tra \colon \Hom(B,\qp/\zp)^X \to \rH^2(X,\qp/\zp)
$$
are isomorphisms, yielding a composite isomorphism 
\begin{equation} \label{eq:thetanicer}
\Hom(B,\qp/\zp)^X \xrightarrow{\sim} \Hom(X \wedge_{\zp} X,\qp/\zp).
\end{equation}
\end{lemma}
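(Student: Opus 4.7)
The plan is to derive both assertions simultaneously from the Hochschild--Serre spectral sequence \eqref{eq:spectral} together with the explicit commutator description of $\theta_X$ given in Proposition \ref{prop:Brown}. First, I would observe that the inflation $\rH^1(X, \qp/\zp) \to \rH^1(G, \qp/\zp)$ is an isomorphism: every continuous homomorphism $G \to \qp/\zp$ factors through the maximal pro-$p$ quotient $\Pi$ and then through $X = \Pi^{\ab}$, because $\qp/\zp$ is abelian and $p$-torsion. The 5-term exact sequence attached to \eqref{eq:spectral},
\begin{equation*}
0 \to \rH^1(X, \qp/\zp) \to \rH^1(G, \qp/\zp) \to \Hom(B, \qp/\zp)^X \xrightarrow{\Tra} \rH^2(X, \qp/\zp) \to \rH^2(G, \qp/\zp),
\end{equation*}
then gives injectivity of $\Tra$ (the third map vanishes since the second is onto), and the hypothesis $\rH^2(G, \qp/\zp) = 0$ gives its surjectivity, so $\Tra$ is an isomorphism.

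For the remaining claim, Proposition \ref{prop:Brown} already yields surjectivity of $\theta_X$, so I need only prove injectivity. I would do this by unwinding the composite $\theta_X \circ \Tra$ directly. Given $\phi \in \Hom(B, \qp/\zp)^X$, the $X$-invariance of $\phi$ forces $\ker(\phi)$ to be normal in $G$, and the transgression $\Tra(\phi)$ is represented by the pushout central extension $0 \to \qp/\zp \to G/\ker(\phi) \to X \to 0$. Applying the commutator formula of Proposition \ref{prop:Brown} to this extension yields
\begin{equation*}
(\theta_X \circ \Tra)(\phi)(x_1 \wedge x_2) = \phi([g_1, g_2])
\end{equation*}
for any lifts $g_1, g_2 \in G$ of $x_1, x_2 \in X$, so $\theta_X \circ \Tra$ is the Pontryagin dual of the commutator map $X \wedge_{\zp} X \to \rH_1(B, \zp)_X$, with $(-)_X$ denoting coinvariants for the conjugation action.

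The key remaining input is the surjectivity of this commutator map, which I would obtain from the 5-term exact sequence in homology for $1 \to B \to G \to X \to 1$ with trivial $\zp$-coefficients:
\begin{equation*}
\rH_2(X, \zp) \to \rH_1(B, \zp)_X \to \rH_1(G, \zp) \to \rH_1(X, \zp) \to 0.
\end{equation*}
Since $G \to X$ realizes $X$ as the maximal abelian pro-$p$ quotient of $G$, the final arrow is the identity on $X$, so the penultimate arrow is zero and the map $\rH_2(X, \zp) \to \rH_1(B, \zp)_X$ is surjective. Identifying $\rH_2(X, \zp)$ with $X \wedge_{\zp} X$ for the abelian pro-$p$ group $X$ gives exactly the claimed surjectivity of the commutator map, so its Pontryagin dual $\theta_X \circ \Tra$ is injective. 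Combined with $\Tra$ being an isomorphism, this forces $\theta_X$ itself to be injective and hence, together with Proposition \ref{prop:Brown}, an isomorphism.

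The main obstacle is the clean identification of $\theta_X \circ \Tra$ with the Pontryagin dual of the homological commutator map: this requires careful bookkeeping with the pro-$p$ and continuity subtleties entering $\rH_1(B, \zp)_X$ and the commutator formula, but structurally it is a direct unwinding of the definitions of the transgression and of $\theta_X$ in Proposition \ref{prop:Brown}.
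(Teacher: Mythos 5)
Your argument for $\Tra$ being an isomorphism coincides with the paper's: both extract the five-term exact sequence from \eqref{eq:spectral}, note that $\rH^1(X,\qp/\zp) \to \rH^1(G,\qp/\zp)$ is surjective by the universal property of $X$ as the maximal abelian pro-$p$ quotient, and conclude via $\rH^2(G,\qp/\zp)=0$.

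For the injectivity of $\theta_X$, however, you take a genuinely different route. The paper argues concretely: given $\Tra(\phi)\in\ker(\theta_X)$, the commutator description in Proposition \ref{prop:Brown} forces $G/\ker(\phi)$ to be abelian; since $G/\ker(\phi)$ is a pro-$p$ quotient of $G$ and $X$ is the maximal abelian pro-$p$ quotient, $\ker(\phi)\supseteq B$, so $\phi=0$. You instead identify $\theta_X\circ\Tra$ with the Pontryagin dual of the commutator map $X\wedge_{\zp}X\to\rH_1(B,\zp)_X$, and deduce surjectivity of the latter from the homological five-term sequence (using $\rH_1(G,\zp)\cong\rH_1(X,\zp)\cong X$) together with the identification $\rH_2(X,\zp)\cong X\wedge_{\zp}X$. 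Both proofs turn on the same input -- the maximality of $X$ -- but yours is packaged homologically. The thing to be aware of is that the isomorphism $\rH_2(X,\zp)\cong X\wedge_{\zp}X$ is exactly the Pontryagin dual of the statement $\ker(\theta_X)=\Ext^1(X,\qp/\zp)=0$: the map $X\wedge_{\zp}X\hookrightarrow\rH_2(X,\zp)$ dual to $\theta_X$ is an inclusion by Proposition \ref{prop:Brown}, and asserting that it is an isomorphism is the same as asserting $\Ext^1(X,\qp/\zp)=0$. So once you have independently justified that identification for compact abelian pro-$p$ $X$ in the completed setting (which is true, but does require verifying $\varprojlim$ compatibilities rather than quoting the discrete Pontryagin--Hopf formula verbatim), the homological five-term argument becomes superfluous: $\theta_X$ is already an isomorphism directly by duality. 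The paper's route sidesteps the need for this identification by working with a single $\phi$ at a time, and is in that sense more elementary; your route makes the structural content (surjectivity of the commutator map) explicit, at the cost of invoking a fact that is equivalent to what one is trying to prove and so needs its own careful proof.
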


\begin{proof}  
The spectral sequence \eqref{eq:spectral} and the triviality of $\rH^2(G,\qp/\zp)$ gives a four-term exact sequence
of base terms
\begin{multline*}
	0 \to \Hom(X,\qp/\zp) \xrightarrow{\Inf} \Hom(G,\qp/\zp) \\ 
	\xrightarrow{\Res} \Hom(B,\qp/\zp)^X \xrightarrow{\Tra} \rH^2(X,\qp/\zp) \to 0.
\end{multline*}
The inflation map $\Inf$
is surjective as $\qp/\zp$ is a direct limit of $p$-groups and
$X$ is the maximal abelian pro-$p$ quotient of $\Pi$.   
Thus $\Tra$ is an isomorphism.

We know from Proposition \ref{prop:Brown} that  $\theta_X$ is surjective.  Since $\Tra$ is an isomorphism, we may write any 
element in the kernel of $\theta_X$ as $\Tra(\phi)$ for some $\phi \in \Hom(B,\qp/\zp)^X$.
Then $\ker(\phi)$ is a subgroup of $B$ such that $B/\ker(\phi) \cong \im(\phi)$ is a
finite cyclic $p$-group.  We have a central extension of pro-$p$ groups
\begin{equation}
\label{eq:niceext}
1 \to \im(\phi) \to G/\ker(\phi) \to X \to 1
\end{equation}
since $G/B = X$ and $\phi$ is fixed by $X$. This extension provides the class of $-\!\Tra(\phi)$ (see \cite[Lemma 1.1]{sharifi-thesis}).
By Proposition \ref{prop:Brown} and the discussion which follows it,
the statement that $\theta_X(\Tra(\phi)) = 0$ is equivalent to the statement
that $G/\ker(\phi)$ is an abelian group.  However, $G /\ker(\phi)$
is then an abelian quotient of $\Pi$, and $X$
is the maximal abelian quotient of $\Pi$.  This proves that $B/\ker(\phi) $ is trivial 
in \eqref{eq:niceext}. But then $\phi$ is trivial on $B$, so $\phi = 0$.
\end{proof}

\begin{corollary}
	Let $Q$ be the maximal quotient of $\Pi$ that is a central 
	extension of $X$, and let $Z = \ker(Q \to X)$ be the abelian pro-$p$ group giving the extension. Then 
	$$
		Z = \Pi_1/\Pi_2 \cong X \wedge_{\zp} X.
	$$
\end{corollary}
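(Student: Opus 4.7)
The plan has three parts: identify $Q$ and $Z$ in terms of the lower central series, construct a natural surjection $X \wedge_{\zp} X \twoheadrightarrow \Pi_1/\Pi_2$ from the commutator pairing, and then prove this surjection is an isomorphism by a Pontryagin duality argument that reduces to Lemma \ref{lem:isom}.

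First, the identifications $Q = \Pi/\Pi_2$ and $Z = \Pi_1/\Pi_2$ are formal. By definition of the lower central series, $\Pi/\Pi_2$ is a central extension of $X = \Pi/\Pi_1$ with kernel $\Pi_1/\Pi_2$. Conversely, any quotient $\Pi \twoheadrightarrow Q'$ realizing a central extension of $X$ annihilates every commutator $[\pi,\pi']$ with $\pi' \in \Pi_1$, hence kills $\Pi_2$, so factors through $\Pi/\Pi_2$. The commutator pairing then yields a continuous $\zp$-bilinear alternating map $X \times X \to \Pi_1/\Pi_2$ whose image topologically generates the target, inducing a continuous surjection
\[ \psi \colon X \wedge_{\zp} X \twoheadrightarrow \Pi_1/\Pi_2. \]

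To show $\psi$ is injective I dualize. Both source and target are compact abelian pro-$p$ groups, so by Pontryagin duality with dualizing object $\qp/\zp$ it suffices to prove the dual
\[ \psi^{*} \colon \Hom(\Pi_1/\Pi_2,\qp/\zp) \to \Hom(X \wedge_{\zp} X, \qp/\zp) \]
is surjective (injectivity being automatic from surjectivity of $\psi$). I first observe that $\Hom(\Pi_1/\Pi_2,\qp/\zp) = \Hom(B, \qp/\zp)^X$: given an $X$-invariant continuous $\phi \colon B \to \qp/\zp$, its kernel is normal in $G$ and $G/\ker(\phi)$ is pro-$p$ (being a central extension of the pro-$p$ group $X$ by a pro-$p$ group), so $\ker(G \to \Pi) \subset \ker(\phi)$, making $\phi$ factor through $\Pi_1$, and $X$-invariance forces a further factoring through $\Pi_1/\Pi_2$. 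Then Lemma \ref{lem:isom}, whose hypothesis $\rH^2(G,\qp/\zp)=0$ we assume, provides an isomorphism
\[ \theta_X \circ \Tra \colon \Hom(B,\qp/\zp)^X \xrightarrow{\sim} \Hom(X \wedge_{\zp} X, \qp/\zp). \]

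It remains to identify this isomorphism, up to sign, with $\psi^{*}$. By the argument in the proof of Lemma \ref{lem:isom}, $-\Tra(\phi)$ is represented in $\rH^2(X,\qp/\zp)$ by the central extension $G/\ker(\phi) \twoheadrightarrow X$ with kernel $\im(\phi)$. By Proposition \ref{prop:Brown}, applying $\theta_X$ to this class sends $x_1 \wedge x_2$ to the commutator $[\tilde{x}_1, \tilde{x}_2]$ of any lifts in $G/\ker(\phi)$, which equals $\phi(\langle \overline{x}_1, \overline{x}_2\rangle) = \phi(\psi(x_1 \wedge x_2))$. Hence the Lemma-\ref{lem:isom} isomorphism coincides with $\pm\psi^{*}$, so $\psi^{*}$ is an isomorphism and therefore so is $\psi$. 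The main obstacle is this final compatibility — tracking the commutator description of $\theta_X$ through the central extension realizing transgression to match it with $\psi^{*}$ — but it is a direct unwinding of definitions already assembled in Lemma \ref{lem:isom} and Proposition \ref{prop:Brown}.
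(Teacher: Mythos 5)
Your proof is correct and follows essentially the same route as the paper: after identifying $\Hom(\Pi_1/\Pi_2,\qp/\zp)$ with $\Hom(B,\qp/\zp)^X$ via the central-extension criterion, one takes the Pontryagin dual of the isomorphism \eqref{eq:thetanicer} from Lemma \ref{lem:isom}. You additionally verify that this abstract isomorphism agrees, up to sign, with the dual of the map induced by the commutator pairing — a compatibility the paper's terse proof leaves implicit (it is built into the definition of $\theta_X$ in Proposition \ref{prop:Brown}) but which is what is actually used later in Proposition \ref{prop:theresult}.
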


\begin{proof} Inflation provides an injection from  $\mathrm{Hom}(Z,\mathbb{Q}_p/\mathbb{Z}_p)$ to $\mathrm{Hom}(B,\mathbb{Q}_p/\mathbb{Z}_p)^X$. It is an isomorphism because the kernel of an element of $\mathrm{Hom}(B,\mathbb{Q}_p/\mathbb{Z}_p)^X$ defines a central extension of $X$.  The corollary
now follows upon taking the Pontryagin dual of the isomorphism in (\ref{eq:thetanicer}).
\end{proof}

\section{Central self-adjoint extensions}
\label{s:maximal}

We continue with the notation of Sections \ref{s:CM} and \ref{s:maintheorems}, supposing that $n = 2$ 
and that $\ell = \rank_{\Lambda} X_{\mc{S}}^{\psi} = 2$.  This is equivalent to saying we have two CM types $\mc{S}_1$ and $ \mc{S}_2$
with the property that when $\mc{S} = \mc{S}_1 \cup \mc{S}_2$, the sum of the local degrees of the primes in $\mc{T}_1 = \mc{S}-\mc{S}_1$
is $2$, and the same is true for $\mc{T}_2 = \mc{S}-\mc{S}_2$.
We let $K_{\mc{S}}^{(p)}$ be the maximal pro-$p$ extension of $K$ inside the maximal
$\mc{S}$-ramified extension $K_\mc{S}$ of $K$.  Set $G_{K,\mc{S}} = \mathrm{Gal}(K_\mc{S}/K)$, $\Pi = \Gal(K_{\mc{S}}^{(p)}/K)$, and 
let $L_i$ denote the fixed field of $\Pi_i$ for $i \ge 1$.  In particular, using our previous notation, $L_1 = L$ is the maximal abelian pro-$p$
extension of $K$ which is unramified outside of $\mc{S}$ and  $X_{\mc{S}} = \Gal(L/K) = \Pi^{\ab}$.

The conjugation action of
$\tilde{\Pi} = \Gal(K_{\mc{S}}^{(p)}/E)$ on $\Pi$ gives a subgroup $\Phi$ of $\mr{Aut}(\Pi)$ to which one can apply 
Lemma \ref{lem:Tadjoint}, as in Remark \ref{ex:niceex}. 
The resulting pairing 
$$\langle \ , \ \rangle_{\Phi} \colon X_{\mc{S}} \times X_{\mc{S}} \to (\Pi_1/\Pi_2)_{\Phi,\mathrm{s}}$$
on $\Pi^{\ab}$ is the projection of the commutator pairing to the maximal quotient of $\Pi_1/\Pi_2$ for which it becomes self-adjoint with 
respect to the $\tilde{\Pi}$-action.  

The actions of $\tilde{\Pi}$ 
on $\Pi^{\ab}$ and on $\Pi_1/\Pi_2$ factor through $\Gal(K/E) = \mc{G} = \Delta \times \Gamma$,
where $\Delta = \Gal(F/E)$ is finite, abelian and of order prime to $p$ and $\Gamma = \zp^r$.
That is, $\Pi^{\ab} = X_\mc{S}$ and $\Pi_1/\Pi_2$ are modules for the group
ring $\Omega = \zp\ps{\mc{G}}$.  
 
The following lemma is clear.

\begin{lemma}
\label{lem:ourdef}
The kernel of the natural homomorphism $\Pi_1 \to (\Pi_1/\Pi_2)_{\Phi,\mathrm{s}}$
is $\Gal(K_{\mc{S}}^{(p)}/N)$, where $N$ is the maximal extension of $L$ inside
$K_{\mc{S}}^{(p)}$ having the following properties:  
\begin{enumerate}
\item[(i)] $N$ is Galois over $E$, and
$\Gal(N/L)$ is central in $\Gal(N/K)$;
\item[(ii)] the
commutator pairing
$$X_{\mc{S}}  \times X_{\mc{S}} = \Gal(L/K) \times \Gal(L/K) \to \Gal(N/L)$$
resulting from (i) is alternating and self-adjoint with respect to the action of
$\mc{G}$ by conjugation on $X_\mc{S}$.
\end{enumerate}
\end{lemma}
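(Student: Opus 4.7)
The plan is a routine application of the Galois correspondence, and I do not anticipate any serious obstacle. Let $\bar{A}$ denote the defining $\Phi$-stable subgroup of $\Pi_1/\Pi_2$ in Lemma \ref{lem:Tadjoint}, so that $(\Pi_1/\Pi_2)_{\Phi,\mathrm{s}} = (\Pi_1/\Pi_2)/\bar{A}$, and let $A$ be the preimage of $\bar{A}$ in $\Pi_1$. Then $A$ is a closed subgroup of $\Pi_1$ containing $\Pi_2$, and the kernel of the map $\Pi_1 \to (\Pi_1/\Pi_2)_{\Phi,\mathrm{s}}$ is exactly $A$. Setting $M = (K_{\mc{S}}^{(p)})^A$, the task reduces to showing $M = N$.

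First I would translate the conditions on $\bar{A}$ into Galois-theoretic conditions on $M$. The group $\Phi$ is the image of $\tilde{\Pi} = \Gal(K_{\mc{S}}^{(p)}/E)$ in $\mathrm{Aut}(\Pi)$ under conjugation; since $\Pi_1/\Pi_2$ is central in $\Pi/\Pi_2$ by definition of the lower central series, $A$ is automatically normal in $\Pi$, and $\Phi$-stability of $\bar{A}$ is equivalent to $A$ being normal in $\tilde{\Pi}$. By the Galois correspondence, this amounts to $M/E$ being Galois. Moreover, the inclusion $\Pi_2 \subset A$ yields the centrality of $\Pi_1/A = \Gal(M/L)$ in $\Pi/A = \Gal(M/K)$, and conversely any such centrality forces $[\Pi,\Pi_1] \subset A$, hence $\Pi_2 \subset A$. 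Thus closed $\Phi$-stable subgroups of $\Pi_1$ containing $\Pi_2$ correspond bijectively to intermediate fields $M$ satisfying condition (i) of the lemma.

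Next, the induced commutator pairing $X_{\mc{S}} \times X_{\mc{S}} \to \Pi_1/A = \Gal(M/L)$ is automatically alternating, and the $\Phi$-action on $X_{\mc{S}} = \Pi^{\mathrm{ab}}$ and on $\Gal(M/L)$ factors through $\mc{G} = \tilde{\Pi}/\Pi$. So self-adjointness with respect to $\Phi$ is the same as self-adjointness with respect to the $\mc{G}$-conjugation action, which is exactly condition (ii). Finally, the maximality of the quotient $(\Pi_1/\Pi_2)_{\Phi,\mathrm{s}}$ in Lemma \ref{lem:Tadjoint} translates to minimality of $A$, equivalently maximality of $M$, forcing $M = N$ and completing the identification $\ker(\Pi_1 \to (\Pi_1/\Pi_2)_{\Phi,\mathrm{s}}) = \Gal(K_{\mc{S}}^{(p)}/N)$.
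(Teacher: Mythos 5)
Your proof is correct and supplies the Galois-correspondence details that the paper omits (the paper simply asserts the lemma is clear). You correctly translate $\Phi$-stability of $\bar{A}$ into normality of $A$ in $\tilde{\Pi}$ (hence $M/E$ Galois), $\Pi_2 \subset A$ into centrality of $\Gal(M/L)$ in $\Gal(M/K)$, use that the $\Phi$-action on $\Pi^{\ab}$ and $\Pi_1/\Pi_2$ factors through $\mc{G}$ (so $\Phi$-self-adjointness agrees with $\mc{G}$-self-adjointness), and conclude by the extremal characterization in Lemma \ref{lem:Tadjoint}. The correspondence you exhibit also implicitly establishes that a unique maximal such $N$ exists, which is worth noting since the lemma's statement presupposes it.
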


We also need the following consequence of weak Leopoldt, which we prove for more general sets $\mc{S}$.

\begin{lemma}
\label{lem:BigTarBaby} For any subset $\mc{S}$ of $S_f$ containing a CM type, the group $\rH^2(G_{K,\mc{S}},\qp/\zp)$ is trivial.
\end{lemma}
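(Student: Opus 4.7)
The plan is to recognize the lemma as a weak Leopoldt statement specific to our CM setting. First I would verify $K \supset \mu_{p^{\infty}}$: the cyclotomic $\zp$-extension $\Q^{\mathrm{cyc}} = \Q(\mu_{p^{\infty}})^{+}$ of $\Q$ (for $p$ odd) is unramified outside $p$, so its lift $E \cdot \Q^{\mathrm{cyc}}$ is a $\zp$-extension of $E$ contained in $\widetilde{E}$; combined with $F \supset \mu_p$, this gives $K = F\widetilde{E} \supset \Q(\mu_p)\cdot\Q^{\mathrm{cyc}} = \Q(\mu_{p^{\infty}})$. Hence $G_K$ acts trivially on $\mu_{p^{\infty}}$, so $\qp/\zp \cong \qp/\zp(1)$ as $G_K$-modules and the claim reduces to showing $\rH^2(G_{K,\mc{S}},\qp/\zp(1)) = 0$.

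Next I would invoke the standard dictionary: the vanishing $\rH^2(G_{K,\mc{S}},\qp/\zp(1)) = 0$ is equivalent to the $\Omega$-module $X_{\mc{S}}$ attaining its expected $\zp\ps{\Gamma}$-rank. Under our hypothesis that $\mc{S}$ contains a CM type $\Sigma$, Lemma~\ref{lem:ranklemma} provides $\rank_{\La} X_{\mc{S}}^{\psi} = d - d_{\mc{S}^c}$ for every character $\psi$ of $\Delta$ (ultimately via Hida-Tilouine's theorem that $X_{\Sigma}^{\psi}$ is $\La$-torsion), and summing over $\psi$ shows that $X_{\mc{S}}$ attains the expected $\zp\ps{\Gamma}$-rank.

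The remaining step is to translate ``expected rank'' into the cohomological vanishing. I would carry this out using the Iwasawa-theoretic Poitou-Tate duality of Proposition~\ref{thm:ss} (with $T = \zp$), combined with Lemma~\ref{lem:needlater}, which together identify $\rH^2(G_{K,\mc{S}},\qp/\zp(1))$ up to explicit correction terms with the Pontryagin dual of Iwasawa modules built from $Y_{\mc{S}^c}^{\omega\psi^{-1}}$; the $\La$-torsion-ness of these modules follows from Lemma~\ref{lem:ranklemma} applied to the complement $\mc{S}^c \subset \overline{\Sigma}$. The hard part will be executing this last translation cleanly, but the key structural input—that the decomposition groups $\Gamma_{\mf{p}}$ for $\mf{p} \in \Sigma$ rationally generate $\Gamma$, as shown in the proof of Lemma~\ref{lem:sweepup}—is precisely the standard criterion for weak Leopoldt to hold and is granted by hypothesis.
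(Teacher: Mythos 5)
Your outline points in broadly the right direction — both you and the paper ultimately lean on the torsion-ness of $X_\Sigma^\psi$ for a CM type $\Sigma$ via Lemma~\ref{lem:ranklemma} — but there is a real gap in the middle, and you have flagged it yourself (``the hard part will be executing this last translation cleanly'') without actually closing it.

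The gap is in your ``standard dictionary'' step. The equivalence you assert, that $\rH^2(G_{K,\mc{S}},\qp/\zp(1)) = 0$ is equivalent to $X_{\mc{S}}$ achieving its expected $\zp\ps{\Gamma}$-rank, is not a dictionary entry: the rank computation together with a global Euler characteristic argument tells you at best that the Pontryagin dual of $\rH^2(G_{K,\mc{S}},\qp/\zp)$ is $\zp\ps{\Gamma}$-torsion (i.e.\ has rank zero), not that it vanishes. To upgrade ``torsion'' to ``zero'' you would need a structural input asserting that $\rH^2(G_{K,\mc{S}},\qp/\zp)^\vee$ has no nonzero $\zp\ps{\Gamma}$-torsion. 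This is available for $\mc{S} = S_f$ (and is essentially the content of the weak Leopoldt theorem for the cyclotomic line that the paper cites), but for a proper subset $\mc{S} \subsetneq S_f$ it is not automatic and is in fact what needs to be proved. Moreover, the object in the lemma is the \emph{discrete} cohomology $\rH^2(G_{K,\mc{S}},\qp/\zp)$ of the group $\Gal(K_{\mc{S}}/K)$ of the $\mc{S}$-ramified extension, while Proposition~\ref{thm:ss} relates \emph{compact} Iwasawa cohomology groups $\rH^i_{\Sigma,\Iw}(K,T)$ built as cones over $G_{K,S}$ with local conditions at $\Sigma$; these are not literally the same objects, and your plan makes no attempt to bridge them.

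The paper sidesteps both issues at once. It first imports the vanishing $\rH^2(G_{K,S_f},\qp/\zp)=0$ from the known weak Leopoldt conjecture over the cyclotomic line (Greenberg), and then uses the Hochschild--Serre spectral sequence for $1 \to \Gal(K_{S_f}/K_{\mc{S}}) \to G_{K,S_f} \to G_{K,\mc{S}} \to 1$ to reduce the vanishing of $\rH^2(G_{K,\mc{S}},\qp/\zp)$ to the surjectivity of the restriction map $\rH^1(G_{K,S_f},\qp/\zp) \to \rH^1(\Gal(K_{S_f}/K_{\mc{S}}),\qp/\zp)^{G_{K,\mc{S}}}$, which dually becomes the injectivity of $J_{G} \to X_{S_f}$ with $J$ the abelianization of $\Gal(K_{S_f}/K_{\mc{S}})$. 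Since $J$ is generated by inertia over $\mc{S}^c$ and the corresponding local contribution is $I_{\mc{S}^c}$, the statement reduces to injectivity of $I_{\mc{S}^c} \to X_{S_f}$, which is exactly the injectivity in Lemma~\ref{lem:ranklemma}. This is a concrete chain of implications that never needs any structural claim about the torsion of $\rH^2$ for the sub-set $\mc{S}$, nor any comparison between discrete and compact Iwasawa cohomology. Your plan uses the same ultimate input, but the intermediate steps as you have written them would not compile into a proof.
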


\begin{proof}  
First, we recall that the weak Leopoldt conjecture implies the statement in the case of $S_f$.
That is, \cite[Props. 3 and 4]{Greenberg89} imply that $\rH^2(\mathrm{Gal}(K_{S_f}/F'E^{\mathrm{cyc}}), \qp/\zp)=0$ for any number
field $F'$ in $K_{S_f}$.  Since $E^{\mr{cyc}} \subset K$, we then need only take the direct limit over all finite extensions $F'$ of $F$ contained in $K$ to see that $\rH^2(G_{K,S_f},\qp/\zp) = 0$.

Given this, the exact sequence of base terms of the Hochschild-Serre spectral sequence arising from the exact sequence
$$
	1 \to \Gal(K_{S_f}/K_{\mc{S}}) \to G_{K,S_f} \to G_{K,\mc{S}} \to 1
$$
yields an exact sequence
\begin{equation}
\label{eq:secondseq}
\rH^1(G_{K,S_f},\qp/\zp) \xrightarrow{\Res} \rH^1(\Gal(K_{S_f}/K_{\mc{S}}),\qp/\zp)^{G_{K,\mathcal{S}}} \to \rH^2(G_{K,\mathcal{S}}, \qp/\zp) \to 0.
\end{equation}
Thus, it will suffice to show that the restriction map $\Res$ is surjective.  

Setting $G = G_{K,\mc{S}}$ to shorten notation
and letting $J$ denote the maximal abelian pro-$p$ quotient of $\Gal(K_{S_f}/K_{\mc{S}})$, 
the Pontryagin dual of $\Res$ is the map on Galois groups
$$
	J_G \to X_{S_f}
$$
from the $G$-coinvariant group of $J$ to the $p$-ramified Iwasawa module over $K$. It then suffices to see that this map is injective.

By definition, $J$ is generated by its inertia groups at places of $K_{\mc{S}}$ over $\mc{S}^c$.  By the usual transitivity of the
Galois action on places, any two decomposition groups at primes over the same prime of $K$ become identified in the coinvariant group $J_G$.  In particular, we may speak of the inertia group $T_w$ of $J_G$ at a prime $w$ of $K$ lying over a prime in $\mc{S}^c$.

As any such $w$ is unramified in $K_{\mc{S}}/K$, any decomposition group in $G$ at a place over $w$ is procyclic.  Let $N$ be the subfield of $K_{S_f}$ which is the fixed field of the kernel of the natural surjection $\mathrm{Gal}(K_{S_f}/K_{\mathcal{S}}) \to J_G$.    We have an exact sequence
$$
	1 \to J_G \to \Gal(N/K) \to G \to 1.
$$
Consequently, any decomposition group in $\Gal(N/K)$ at a place over $w$ is a central extension of a procyclic
group by an abelian group and is therefore itself abelian. In particular, $T_w$ is a quotient of the
inertia group $\mf{I}_w$ in the Galois group of the maximal abelian pro-$p$ extension of the completion $K_w$.

The product of all $\mf{I}_w$ over primes $w$ lying over primes in $\mc{S}^c$ can be identified with $I_{\mc{S}^c}$ of \eqref{eq:DI}. Since $J_G$ is generated by its inertia groups $T_w$, we obtain a surjective map $I_{\mc{S}^c} \to J_G$.  Composing
this with $J_G \to X_{S_f}$, it remains only to show that $I_{\mc{S}^c} \to X_{S_f}$ is injective.  This follows from the injectivity in Lemma \ref{lem:ranklemma}, since $\mc{S}$ contains a CM type and the character $\psi$ therein was arbitrary.
\end{proof}

Because of Lemma \ref{lem:BigTarBaby},  $\theta_{X_{\mc{S}}}$ of \eqref{eq:thetanice} is an isomorphism by Lemma \ref{lem:isom} applied with $G = G_{K,\mc{S}}$.
Dually, we then have canonical isomorphisms
$$
	\Gal(L_2/L) = \Pi_1/\Pi_2 \cong X_{\mc{S}} \wedge_{\zp} X_{\mc{S}}.
$$

\begin{remark}
Since $X_{\mc{S}}$ is rank two over $\Omega$, and $\Omega$ is free of infinite rank over $\zp$, 
the (completed) wedge product $X_{\mc{S}} \wedge_{\zp} X_{\mc{S}}$
is not finitely generated over $\Omega$.  Thus $\Gal(L_2/L)$
is by Lemma \ref{lem:isom} also not finitely generated over $\Omega$.  In other words,
the second graded quotient in the lower central series of the maximal pro-$p$
quotient of $G_{K,{\mc{S}}}$ is too big for us to readily attach to it invariants arising
from finitely generated $\Omega$-modules. We remedy this by taking (completed) wedge products over $\Omega$ 
and considering the associated quotients of $\Gal(L_2/L)$.
\end{remark}

\begin{remark}
\label{rem:functorialstuff}  Suppose $M$ is a profinite abelian group with a continuous action of $\Omega = \mathbb{Z}_p\ps{\mathcal{G}}$.
The completed wedge product $M \wedge_{\mathbb{Z}_p} M$ is the topological completion of the
usual wedge product of $M$ with itself as a $\mathbb{Z}_p$-module, and there is a universal continuous alternating
bilinear $\mathbb{Z}_p$-module map $M \times M \to M \wedge_{\mathbb{Z}_p} M$.  Similarly,
$M \wedge_{\Omega} M$ is the topological completion of the usual wedge product, and there is a universal
continuous alternating bilinear $\Omega$-module map $M \times M \to M \wedge_{\Omega} M$.  This
implies that $M \wedge_{\Omega} M$ is the quotient of $M \wedge_{\mathbb{Z}_p} M$ by the closure
of the subgroup generated by all elements of the form $gm_1 \wedge m_2 - m_1 \wedge gm_2$
with $g \in \mathcal{G}$ and $m_1, m_2 \in M$.
\end{remark}

From this point forward, we use the notation $N$ for the field $N$ of Lemma \ref{lem:ourdef}.
Recall that by the $\psi$-isotypical component of a compact $\Omega$-module $M$, we mean
$M^{\psi} = M \cotimes{\zp[\Delta]} W$ for the map $\zp[\Delta] \to W$ induced by $\psi$.

\begin{proposition}
\label{prop:theresult}  Suppose that $n = \ell = 2$.
\begin{enumerate}
\item[(i)] The commutator pairing induces an isomorphism $X_{\mc{S}} \wedge_{\Omega} X_{\mc{S}} \xrightarrow{\sim} \Gal(N/L)$.
\item[(ii)] Under the isomorphism in (i), the action of $g \in \mc{G} = \Gal(K/E)$ on $\Gal(N/L)$
by conjugation corresponds to the action of $g^2$ on $X_\mc{S} \wedge_{\Omega} X_{\mc{S}}$
which sends $v_1 \wedge v_2$ to $g^2 v_1 \wedge v_2 = g v_1 \wedge gv_2$.
\item[(iii)] 
The $\psi$-isotypical component of $X_{\mc{S}} \wedge_{\Omega} X_{\mc{S}}$ 
is isomorphic to $X_{\mc{S}}^\psi \wedge_{\Omega_W} X_{\mc{S}}^\psi$, where $\Omega_W=W\hat{\otimes}_{\zp}\Omega$.  
\end{enumerate}
\end{proposition}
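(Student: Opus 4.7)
My plan is to prove parts (i)--(iii) in sequence, assembling (i) from material already in Sections 6--7, deducing (ii) by a short calculation in the $\Omega$-wedge, and handling (iii) by base change and isotypical decomposition.

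For part (i), I would begin by invoking Lemma \ref{lem:BigTarBaby}, which gives $\rH^2(G_{K,\mc{S}}, \qp/\zp) = 0$, so the hypothesis of Lemma \ref{lem:isom} is met with $G = G_{K,\mc{S}}$. The corollary following Lemma \ref{lem:isom} then yields, via the commutator pairing, a $\mc{G}$-equivariant isomorphism
\[
\Pi_1/\Pi_2 \;\cong\; X_{\mc{S}} \wedge_{\zp} X_{\mc{S}}.
\]
By Lemma \ref{lem:ourdef}, $\Gal(N/L)$ is the largest quotient of $\Pi_1/\Pi_2$ on which the induced commutator pairing becomes self-adjoint with respect to $\mc{G}$; in the notation of Lemma \ref{lem:Tadjoint}, this is $(\Pi_1/\Pi_2)_{\Phi,\mr{s}}$. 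The self-adjointness condition $\langle gv,w\rangle = \langle v, gw\rangle$ corresponds under the above isomorphism to imposing the relation $gv \wedge w = v \wedge gw$ for every $g \in \mc{G}$; by Remark \ref{rem:functorialstuff}, this is exactly the defining relation for the passage from $\wedge_{\zp}$ to $\wedge_{\Omega}$. Thus $\Gal(N/L) \cong X_{\mc{S}} \wedge_{\Omega} X_{\mc{S}}$.

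For part (ii), let $v_1, v_2 \in X_{\mc{S}}$ and let $\tilde{v}_1, \tilde{v}_2$ be lifts to $\Pi$. The commutator pairing sends $(v_1,v_2)$ to the class of $[\tilde{v}_1,\tilde{v}_2] \in \Pi_1/\Pi_2$, which corresponds under the isomorphism of (i) to $v_1 \wedge v_2$. For $g \in \mc{G}$, conjugation by any lift $\tilde{g} \in \tilde{\Pi}$ sends $[\tilde{v}_1,\tilde{v}_2]$ to $[\tilde{g}\tilde{v}_1\tilde{g}^{-1}, \tilde{g}\tilde{v}_2\tilde{g}^{-1}]$, whose image in $X_{\mc{S}} \wedge_{\Omega} X_{\mc{S}}$ is $gv_1 \wedge gv_2$. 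Using the $\Omega$-wedge relation, $gv_1 \wedge gv_2 = g(v_1 \wedge gv_2) = g \cdot g(v_1 \wedge v_2) = g^2(v_1 \wedge v_2)$, which is the claim.

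For part (iii), I would first extend scalars from $\zp$ to $W$. Because completed tensor products with $W$ are right exact on pseudo-compact $\zp$-modules (as recorded in Remark \ref{rem:eigsp}) and because the completed $\Omega$-wedge is defined by generators and relations (Remark \ref{rem:functorialstuff}), there is a canonical isomorphism
\[
(X_{\mc{S}} \wedge_{\Omega} X_{\mc{S}}) \cotimes{\zp} W \;\cong\; (X_{\mc{S}} \cotimes{\zp} W) \wedge_{\Omega_W} (X_{\mc{S}} \cotimes{\zp} W).
\]
Since $|\Delta|$ is prime to $p$, the idempotents in $W[\Delta]$ decompose $X_{\mc{S}} \cotimes{\zp} W = \bigoplus_{\chi} X_{\mc{S}}^{\chi}$. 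The relation $\delta v \wedge w = v \wedge \delta w$ in the $\Omega_W$-wedge forces $\chi_1(\delta) v \wedge w = \chi_2(\delta) v \wedge w$ for $v \in X_{\mc{S}}^{\chi_1}$, $w \in X_{\mc{S}}^{\chi_2}$ and every $\delta \in \Delta$, so cross-terms with $\chi_1 \neq \chi_2$ vanish. Hence
\[
(X_{\mc{S}} \cotimes{\zp} W) \wedge_{\Omega_W} (X_{\mc{S}} \cotimes{\zp} W) \;=\; \bigoplus_{\chi} X_{\mc{S}}^{\chi} \wedge_{\Omega_W} X_{\mc{S}}^{\chi},
\]
and extracting the $\psi$-component of both sides yields the desired isomorphism.

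The step I expect to require the most care is part (iii), specifically the base-change compatibility between completed wedge products and completed tensor products; the other steps are essentially formal once the results of Sections 6--7 are in hand. The conceptual subtlety in (i) is recognizing that the categorical construction $\wedge_{\Omega}$ coincides exactly with the self-adjoint quotient $(\Pi_1/\Pi_2)_{\Phi,\mr{s}}$, but this is transparent from the generators-and-relations description of Remark \ref{rem:functorialstuff}.
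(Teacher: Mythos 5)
Your proposal is correct and follows essentially the same route as the paper's proof: dualizing via $\rH^2(G_{K,\mc{S}},\qp/\zp)=0$ and identifying the self-adjoint condition on characters of $\Pi_1/\Pi_2$ with the defining relation for the $\Omega$-wedge for part (i), using equivariance of the commutator pairing plus $\Omega$-bilinearity for (ii), and using the isotypic decomposition over $W[\Delta]$ plus the $\delta v \wedge w = v \wedge \delta w$ relation to kill cross-terms for (iii). The only material difference is that the paper's proof of (iii) goes one step further and shows that the canonical surjection $X_{\mc{S}}^\psi \wedge_{\Lambda} X_{\mc{S}}^\psi \to X_{\mc{S}}^\psi \wedge_{\Omega_W} X_{\mc{S}}^\psi$ is an isomorphism (using $\mc{G} = \Delta \times \Gamma$); this extra identification is not part of the stated claim but is used later (see Remark \ref{rem:result} and Theorem \ref{thm:mainquartic}), so it is worth noting, though its omission does not make your proof of the Proposition as stated incomplete.
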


\begin{proof}
An element
 $h \in \Hom(X_{\mc{S}} \wedge_{\zp} X_{\mc{S}},
\qp/\zp) = \Hom(\Gal(L_2/L), \qp/\zp)$ lies in the subgroup
$\Hom(X_{\mc{S}} \wedge_{\Omega} X_{\mc{S}},
\qp/\zp)$ if and only if
$$h(gx_1 \wedge x_2) = h(x_1 \wedge gx_2)$$
for all $g \in \mc{G}$ and $x_1, x_2 \in X_{\mc{S}}$, so if and only if $h$ is self-adjoint for the action of $\mc{G}$.  
In view of the definitions of $L_2$ and $N$, this shows (i).

For (ii), note that the commutator
pairing is equivariant with respect to conjugation.  Thus if $g \in \mc{G} = \Gal(K/E)$,
$v_1, v_2 \in X_{\mc{S}}$ and $v_1 \wedge v_2 \in X_{\mc{S}} \wedge_{\Omega} X_{\mc{S}} = \Gal(N/L)$, the conjugate $\tilde{g} (v_1 \wedge v_2) \tilde{g}^{-1}$ of
$v_1 \wedge v_2$ by a lift $\tilde{g}$ of $g$ to $\Gal(N/E)$ equals
$g(v_1) \wedge g(v_2)$.  Since the commutator pairing is $\mc{G}$-adjoint when we take its values in $\Gal(N/L)$, we find
$$g(v_1) \wedge g(v_2) = g^2(v_1) \wedge v_2.$$

For part (iii), we have 
$$
	W \hat{\otimes}_{\mathbb{Z}_p} X_{\mathcal{S}} = \bigoplus_{\psi} X^{\psi}_{\mathcal{S}}
$$ 
where the sum is over the characters $\psi \colon \Delta \to W^{\times}$.  
For $i \in \{1, 2\}$, let $\psi_i \in  \mathrm{Hom}(\Delta,W^{\times})$ and $v_i \in X_{\mc{S}}^{\psi_i}$.  
The action of $\sigma \in \Delta$ on the element $v_1 \wedge v_2$ of $X_{\mc{S}}^{\psi_1} \wedge_{\Omega_W} X_{\mc{S}}^{\psi_2}$ is given by both
\begin{eqnarray*}(\sigma v_1) \wedge v_2 = \psi_1(\sigma)(v_1 \wedge v_2) & \mr{and} &  v_1 \wedge (\sigma v_2) = \psi_2(\sigma)(v_1 \wedge v_2).
\end{eqnarray*}
Thus $v_1 \wedge v_2 = 0$ if $\psi_1 \ne \psi_2$, and $X_{\mc{S}}^{\psi} \wedge_{\Omega_W} X_{\mc{S}}^{\psi}$  is the $\psi$-isotypical component of $X_{\mc{S}} \wedge_{\Omega} X_{\mc{S}}$.
By Remark \ref{rem:functorialstuff}, the canonical surjection
$$
	\mu \colon X_{\mc{S}}^\psi \wedge_{\Lambda} X_{\mc{S}}^\psi \to X_{\mc{S}}^\psi \wedge_{\Omega_W} X_{\mc{S}}^\psi
$$ 
is an homomorphism of $\Lambda$-modules which
identifies $ X_{\mc{S}}^\psi \wedge_{\Omega_W} X_{\mc{S}}^\psi $ with the quotient of $X_{\mc{S}}^\psi \wedge_{\Lambda} X_{\mc{S}}^\psi$
by the closure of the subgroup generated by all elements of the form $gv \wedge v' - v\wedge gv'$ with $g \in \mathcal{G}$ and $v, v' \in X_{\mc{S}}^\psi$.  However, $\mathcal{G} = \Delta \times \Gamma$,
and all such elements are zero both for $g \in \Delta$ and for $g \in \Gamma$, so we conclude $\mu$ is an isomorphism.    
\end{proof}

\begin{remark} 
\label{rem:result}
Phrased differently, part (ii) of Proposition \ref{prop:theresult} says that the action of $g \in \mc{G}$ on $X_\mc{S} \wedge_{\Omega} X_{\mc{S}}$
given by $g(v_1 \wedge v_2) = g(v_1) \wedge v_2 = v_1 \wedge g(v_2)$ for $v_1, v_2 \in X_{\mc{S}}$ is identified via part (i) with a canonical square root for the action of $g$ by conjugation on $\Gal(N/L)$.  Part (iii) tells us that $X_{\mc{S}}^\psi \wedge_{\Lambda} X_{\mc{S}}^\psi$ is identified with the $\psi$-isotypical component of $\Gal(N/L)$ with respect to this square root action.
\end{remark}

Let $P$ be one of $\mc{T}_1$ or $\mc{T}_2$.  We need to characterize
the image of $\bigwedge^2_{\Omega} I_P$ in $\bigwedge^2_{\Omega} X_{\mc{S}}$,
for $I_P$ associated to inertia groups at the primes over those in $P$, as defined in \eqref{eq:DI}.

\begin{proposition}
\label{prop:unramified}  
Let $N_P$ be the maximal extension of $L$ inside $N$
such that all the inertia subgroups in $\Gal(N_P/K)$
of primes over $P$
in $N_P$ are abelian. 
Under the map induced by the commutator pairing, the cokernel of the map
$$
	I_P \wedge_{\Omega} I_P \to X_{\mc{S}} \wedge_{\Omega} X_{\mc{S}}
$$
induced by the canonical map $I_P \to X_{\mc{S}}$ is identified with $\Gal(N_P/L)$.
\end{proposition}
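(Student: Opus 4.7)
The plan is to use the commutator-pairing isomorphism $X_{\mc{S}}\wedge_{\Omega}X_{\mc{S}}\xrightarrow{\sim}\Gal(N/L)$ from Proposition~\ref{prop:theresult}(i) to translate the image $U$ of the map $I_P\wedge_{\Omega}I_P \to X_{\mc{S}}\wedge_{\Omega}X_{\mc{S}}$ into a subgroup of $\Gal(N/L)$, and then to identify that subgroup with $\Gal(N/N_P)$.  The cokernel will then be $\Gal(N/L)/\Gal(N/N_P)\cong\Gal(N_P/L)$ as desired.

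First I would set up the dictionary between $\Omega$-submodules of the wedge and $\mc{G}$-stable subgroups of $\Gal(N/L)$.  By Proposition~\ref{prop:theresult}(ii), the one-sided $\Omega$-action on $X_{\mc{S}}\wedge_{\Omega}X_{\mc{S}}$ corresponds under the commutator-pairing isomorphism to the ``square root'' of the $\mc{G}$-conjugation action on $\Gal(N/L)$: the $g$-conjugation action matches the $g^2$-wedge-action.  Since $p$ is odd and $\Gamma$ is pro-$p$, squaring is bijective on $\Gamma$, and together with the prime-to-$p$ structure of $\Delta$ this yields that $\Omega$-submodules of $X_{\mc{S}}\wedge_{\Omega}X_{\mc{S}}$ correspond to closed $\mc{G}$-stable subgroups of $\Gal(N/L)$, namely subgroups that are normal in $\Gal(N/E)$.

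Next I would describe $U$ explicitly.  Since $I_{\mf{p}}=\Omega\cotimes{\Omega_{\mf{p}}}\mf{I}_{\mf{p}}$ is induced from the local inertia at a chosen prime $\mf{P}_{\mf{p}}$ of $K$ above $\mf{p}$, the image of $I_P$ in $X_{\mc{S}}$ is the $\Omega$-submodule generated by these inertia for $\mf{p}\in P$.  Hence $U$ is the $\Omega$-submodule of $X_{\mc{S}}\wedge_{\Omega}X_{\mc{S}}$ generated by wedges $\bar{a}\wedge\bar{b}$ with $a,b$ in these inertia.  Under the commutator pairing, each $\bar{a}\wedge\bar{b}$ is identified with the commutator $[\tilde{a},\tilde{b}]\in\Gal(N/L)$ of lifts $\tilde{a},\tilde{b}$ in the inertia subgroups $J_{\mf{P}}\subset\Gal(N/K)$.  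Taking $\Omega$-closure---equivalently, by the first step, $\mc{G}$-conjugation closure in $\Gal(N/L)$---the subgroup $U$ is the smallest closed $\mc{G}$-stable subgroup of $\Gal(N/L)$ containing all commutators $[J_{\mf{P}'},J_{\mf{P}''}]$ for primes $\mf{P}',\mf{P}''$ of $N$ over primes in $P$.

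Finally I would match $U$ with $\Gal(N/N_P)$.  By the centrality of $\Gal(N/L)$ in $\Gal(N/K)$, the defining condition on $N_P$---that the inertia subgroups in $\Gal(N_P/K)$ at primes over $P$ are abelian---translates into the condition that $\Gal(N/N_P)$ contain the commutators $[J_{\mf{P}'},J_{\mf{P}'}]$ for all relevant primes $\mf{P}'$ of $N$.  Combined with the $\mc{G}$-stability forced by $N_P$ being Galois over $E$, this characterizes $\Gal(N/N_P)$ as precisely the subgroup $U$ from the previous step, so $U=\Gal(N/N_P)$ and the cokernel identification follows.  The main obstacle I foresee is the reverse inclusion $U\subset\Gal(N/N_P)$: the $\Omega$-action on $U$ produces ``cross'' commutators $[\tilde{a},\tilde{b}]$ with $\tilde{a},\tilde{b}$ in inertia at distinct primes of $N$ over $P$, and one must show that these all lie in $\Gal(N/N_P)$.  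This will rely crucially on the self-adjointness of the commutator pairing built into the definition of $N$, which allows one to rewrite such cross commutators as within-prime commutators at suitably shifted primes via the square-root correspondence of the first step.
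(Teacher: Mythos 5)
The paper establishes this proposition by passing to Pontryagin duals: it shows that the kernel of the restriction map on $\Hom(\,\cdot\,,\qp/\zp)$ groups is exactly $\Hom(\Gal(N_P/L),\qp/\zp)$, using the isomorphism $\theta_{X_{\mc{S}}}$ of Lemma~\ref{lem:isom} to translate each $h$ in the kernel into the statement that a certain central extension of $X_{\mc{S}}$ has abelian subgroup $I^{\diamond}_P$ generated by all the inertia groups. Your proposal is the ``primal'' version of the same computation, working directly with the wedge products and subgroups of $\Gal(N/L)$ rather than with $\Hom$ groups; that is a perfectly legitimate alternative phrasing of the same idea.

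However, there is a genuine gap at exactly the spot you flag. You translate the defining condition on $N_P$ into ``$\Gal(N/N_P)$ contains the within-prime commutators $[J_{\mf{P}'},J_{\mf{P}'}]$,'' and you then try to show $U\subset\Gal(N/N_P)$ by reducing cross-prime commutators to within-prime ones at shifted primes via self-adjointness and the square-root correspondence. That reduction only works for cross commutators between two primes of $N$ lying over the \emph{same} prime $\mf{p}\in P$: for $a$ in inertia at $\mf{P}$ and $b$ in inertia at $g\mf{P}$, the $\Omega$-balanced identity $\bar{a}\wedge g\bar{b}=h\bar{a}\wedge h\bar{b}$ (with $h^2=g$) moves both factors to the single prime $h\mf{P}$. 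But when $P$ contains two distinct primes $\mf{p}\neq\mf{p}'$ of $E$ --- which is precisely the generic case here, since $\ell=2$ and $\mc{T}_i$ may consist of two degree-one primes --- there is no $h\in\mc{G}$ carrying $\mf{P}_{\mf{p}}$ to $\mf{P}_{\mf{p}'}$, and the cross terms $I^{\psi}_{\mf{p}}\otimes I^{\psi}_{\mf{p}'}$ inside $I_P\wedge_{\Omega}I_P$ are in fact what carries the $\Lambda$-rank. So the cross commutators at distinct $\mf{p},\mf{p}'$ cannot be rewritten as within-prime ones, and the inclusion $U\subset\Gal(N/N_P)$ does not follow from your translation of the definition.

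The resolution, and the reading that the paper's own proof actually uses, is that the condition defining $N_P$ should be taken to mean that the subgroup $I^{\diamond}_P$ of $\Gal(N_P/K)$ generated by \emph{all} the inertia groups at primes over $P$ is abelian --- not merely that each inertia group is abelian individually. With that reading, $\Gal(N/N_P)$ is by definition the smallest closed $\mc{G}$-stable subgroup of $\Gal(N/L)$ containing every commutator $[J_{\mf{P}'},J_{\mf{P}''}]$ (cross-prime included), which is exactly your $U$, and the identification is immediate without any square-root manipulation. If you keep your stricter per-prime reading, neither your argument nor the paper's proof closes the gap, since a self-adjoint $\Omega$-bilinear form can be isotropic on the image of each $I_{\mf{p}}$ separately while remaining nonzero on $I_{\mf{p}}\times I_{\mf{p}'}$.
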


\begin{proof}
We show that the kernel of the restriction map
$$\Hom(X_{\mc{S}} \wedge_{\Omega} X_{\mc{S}},\qp/\zp) \to
 \Hom(I_{P} \wedge_{\Omega} I_{P},\qp/\zp)
$$
is 
$\Hom(\Gal(N_P/L),\qp/\zp)$.
Let $f \in \Hom(B,\qp/\zp)^{X_{\mc{S}}}$  determine
$$
	h = \theta_{X_{\mc{S}}}\circ \Tra(f) \in \Hom(X_{\mc{S}} \wedge_\Omega X_{\mc{S}},\qp/\zp)
$$
via the isomorphism \eqref{eq:thetanice}.  We must determine when $h$ has
trivial restriction to  $\Hom(I_{P} \wedge_\Omega I_{P},\qp/\zp)$.
The interpretation of $h$ as a commutator pairing says that this will be the case if and only
if inside the central extension $G_{K,{\mc{S}}}/\ker(f)$ of $X_{\mc{S}} = G_{K,{\mc{S}}}/B$
by $B/\ker(f)$, the inverse image $\tilde{I}_{P}$ in $G_{K,{\mc{S}}}/\ker(f)$ of the image of $I_P$ in $X_{\mc{S}}$ is abelian.  The subgroup $I^\diamond_{P}$ of $\tilde{I}_{P}$ generated by inertia groups of primes over $P$ surjects onto
$I_{P}$.  So since $G_{K,{\mc{S}}}/\ker(f)$ is a central extension of $X_{\mc{S}}$ by
$B/\ker(f)$, the commutators of any two elements of $\tilde{I}_{P}$ will be trivial
if and only if the same is true of $I^\diamond_{P}$. Thus the condition that $h$ has trivial
restriction to  $\Hom(I_{P} \wedge_\Omega I_{P},\qp/\zp)$ is
the same as requiring that $I^\diamond_{P}$ is abelian.
\end{proof}

Define $M_P/L$ to be the maximal subextension of 
$N/L$ such that $M_P/L$ is unramified at all primes of $M_P$ over ${P}$.
One has $M_P \subset N_P$ because the inertia groups in $\Gal(M_P/K)$
at primes over $P$ inject into inertia groups of primes over $P$
in the abelian group $X_{\mc{S}} = \Gal(L/K)$, hence are themselves abelian.  On the other hand, $N_P/L$
need not be unramified at primes over $P$, so $N_P$ may be a nontrivial extension of $M_P$.
The following lemma shows that this makes no difference from the point of view of second Chern classes.

\begin{lemma}
\label{lem:RalphLemma}
The kernel of the surjection $\Gal(N_P/L) \to \Gal(M_P/L)$
is supported in codimension at least $3$.
\end{lemma}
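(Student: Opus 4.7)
The plan is to show that the kernel $\Gal(N_P/M_P)$ actually vanishes, so the codimension $\geq 3$ bound holds vacuously. Note that $\Gal(N_P/M_P)$ is generated as an $\Omega$-module by inertia subgroups in $\Gal(N_P/L)$ at primes $w$ above $v \in P$, so it suffices to show that each such inertia is zero.

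Fix $v \in P$ and a prime $w$ of $N_P$ over $v$. By Lemma \ref{lem:sweepup}(ii), $\widetilde{E}/E$ has infinite residue field degree at $v$, so the residue field of $K_v$ contains the $\zp$-extension of $\F_{p^{f_v}}$ and its absolute Galois group has trivial pro-$p$ part. Hence $K_v$ admits no nontrivial unramified pro-$p$ extension, so $\mf{D}_v = \mf{I}_v$ and the decomposition group at $w$ in $\Gal(N_P/K)$ equals its inertia $\mc{I}_w$. Since $\mc{I}_w$ is abelian by the defining property of $N_P$, it is a quotient of the maximal abelian pro-$p$ local inertia $\mf{I}_v$, giving a chain of surjections
$$
	\mf{I}_v \twoheadrightarrow \mc{I}_w \twoheadrightarrow \mf{I}_v|_{X_\mc{S}}
$$
whose composition is the natural local-to-global restriction map $\mf{I}_v \to X_\mc{S}$, and whose second surjection has kernel equal to the inertia at $w$ in $\Gal(N_P/L)$.

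I would then invoke Lemma \ref{lem:ranklemma} (with $\mc{T} = \mc{T}_i \supseteq \{v\}$) to conclude that $I_{\mc{T}_i}^{\psi} \hookrightarrow X_\mc{S}^{\psi}$ for every $\psi$; since $I_v$ is an $\Omega$-summand of $I_{\mc{T}_i}$ and $\mf{I}_v \hookrightarrow I_v = \Omega\,\hat{\otimes}_{\zp\ps{\mc{G}_v}} \mf{I}_v$ via the canonical inclusion $m \mapsto 1 \otimes m$, the restriction $\mf{I}_v \to X_\mc{S}$ is injective on $\psi$-eigenspaces (where $I_v^{\psi}$ is torsion-free by Lemma \ref{lem:localexts}), and hence on the full $\Omega$-module by faithful flatness of $W$ over $\zp$. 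Injectivity of the composite surjection forces both of the displayed surjections to be isomorphisms, and so the inertia at $w$ in $\Gal(N_P/L)$ vanishes. Summing over every $v \in P$ and every $w | v$ then yields $\Gal(N_P/M_P) = 0$.

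The main subtle point is confirming that $\mc{I}_w$ is genuinely a quotient of $\mf{I}_v$ and that the second surjection really is the local reciprocity map; this relies on the total ramification of $N_P/K$ at primes above $P$ (a consequence of the triviality of the pro-$p$ unramified Galois group of $K_v$) together with the standard abelianization of pro-$p$ local inertia in any pro-$p$ abelian quotient.
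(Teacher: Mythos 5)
Your proof is correct, and it in fact establishes a \emph{stronger} conclusion than the lemma asks for: you show $\Gal(N_P/M_P) = 0$, not merely that it is supported in codimension at least $3$. The key difference from the paper's proof is that you work one prime $w$ at a time at the level of Galois groups, using the chain $\mf{I}_v \twoheadrightarrow \mc{I}_w \twoheadrightarrow (\text{image of } \mf{I}_v \text{ in } X_{\mc{S}})$, and invoke the injectivity statement in Lemma~\ref{lem:ranklemma} to see the composite is injective, forcing the inertia of $\Gal(N_P/L)$ at $w$ to vanish. The paper instead proceeds entirely at the level of $\Omega$-modules: it shows $\Gal(N_P/M_P)$ is a quotient of a certain sum $\bigoplus_{\mf{p} \in P}\Omega \cotimes{\zp\ps{\mc{G}_{\mf{p}}}} H_{\mf{p}}$ contained in $\ker(I_P^\psi \to Y_{\mc{S}}^\psi)$, and controls that kernel by factoring through $I_P^\psi \hookrightarrow (I_P^\psi)^{**}$ and comparing reflexive hulls at codimension-two primes. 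Both proofs ultimately rest on torsion-freeness of $I_P^\psi$ (Proposition~\ref{prop:localseq}, Remark~\ref{EKp}, Lemma~\ref{lem:sweepup}), so they are not independent; but yours replaces the reflexivization argument with Lemma~\ref{lem:ranklemma} and a direct identification of the local decomposition groups, which is shorter and extracts a cleaner conclusion. Your observation also shows the paper's cautionary remark just before the lemma---that ``$N_P$ may be a nontrivial extension of $M_P$''---is overly conservative: in fact $N_P = M_P$.

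One point of sloppiness you should clean up: you write that ``the restriction $\mf{I}_v \to X_{\mc{S}}$ is injective on $\psi$-eigenspaces.'' Since $\mf{I}_v$ is only a $\zp\ps{\mc{G}_v}$-module and not an $\Omega$-module, it does not have $\psi$-eigenspaces in the sense of the paper. The correct sequence of implications is: Lemma~\ref{lem:ranklemma} gives $I_{\mc{T}_i}^\psi \hookrightarrow X_{\mc{S}}^\psi$ for every $p$-adic $\psi$; since $I_v^\psi$ is an $\Omega^\psi$-direct summand of $I_{\mc{T}_i}^\psi$, this gives $I_v^\psi \hookrightarrow X_{\mc{S}}^\psi$ for every $\psi$; the product decomposition $\Omega \cong \prod_{[\psi]} \mc{O}_\psi\ps{\Gamma}$ and the faithful flatness of $W$ over each $\mc{O}_\psi$ (cf.\ Remark~\ref{rem:eigsp}) then give $I_v \hookrightarrow X_{\mc{S}}$; finally the split inclusion $\mf{I}_v \hookrightarrow I_v$, $m \mapsto 1 \otimes m$, gives $\mf{I}_v \hookrightarrow X_{\mc{S}}$. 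The parenthetical appeal to Lemma~\ref{lem:localexts} for torsion-freeness of $I_v^\psi$ is redundant once you cite Lemma~\ref{lem:ranklemma} directly.
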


\begin{proof}
Since $K \subset L \subset M_P \subset N_P \subset N$ and $\Gal(N/K)$
is finitely generated as an $\Omega$-module, the group $\Gal(N_P/M_P)$ is finitely
generated as an $\Omega$-module.  Since $M_P$ is the maximal extension of $L$ in $N$ that is unramified over $P$, 
it is equal to $(N_P)^{J_P}$ for $J_P$ the subgroup of
$\Gal(N_P/L)$ generated by the inertia groups of primes of $N_P$ over $P$.
Thus $\Gal(N_P/M_P)$ is generated as an $\Omega$-module by finitely many inertia 
subgroups $J_{\mf{Q}}$ of $\Gal(N_P/L)$ for primes $\mf{Q}$ over $P$ in $N_P$.

Let $\mf{p} \in P$, and let $\mf{Q}$ be a prime of $N_P$ above $\mf{p}$.  
By Lemma \ref{lem:sweepup}(ii) and the definition of $N_P$,
the completion of $N_P$ at $\mf{Q}$ is contained in the maximal abelian pro-$p$ extension $K_{\mf{p}}^{\ab,(p)}$ of the 
completion $K_{\mf{p}}$ of $K$ at the prime under $\mf{Q}$.   
Since $M_P/L$ is completely split at all primes over $\mf{p}$, the completions of $M_P$ and $L$ at primes
under $\mf{Q}$ are equal.
Thus $J_{\mf{Q}}$ is a quotient of the Galois group 
$H_{\mf{p}}$ of $K_{\mf{p}}^{\ab,(p)}$ over the completion of $L$ at the prime under $\mf{Q}$.  
Since the $J_{\mf{Q}}$ for  $\mf{Q}$ over $\mf{p} \in P$ generate $\Gal(N_P/M_P)$ as an $\Omega$-module, this implies that
$\Gal(N_P/M_P)$ is a quotient of the $\Omega$-submodule of $I_P$ given by
\begin{equation}
\label{eq:blah}
 \bigoplus_{\mf{p} \in P} \Omega \cotimes{\zp\ps{\mc{G}_{\mf{p}}}} H_{\mf{p}}.
\end{equation}

The $\psi$-isotypical component of 
\eqref{eq:blah} is contained in the kernel of the homomorphism $I_{P}^{\psi} \to Y_{\mc{S}}^{\psi}$
since this homomorphism factors through the injection $X_{\mc{S}}^\psi \to Y_{\mc{S}}^\psi$.
By Proposition \ref{prop:localseq}, Remark \ref{EKp} and Lemma \ref{lem:sweepup}, the homomorphism $I_{P}^{\psi} \to (I_{P}^{\psi})^{**} $ is injective.
The localization at codimension two primes of the map $(I_{P}^{\psi})^{**} \to (Y_{\mc{S}}^\psi)^{**}$
is a map between free modules of the same rank which has torsion cokernel and is therefore injective.
Thus, the kernel of $I_{P}^{\psi} \to Y_{\mc{S}}^\psi$ must be supported in codimension at least three. 
\end{proof}

Set $U = \Gal(N/L)$ and $V = \Gal(M/L)$, where $M = M_{\mc{T}_1} \cap M_{\mc{T}_2}$.
We denote by $U^{\sqrt{\psi}}$ (resp. $V^{\sqrt{\psi}}$)
the $\psi$-isotypical component of $U$ (resp. $V$) with respect
to the square root action of the conjugation action described in Remark
\ref{rem:result}.
The following is the main theorem of this section.  It contains  Theorem D of the introduction.

\begin{theorem}  
\label{thm:mainquartic}
With the assumptions and notations of Theorem \ref{thm:unconditional}, there is an isomorphism
$\bigwedge_{\Omega}^2 X_{\mc{S}}^\psi \xrightarrow{\sim} U^{\sqrt{\psi}}$ induced by the commutator pairing on $X_{\mc{S}}$. This yields surjections  
\begin{equation}
\label{eq:wedgitthree}
 \frac{\bigwedge_{\Omega}^2 X_{\mc{S}}^\psi}{\im(\bigwedge_{\Omega}^2  I_{\mc{T}_1}^\psi) + \im(\bigwedge_{\Omega}^2  I_{\mc{T}_2 }^\psi)}
\twoheadrightarrow V^{\sqrt {\psi}} \quad \mbox{and} \quad  \frac{(\bigwedge_{\Omega}^2 X_{\mc{S}}^\psi)_{\tf}}{(\bigwedge_{\Omega}^2  I_{\mc{T}_1}^\psi)_{\tf} + (\bigwedge_{\Omega}^2  I_{\mc{T}_2}^\psi)_{\tf}} \twoheadrightarrow \frac{V^{\sqrt{\psi}}}{\im(\Tor(U^{\sqrt{\psi}}))} 
\end{equation}
whose kernels are supported in codimension at least $3$. (Here, we use ``$\im$'' to denote the not necessarily isomorphic image of a module
under a canonical map.)
Moreover, we have a congruence of second Chern classes
\begin{multline}
\label{eq:c2formulaquarticupdate}
c_2	\left(\frac{\Lambda}{(\mc{L}_{\mc{S}_1,\psi}/\theta,\mc{L}_{\mc{S}_2,\psi}/\theta)} \right)  \equiv
t_2\left( \frac{V^{\sqrt{\psi}}}{\im(\Tor(U^{\sqrt{\psi}}))} \right) \\
+ c_2\left(\frac{\theta}{\theta_0} \cdot \frac{\Lambda}{\Fitt(\rE^2(X_{\mc{S}^c}^{\omega\psi^{-1} })(1))}\right) \bmod \mc{Z}_{\mc{S},\psi}.
\end{multline}
\end{theorem}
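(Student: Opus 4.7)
The plan is to deduce Theorem \ref{thm:mainquartic} by combining the group-theoretic identifications of Section \ref{s:maximal} with the Chern class formula already established in Theorem \ref{thm:unconditional}.

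First, I would establish the isomorphism $\bigwedge^2_{\Omega} X_{\mc{S}}^\psi \xrightarrow{\sim} U^{\sqrt{\psi}}$. Proposition \ref{prop:theresult}(i) gives an isomorphism $X_{\mc{S}} \wedge_{\Omega} X_{\mc{S}} \xrightarrow{\sim} U$ induced by the commutator pairing, and Remark \ref{rem:result} identifies the natural action of $\mc{G}$ on the left with the canonical square root of the conjugation action on $U$. Passing to $\psi$-isotypical components for this square root action and invoking Proposition \ref{prop:theresult}(iii), which identifies the $\psi$-isotypical piece of $X_{\mc{S}} \wedge_{\Omega} X_{\mc{S}}$ with $X_{\mc{S}}^\psi \wedge_{\Omega_W} X_{\mc{S}}^\psi = \bigwedge^2_{\Lambda} X_{\mc{S}}^\psi$, yields the claim.

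For the first surjection in \eqref{eq:wedgitthree}, Proposition \ref{prop:unramified} identifies the image of $I_{\mc{T}_i} \wedge_{\Omega} I_{\mc{T}_i}$ in $X_{\mc{S}} \wedge_{\Omega} X_{\mc{S}} \cong U$ with $\Gal(N/N_{\mc{T}_i})$, so the sum of the two images is $\Gal(N/N_{\mc{T}_1}) + \Gal(N/N_{\mc{T}_2})$. Since $M = M_{\mc{T}_1} \cap M_{\mc{T}_2}$ and each $M_{\mc{T}_i} \subseteq N_{\mc{T}_i}$, one has $\Gal(N/M) = \langle \Gal(N/M_{\mc{T}_1}), \Gal(N/M_{\mc{T}_2}) \rangle$, which contains $\Gal(N/N_{\mc{T}_1}) + \Gal(N/N_{\mc{T}_2})$. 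The quotient $\Gal(N/M)/(\Gal(N/N_{\mc{T}_1}) + \Gal(N/N_{\mc{T}_2}))$ is then a quotient of $\Gal(N_{\mc{T}_1}/M_{\mc{T}_1}) \oplus \Gal(N_{\mc{T}_2}/M_{\mc{T}_2})$, which by Lemma \ref{lem:RalphLemma} is supported in codimension at least $3$. Taking $\psi$-components preserves this codimension bound and yields the first surjection. The second surjection then follows from the first by a snake-lemma argument: modding out on the left by $\Tor(\bigwedge^2 X_{\mc{S}}^\psi) = \Tor(U^{\sqrt{\psi}})$ and on the right by its image in $V^{\sqrt{\psi}}$ gives the claimed map, with the kernel again supported in codimension at least $3$.

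Finally, the Chern class congruence \eqref{eq:c2formulaquarticupdate} would be obtained by applying Theorem \ref{thm:unconditional} with $n = \ell = 2$. The only difference from \eqref{eq:c2formulaquartic} is that the term $t_2\bigl((\bigwedge^2 X_{\mc{S}}^\psi)_{\tf}/((\bigwedge^2 I_{\mc{T}_1}^\psi)_{\tf} + (\bigwedge^2 I_{\mc{T}_2}^\psi)_{\tf})\bigr)$ is replaced by $t_2(V^{\sqrt{\psi}}/\im(\Tor(U^{\sqrt{\psi}})))$; this replacement is valid because the second surjection has kernel supported in codimension at least $3$, so localization at any height two prime is an isomorphism and $t_2$ is preserved. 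The main obstacle I anticipate is the bookkeeping in the second paragraph: simultaneously tracking Lemma \ref{lem:RalphLemma} for both $\mc{T}_1$ and $\mc{T}_2$, handling the passage from sums of inertia images to the generated subgroup $\Gal(N/M)$, and verifying that taking $\psi$-isotypical components and torsion-free quotients both preserve the codimension $\geq 3$ control on all relevant kernels.
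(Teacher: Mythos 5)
Your proposal is correct and follows essentially the same route as the paper: Proposition \ref{prop:theresult} and Remark \ref{rem:result} for the isomorphism with $U^{\sqrt{\psi}}$, Proposition \ref{prop:unramified} to identify the images of $\bigwedge^2_{\Omega}I_{\mc{T}_i}$ with $\Gal(N/N_{\mc{T}_i})$, Lemma \ref{lem:RalphLemma} for the codimension $\ge 3$ control on $\Gal((N_{\mc{T}_1}\cap N_{\mc{T}_2})/(M_{\mc{T}_1}\cap M_{\mc{T}_2}))$, and substitution into Theorem \ref{thm:unconditional}. You merely spell out more explicitly the intermediate step (that the kernel is a quotient of $\Gal(N_{\mc{T}_1}/M_{\mc{T}_1})\oplus\Gal(N_{\mc{T}_2}/M_{\mc{T}_2})$) and the passage to torsion-free quotients, both of which the paper states with less detail.
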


\begin{proof}
The isomorphism $\bigwedge_{\Omega}^2 X_{\mc{S}}^\psi = U^{\sqrt{\psi}}$ results from Proposition \ref{prop:theresult}  and Remark \ref{rem:result}.
By Proposition \ref{prop:unramified}, we have an identification
\begin{equation}
\label{eq:wedgit}
 \frac{\bigwedge^2_{\Omega} X_{\mc{S}}}{\bigwedge^2_{\Omega}  I_{\mc{T}_i}}
\cong \Gal(N_{\mc{T}_i}/L)
\end{equation}
of $\Omega$-modules.
Proposition \ref{prop:theresult} further identifies the $\psi$-isotypical component of the left-hand side of \eqref{eq:wedgit} with $\psi$-isotypical component of the right-hand side for the square root of the conjugation action on $\Gal(N_{\mc{T}_i}/L)$.
From \eqref{eq:wedgit}, we get an isomorphism
\begin{equation}
\label{eq:wedgittwo}
 \frac{\bigwedge_{\Omega}^2 X_{\mc{S}}}{\bigwedge_{\Omega}^2  I_{\mc{T}_1} + \bigwedge_{\Omega}^2  I_{\mc{T}_2 }}
\cong \Gal((N_{\mc{T}_1} \cap N_{\mc{T}_2})/L).
\end{equation}
By Lemma \ref{lem:RalphLemma}, $\Gal\left ((N_{\mc{T}_1} \cap N_{\mc{T}_2})/(M_{\mc{T}_1} \cap M_{\mc{T}_2})\right )$ is supported in codimension at least 3 as a module for $\Omega$ so (\ref{eq:wedgittwo}) gives (\ref{eq:wedgitthree}).    
Substituting these facts into Theorem \ref{thm:unconditional}, we obtain Theorem \ref{thm:mainquartic}.
\end{proof}

\footnotesize
\noindent\textit{Acknowledgments.}
The authors would like to thank G. Pappas for helpful discussions and T. Kataoka for noticing a mistake in an earlier version of the proof of Lemma \ref{lem:sweepup}. They also thank the referees for comments and suggestions which helped to improve the article. F. Bleher was partially supported by NSF FRG Grant No.\ DMS-1360621 and NSF Grant No.\ DMS-1801328. T. Chinburg was partially supported by  NSF FRG Grant No.\ DMS-1360767, NSF SaTC Grants No.\ CNS-1513671/1701785,  and Simons Foundation Grant No.\ 338379. R. Greenberg was partially supported by NSF FRG Grant No.\ DMS-1360902. R. Sharifi was partially supported by NSF FRG Grant No.\ DMS-1360583 and NSF Grant No.\ DMS-1801963.

\vspace{-2ex}

\appendix

\section{Field Diagram}
\label{s:FieldDiagram}
$$\xymatrix{
\ar@{-}[rr]\ar@{-}[dd]_{U}&&N\ar@{-}[d]&&{\begin{minipage}[t]{2.75in} {$N$ = maximal abelian pro-$p$ extension of $L$ unramified outside $\mathcal{S}=\mathcal{S}_1\cup\mathcal{S}_2$ satisfying the conditions of Definition \ref{def:Zdef},}\end{minipage}}
\\
&\ar@{-}[r]\ar@{-}[d]_{V}&M\ar@{-}[d]&&{\begin{minipage}[t]{2.75in} {$M$ = maximal subextension of $N$ containing $L$ such that $M/L$ is unramified outside $\mathcal{S}_1\cap\mathcal{S}_2$,}\end{minipage}}
\\
\ar@{-}[r]\ar@{-}[ddd]_{X_\mathcal{S}}&\ar@{-}[r]&L\ar@{-}[ld]_{I_{\mathcal{T}_1}}\ar@{-}[rd]^{I_{\mathcal{T}_2}}&
&{\begin{minipage}[t]{2.75in} {$L$ = maximal abelian pro-$p$ extension of $K$ unramified outside $\mathcal{S}=\mathcal{S}_1\cup\mathcal{S}_2$,}\end{minipage}}
\\
&L_1\ar@{-}[dd]_{X_{\mc{S}_1}}\ar@{-}[rd]&&L_2\ar@{-}[dd]^{X_{\mathcal{S}_2}}\ar@{-}[ld]
&{\begin{minipage}[t]{2.75in} {$L_i$ = maximal abelian pro $p$-extension of $K$ unramified outside $\mathcal{S}_i$, where 
$\mathcal{S}_1$ and $ \mathcal{S}_2$ are distinct $p$-adic CM types,}\end{minipage}}
\\
&&L_1\cap L_2\ar@{-}[d]_{X_{\mathcal{S}_1\cap \mathcal{S}_2}}\\
\ar@{-}[r]\ar@{-}[dd]_{\Delta\times\Gamma=\mathcal{G}}&\ar@{-}[r]&K=F\tilde{E}\ar@{-}[ld]_{\Gamma}\ar@{-}[rd]^{\Delta}&\ar@{-}[l]&{\begin{minipage}[t]{2.75in} {$\Gamma=\mathrm{Gal}(K/F)\cong \mathrm{Gal}(\tilde{E}/E)\cong \mathbb{Z}_p^r$, $r\ge d+1$,\\ 
$\Delta=\mathrm{Gal}(F/E)\cong \mathrm{Gal}(K/\tilde{E})$,}\end{minipage}}\\
&F\ar@{-}[rd]_{\Delta}&&\tilde{E}\ar@{-}[ld]^{\Gamma}
&{\begin{minipage}[t]{2.75in} {$\tilde{E}=$ compositum of all $\mathbb{Z}_p$-extensions of $E$, \\ 
$F=$ finite abelian extension of $E$ of degree prime to $p$ containing the $p$th roots of unity,}\end{minipage}}
\\
\ar@{-}[rr]&&E\ar@{-}[d]&
&{\begin{minipage}[t]{2.75in} {$E=$ CM field of degree $2d$.}\end{minipage}}
\\
&&\mathbb{Q}&
&
}$$

\section{Notation Index}
\label{s:NoteIndex}

\begin{tabular}[t]{lp{5in}}
$\rT_n(M)$&maximal submodule of $M$ supported in codimension at least $n$\\
$t_n(M)$ & $n$th Chern class of $\rT_n(M)$\\
$c_n(M)$& $n$th Chern class of $\rT_n(M)$ if $M=\rT_n(M)$\\
$E$ &a CM field of degree $2d$\\
$E^+$ &the maximal totally real subfield of $E$\\
$F$ &a finite Galois extension of $E$ of degree prime to $p$ (with varying extra hypotheses)
\end{tabular}
\newpage \noindent
\begin{tabular}[t]{lp{5in}}
$K$&compositum of $F$ with all $\mathbb{Z}_p$-extensions of $E$\\
$\Delta$&$\mathrm{Gal}(F/E)$\\
$\Gamma$&$\mathrm{Gal}(K/F)$\\
$\mathcal{G}$&$\mathrm{Gal}(K/E)$\\
$\chi_p$& the $p$-adic cyclotomic character of $\mc{G}$ with values in $\zp^{\times}$\\
$\omega$&the Teichm\"uller character of $\Delta$\\
$W$&the Witt vectors over $\overline{\F}_p$\\
$\psi$&a character of $\Delta$ valued in $W^{\times}$\\
$\Lambda$&$W\ps{\Gamma}$\\
$\Omega $&$ \zp\ps{\mc{G}}$\\
$A^{\psi}$&the $\psi$-isotypical component of $A \cotimes{\zp} W$ for a compact $\Omega$-module $A$\\
$S_f$& the set of primes of $E$ over $p$\\
$\Sigma$&a subset of $S_f$\\
$X_\Sigma$&the $\Sigma$-ramified Iwasawa module over $K$\\
$\mr{L}_{\Sigma,\psi}$&the power series defining the Katz $p$-adic $L$-function of $\psi$ when $\Sigma$ is a CM type\\
$\mc{L}_{\Sigma,\psi}$&a choice of generator for the characteristic ideal of $X_\Sigma^\psi$ when $\Sigma$ is a CM type\\
$\mc{S}$&a union of ($p$-adic) CM types $\mc{S}_i$\\
$\overline {\mathcal{S}}$&the image under complex conjugation of $\mc{S}$\\
$\mc{S}^c$& the set of primes over $p$ not in $\mc{S}$\\
$\ell$&the $\Lambda$-rank of $X_{\mc{S}}^{\psi}$ (with a related abstract usage in Section \ref{sec:exterior})\\
$A(1)$&the Tate twist of a compact $\Lambda$-module $A$ by the cyclotomic character of $\Gamma$\\
$A^\iota$& a compact $\Lambda$-module $A$ in which the $\Gamma$-action is inverted\\
$\rE^i(A)$&$ \Ext^i_{\Lambda}(A^{\iota},\Lambda)$ for a finitely generated $\Lambda$-module $A$ (also with $\Omega$ in
place of $\Lambda$)\\
$A_{\tf}$&$A/\rT_1(A)$ for a finitely generated module $A$ over an integral domain\\
$\bigwedge^{\ell}A$&the $\ell$th exterior power of a finitely generated module $A$\\
$\Fitt(A)$& the $0$th Fitting ideal of a finitely generated module $A$\\
$V^{\sqrt{\psi}}$&the $\psi$-isotypic component of $V$ for the canonical square root action of $\mathcal{G}$ on $V$\\
$K_{\mathcal{S}}^{(p)}$&the maximal $\mathcal{S}$-ramified
pro-$p$ extension of $K$\\
$S$&the set $S_{p,\infty}$ of all places of $E$ over $p$ and $\infty$\\
$G_{F',S}$ & $\Gal(F'_S/F')$ for $F'_S$ the maximal $S$-ramified extension of $F'/F$\\
$\mathcal{Q}$&$ \mathrm{Gal}(F_S/E)$\\
$\rC_{\Iw}(K,T)$&the Iwasawa cochain complex of  a compact $\zp\ps{\mc{Q}}$-module $T$\\
$\RGa_{\Iw}(K,T)$&the class of  $\rC_{\Iw}(K,T)$  in the derived category \\
$\rH^i_{\Iw}(K,T)$& the $i$th cohomology group of  $\rC_{\Iw}(K,T)$\\
$\rC_{v,\Iw}(K,T)$&the local Iwasawa cochain complex of  $T$ at $v \in S_f$\\
$M^*$&$\rE^0(M)$ for a finitely generated $\Omega$-module $M$\\
$\rC_{\Sigma,\Iw}(K,T)$&$\Cone\left(\rC_{\Iw}(K,T) \to \bigoplus_{v \in \Sigma} \rC_{v,\Iw}(K,T)\right)[-1]$ for $\Sigma \subset S_f$\\
$\rH^i_{\Sigma,\Iw}(K,T)$& the $i$th cohomology group of $\rC_{\Sigma,\Iw}(K,T)$\\
$X^\flat_\Sigma$& the maximal quotient of $X_\Sigma$ that is completely split at the primes in $S_f-\Sigma$\\
$Y_\Sigma$&$ \rH^2_{\Sigma,\Iw}(K,\zp(1))$\\
$\mc{G}_{\mf{p}}$&the decomposition group in $\mc{G}$  at a place over the prime $\mf{p}$ in $K$\\
$\mc{K}_{\mf{p}}$&$ \zp\ps{\mc{G}/\mc{G}_{\mf{p}}}$\\
$\mc{K}_\Sigma$&$ \bigoplus_{\mf{p} \in \Sigma} \mc{K}_{\mf{p}}$\\
$\mc{K}_{\Sigma,0}$&the kernel of the augmentation map $\mc{K}_\Sigma \to \zp$\\
$\Gamma_{\mf{p}}$&the decomposition group $\mc{G}_{\mf{p}}\cap \Gamma$ in $\Gamma$
\end{tabular}
\newpage \noindent
\begin{tabular}[t]{lp{5in}}
$r_{\mf{p}}$&$\rank_{\zp} \Gamma_{\mf{p}}$\\
$\mf{D}_{\mf{p}}$&the Galois group of the maximal abelian pro-$p$ extension of the completion of $K$ over $\mf{p}$\\
$\mf{I}_{\mf{p}}$&the inertia subgroup of $\mf{D}_{\mf{p}}$\\
$D_{\mf{p}}$&$ \Omega \cotimes{\zp\ps{\mc{G}_{\mf{p}}}} \mf{D}_{\mf{p}}$\\
$I_{\mf{p}}$&$ \Omega \cotimes{\zp\ps{\mc{G}_{\mf{p}}}} \mf{I}_{\mf{p}}$\\
$D_\Sigma$&$\bigoplus_{\mf{p} \in \Sigma} D_{\mf{p}}$\\
$I_\Sigma$&$ \bigoplus_{\mf{p} \in \Sigma} I_{\mf{p}}$\\
$d_{\mf{p}}$&$[E_{\mf{p}}:\Q_p]$\\
$Z_{\Sigma}$&$\zp$ if $\Sigma = S_f$ and $r \ge 2$, zero otherwise\\
$d_{\Sigma}$&$\sum_{\mf{p} \in \Sigma} d_{\mf{p}}$\\
$\rQ(M)$&$R/\Fitt(M)$ for a finitely generated module $M$ over an integral domain $R$\\
$\mc{T}$& the union of $\mc{T}_i = \mc{S} - \mc{S}_i$ for the CM types $\mc{S}_i$ with union $\mc{S}$ (except in
Lemma \ref{lem:ranklemma})\\
$\mc{U}_{\mf{p},\psi}$&the set of codimension two primes of $\Lambda$ in the support of $\mc{K}_{\mf{p}}^{\psi}$ \\
$\overline{\mc{U}}_{\mf{p},\psi}$&the set of codimension two primes of $\Lambda$ in the support of $(\mc{K}_{\mf{p}}^{\omega\psi^{-1}})^{\iota}(1)$\\
$\mc{U}_{\Sigma,\psi}$&$\bigcup_{\mf{p} \in \Sigma} \ \mc{U}_{\mf{p},\psi}$\\
$\overline{\mc{U}}_{\Sigma,\psi} $&$\bigcup_{\mf{p} \in \Sigma} \ \overline{\mc{U}}_{\mf{p},\psi}$\\
$\mc{Z}_{\Sigma,\psi}$&the free abelian group on $\mc{V}_{\Sigma,\psi} =  \mc{U}_{\Sigma^c,\psi} \cup \overline{\mc{U}}_{\Sigma,\psi}$\\
$\Pi_i$&the closure of the $i$th term in the lower central series of a profinite group $\Pi$\\
$\Phi$& a subgroup of  the group $\mr{Aut}(\Pi)$ \\
$(\Pi_1/\Pi_2)_{\Phi,\mathrm{s}}$&the maximal quotient of $\Pi_1/\Pi_2$ by a $\Phi$-stable subgroup with self-adjoint commutator
\end{tabular}

\end{document}